\newcommand{\red}{} \newcommand{\blue}{} \newcommand{\green}{} \newcommand{\violet}{} \newcommand{\bred}{} \newcommand{\magenta}{} \newcommand{\cyan}{}
\definecolor{darkgreen}{rgb}{0,0.2,0.2}
\newtheorem{thm}{Theorem}[section]
\newtheorem{prop}[thm]{Proposition}
\newtheorem{lem}[thm]{Lemma}
\theoremstyle{definition}
\newtheorem{dfn}[thm]{Definition}
\newtheorem{exa}[thm]{Example}
\newtheorem{rem}[thm]{Remark}
\def\rank{\mathop{\mathrm{rank}}\nolimits}
\def\Im{\mathop{\mathrm{Im}}\nolimits}
\def\Ker{\mathop{\mathrm{Ker}}\nolimits}
\def\Coker{\mathop{\mathrm{Coker}}\nolimits}
\def\Hom{\mathop{\mathrm{Hom}}\nolimits}
\def\Spec{\mathop{\mathrm{Spec}}\nolimits}
\def\Gal{\mathop{\mathrm{Gal}}\nolimits}
\def\GL{\mathop{\mathrm{GL}}\nolimits}
\def\mod{\mathop{\mathrm{mod}}\nolimits}
\def\Cl{\mathop{\mathrm{Cl}}\nolimits}
\def\lk{\mathop{\mathrm{lk}}\nolimits}
\newcommand{\mf}[1]{{\mathfrak{#1}}}
\newcommand{\bb}[1]{{\mathbb{#1}}}
\newcommand{\mca}[1]{{\mathcal{#1}}}
\newcommand{\LR}{\longleftrightarrow}
\newcommand{\inj}{\hookrightarrow}
\newcommand{\surj}{\twoheadrightarrow}
\newcommand{\act}{\curvearrowright}
\newcommand{\congto}{\overset{\cong}{\to}}
\newcommand{\N}{\bb{N}}
\newcommand{\Z}{\bb{Z}}
\newcommand{\Zp}{\bb{Z}_{p}}
\newcommand{\Q}{\bb{Q}}
\newcommand{\Qp}{\bb{Q}_{p}}
\newcommand{\R}{\bb{R}}
\newcommand{\C}{\bb{C}}
\newcommand{\F}{\bb{F}}
\newcommand{\p}{\mf{p}}
\renewcommand{\O}{\mca{O}}
\renewcommand{\(}{\left\{}
\renewcommand{\)}{\right\}}
\newcommand{\del}{\partial}
\newcommand{\ol}{\overline}
\newcommand{\wt}[1]{{\widetilde{#1}}}
\newcommand{\wh}[1]{{\widehat{#1}}}
\begin{document}

\title[On the Iwasawa invariants for links]{On the Iwasawa invariants for links \\ and Kida's formula} 

\author{Jun UEKI} 

\maketitle 
\begin{abstract}
Analogues of Iwasawa invariants in the context of 3-dimensional topology have been studied by M.~Morishita and others. 
In this paper, following the dictionary of arithmetic topology, 
we formulate an analogue of Kida's formula on $\lambda$-invariants in a $p$-extension of $\Zp$-fields for 3-manifolds. 
The proof is given in a parallel manner to Iwasawa's second proof, with use of $p$-adic representations of a finite group. 
In the course of our arguments, 
we introduce the notion of a branched $\Zp$-cover as an inverse system of cyclic branched $p$-covers of 3-manifolds, 
generalize the Iwasawa type formula, and compute the Tate cohomology of 2-cycles explicitly. 
\end{abstract}



\footnote[0]{keywords: link, rational homology 3-sphere, branched cover, Galois theory, Iwasawa theory, arithmetic topology.}
\footnote[0]{Mathematics Subject Classification 2010: 57M12, 57M25, 57M27, 57M60, 11R23.}

\section{Introduction} 
The analogy between primes and knots is known (\cite{Morishita2012}). 
Among other things, there is close analogy between 
Iwasawa theory and Alexander--Fox theory, and 
topological analogues of Iwasawa's class number formula are studied (\cite{HMM2006}, \cite{KM2008}).  
Let $p$ denote a fixed prime number, and let 
$\Zp$ denote the ring of $p$-adic integers through out this paper.  
We call a field $K$ simply a \emph{$\Zp$-field} if it can be obtained as a $\Zp$-extension of a finite number field $k$.  
If the Iwasawa $\mu$-invariants vanish, 
extensions of $\Zp$-fields \blue{resemble} 
those of function fields, 
and satisfy Kida's formula (\cite{Kida1980}), 
which is an arithmetic analogue of the classical Riemann--Hurwitz formula. 
In this paper, we formulate their topological analogue, 
\blue{in a very parallel manner to } 
Iwasawa's second proof in \cite{Iwasawa1981}, 
\blue{using $p$-adic representation theory of finite groups 
and 
the Tate cohomologies.} 

Here are the contents of this paper. 
In Section 2, we review some basic analogies in branched Galois theories for finite degrees. 
An analogy between unit groups and 2-cycle groups was pointed out in 
our previous paper (\cite{Ueki1}). We calculate the Tate cohomology of 2-cycles  
explicitly. We also discuss analogue objects of $S$-ideal groups and others. 
In Section 3, we review Iwasawa's class number formula of $\Zp$-fields. 
In Section 4, as an analogue of $\Zp$-extension, we introduce the notion of 
a \emph{branched $\Zp$-cover} of rational homology 3-spheres ($\Q$HS$^3$), 
namely, an inverse system of branched cyclic $p$-covers of $\Q$HS$^3$'s, and 
generalize the Iwasawa type formula:

\setcounter{section}{4}
\setcounter{thm}{8} 
\begin{thm} \blue{Let $\wt{M}=\(M_n\)_n$ be a branched $\Zp$-cover consisting of $\Q$HS$^3$'s. Then there are some $\lambda, \mu, \nu\in \Z$ satisfying $\#H_1(M_n, \Zp)=p^{\lambda n+\mu p^n +\nu}$ for any $n\gg 0$.}  
\end{thm}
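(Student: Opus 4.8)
The plan is to reduce the statement to the structure theory of finitely generated torsion modules over the Iwasawa algebra, parallel to Iwasawa's class number formula recalled in Section~3. Let $\Gamma=\Gal(\wt M/M_0)\cong\Zp$, fix a topological generator $\gamma$, and identify $\Lambda:=\Zp[[\Gamma]]$ with the power series ring $\Zp[[T]]$ via $\gamma\mapsto 1+T$. For $n\ge 0$ put $\Gamma_n=\Gamma^{p^n}=\Gal(\wt M/M_n)$ and $\omega_n=(1+T)^{p^n}-1$, a distinguished polynomial of degree $p^n$ with $\Lambda/\omega_n\Lambda\cong\Zp[\Gamma/\Gamma_n]$. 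The deck group $\Gamma/\Gamma_n$ acts on $H_1(M_n,\Zp)$, the covering projections $M_{n+1}\to M_n$ (the analogues of the norm maps on ideal class groups) induce maps $H_1(M_{n+1},\Zp)\to H_1(M_n,\Zp)$, and I would take
\[
X\;:=\;\varprojlim_n H_1(M_n,\Zp),
\]
a compact $\Lambda$-module, namely the first \v{C}ech homology of the branched $\Zp$-cover $\wt M$, as the object to analyse.

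The key point is a control theorem: for $n\gg 0$ the map $X/\omega_n X\to H_1(M_n,\Zp)$ induced by the projection from the inverse limit has finite kernel and cokernel whose orders are eventually independent of $n$. To prove it, write $E_0=M_0\setminus\nu(L)$ for the exterior of the branch locus $L\subset M_0$, let $E_n\to E_0$ be the induced unbranched $\Gamma/\Gamma_n$-cover, and recall that $M_n$ is obtained from $E_n$ by Dehn filling its boundary tori along the meridians of the components of the preimage $L_n$ of $L$. Comparing the towers $\{E_n\}_n$ and $\{M_n\}_n$ by a Mayer--Vietoris argument along these fillings, keeping careful track of those meridians, and invoking the explicit computation of the Tate cohomology of $2$-cycles from Section~2, one identifies the $\Gamma_n$-coinvariants of $X$ with $H_1(M_n,\Zp)$ once $n$ is large. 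This is the step I expect to be the main obstacle: it requires controlling the meridians of the branch components and the discrepancy between the branched and the unbranched covers, which is exactly where ramification intervenes in the arithmetic proof.

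Granting the control theorem, each $X/\omega_n X$ is finite since $M_n$ is a $\Q$HS$^3$; because $\omega_0=T$ divides $\omega_{n_0}$ for any $n_0$ beyond which the control theorem applies, $X/TX$ is a quotient of the finite group $X/\omega_{n_0}X$, so $X/(p,T)X$ is finite and the topological Nakayama lemma shows $X$ is finitely generated over $\Lambda$; finiteness of all $X/\omega_n X$ then forces $X$ to be $\Lambda$-torsion, since otherwise $X\otimes_{\Zp}\Qp$ would have a nonzero free part over the principal ideal domain $\Lambda\otimes_{\Zp}\Qp$ while $(X/\omega_n X)\otimes_{\Zp}\Qp=0$ for every $n$. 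By the structure theorem there is a pseudo-isomorphism
\[
X\ \longrightarrow\ \bigoplus_{i=1}^{s}\Lambda/(p^{a_i})\ \oplus\ \bigoplus_{j=1}^{t}\Lambda/(f_j^{\,b_j})
\]
with the $f_j$ distinguished irreducible polynomials; set $\mu=\sum_i a_i\ge 0$ and $\lambda=\sum_j b_j\deg f_j\ge 0$. One computes $|(\Lambda/p^{a})/\omega_n|=p^{ap^{n}}$ and $|(\Lambda/f^{b})/\omega_n|=p^{\,b(\deg f)n+c}$ for $n\gg 0$, and since $\omega_n$ annihilates any fixed finite $\Lambda$-module once $n\gg 0$, a pseudo-isomorphism of finitely generated torsion $\Lambda$-modules changes the orders $|{\cdot}/\omega_n|$ only by an eventually constant factor; hence $|X/\omega_n X|=p^{\lambda n+\mu p^{n}+\nu'}$ for $n\gg 0$. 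Combining this with the control theorem gives $\#H_1(M_n,\Zp)=p^{\lambda n+\mu p^{n}+\nu}$ for all $n\gg 0$, with $\lambda,\mu\ge 0$ as above and some $\nu\in\Z$.
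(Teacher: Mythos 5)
Your overall architecture --- a control theorem identifying $H_1(M_n,\Zp)$ with the $\Gamma_n$-coinvariants of $X=\varprojlim_m H_1(M_m,\Zp)$ up to bounded error, followed by Nakayama's lemma, torsionness, and the structure theorem --- coincides with the paper's second (``alternative'') proof, in which the control theorem appears as the $p$-adic Sakuma exact sequence $H_1(M_{n_0})_{[p]}\to H_1(M_n)_{[p]}\to \mca{H}/\nu_{p^{n-n_0}}\mca{H}\to 0$ (Proposition \ref{Sakuma}). Everything downstream of the control theorem in your write-up is routine and correct (working with $\omega_n=(1+T)^{p^n}-1$ rather than $\nu_{p^n}$ only changes the constant $\nu$). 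The problem is that the control theorem \emph{is} the mathematical content of the statement, and you do not prove it --- you explicitly defer it as ``the main obstacle.'' Moreover, the route you sketch (a Mayer--Vietoris comparison of $M_n$ with the unbranched cover $E_n$ at level $n$, plus the Tate cohomology of $2$-cycles) cannot by itself identify the coinvariants of the inverse limit with $H_1(M_n,\Zp)$: the inverse limit lives at infinite level, so one must (i) for each finite $m>n$ relate $H_1(X_n)$ to $H_1(X_m)/\nu_{p^n}H_1(X_m)$ via the Hochschild--Serre/Wang sequence for $\pi_1(X_m)\trianglelefteq \pi_1(X_n)$, which produces the error terms $p^{m-n}\Z/p^m\Z$ and the meridian module $\langle\wt{\mu_i}\rangle/\nu_{p^n}$; (ii) kill meridians to pass from $X_m$ to $M_m$; and (iii) take $\varprojlim_m$, checking that the error systems are Mittag--Leffler zero and that the coinvariant functor commutes with $\varprojlim_m$ for surjective systems of profinite $\Lambda$-modules. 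Step (iii) is precisely where the finiteness and eventual constancy of the kernel and cokernel come from, and none of it is present in your sketch. You also omit the reduction to the totally branched case (replacing the base by $M_{n_0}$ with $n_0=\max_i v_p(\tau(\mu_i))$), without which the transfer is not surjective on meridians and even the surjectivity of your map $X/\omega_nX\to H_1(M_n,\Zp)$ is unjustified.

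For comparison, the paper's primary proof of this theorem is shorter and genuinely different: it approximates the defining homomorphism $\tau:H_1(X)\to\Zp$ by homomorphisms to $\Z$ that agree with $\tau$ modulo $p^{n_1}$, observes that the threshold $n_0$ in the Iwasawa-type formula depends only on $H_1(M)$ and the valuations $v_p(\tau(\mu_i))$, and thereby reduces to the known formula for branched $\Z$-covers of Kadokami--Mizusawa. If you want to complete your argument along the lines you chose, you should supply the full proof of the Sakuma-type control theorem as above; otherwise the approximation argument is the cheaper way to close the gap.
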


We also employ the structure theorem of $\Lambda$-modules, and prove a $p$-adic variant of Sakuma exact sequence. 
Then we deduce an alternative proof for the Iwasawa type formula, and also a proposition on the direct limit of homology groups. 
In Section 5, we further introduce an analogous notion of \blue{a Galois} extension of $\Zp$-fields, 
say \blue{an \emph{equivariant Galois morphism} (or a \emph{branched Galois cover})} of branched $\Zp$-covers. 
It is a compatible system of branched covers on each layer 
between two branched $\Zp$-covers. 
In addition, we define chains of branched $\Zp$-covers by 
direct limits with respect to the transfer maps, 
and compute the Tate cohomologies for them. 
%
In Section 6, we review 
Kida's formula for an extension of $\Zp$-fields. 
In Section 7, we establish an analogue of Kida's formula for branched $\Zp$-covers: 
\setcounter{section}{7} 
\setcounter{thm}{0}
\noindent 
\begin{thm}
\emph{Let $f:\wt{N\,}\!\to \wt{M}$ be \blue{an equivariant Galois morphism of degree \red{$p$-power} of }
branched $\Zp$-covers of $\Q$HS$^3$, 
let $\ol{S}$ be the branch link of $f_0:N\to M$,  
\blue{and let $\mca{S}$ denote the inverse limit of the preimages of $\ol{S}$ in $\wt{N\,}\!$.}  
If $\ol{S}$ is \blue{infinitely} inert in $\wt{M}$ and 
$\mu_{\wt{M}}=0$, then $\mu_{\wt{N\,}\!}=0$. 
\bred{If in addition any component of $\ol{S}$ is not inert in $f_0$, then} 
\blue{the Iwasawa $\lambda$-invariants and}  
the branch indices $e_w$ of components $w$ of \bred{$\mca{S}$} satisfy}
$$\lambda_{\wt{N\,}\!}-1=\deg(f)(\lambda_{\wt{M}}-1)+\sum_{w\subset \blue{\mca{S}}}(e_w-1).$$
\end{thm}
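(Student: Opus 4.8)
Put $G=\Gal(f)=\Gal(\wt N/\wt M)$, a finite $p$-group since $\deg(f)$ is a $p$-power, and let $\mf X_{\wt M}=\varprojlim_n H_1(M_n,\Zp)$ and $\mf X_{\wt N}=\varprojlim_n H_1(N_n,\Zp)$ be the Iwasawa modules, the transition maps being those induced by the covering projections; by the discussion of Section~4 these are finitely generated torsion $\Lambda$-modules $(\Lambda=\Zp[[T]])$, and $\mf X_{\wt N}$ carries a compatible $G$-action, so it is a $\Lambda[G]$-module. The plan is to transplant Iwasawa's second proof \cite{Iwasawa1981}: reduce the identity to a computation in the representation ring of $G$ over $\Qp$, the ramification of $f$ entering through the inertia groups of the components of $\mca S$.

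\emph{Step 1: $\mu_{\wt N}=0$.} The $p$-adic Sakuma exact sequence of Section~4, applied to $f$, controls the $G$-coinvariants $(\mf X_{\wt N})_G$ by $\mf X_{\wt M}$ together with the ramification data of $f$ (inertia groups and the cohomology of $G$). As $\ol S$ is infinitely inert, these ramification terms are finitely generated over $\Zp$, and as $\mu_{\wt M}=0$ the module $\mf X_{\wt M}$ is finitely generated over $\Zp$; hence $(\mf X_{\wt N})_G$ is finitely generated over $\Zp$. Consequently $\mf X_{\wt N}/p\mf X_{\wt N}$ has finite $G$-coinvariants, and since $G$ is a $p$-group the augmentation ideal of $\Fp[G]$ is nilpotent, so $\mf X_{\wt N}/p\mf X_{\wt N}$ is itself finite; topological Nakayama over $\Zp$ then shows $\mf X_{\wt N}$ is finitely generated over $\Zp$, i.e.\ $\mu_{\wt N}=0$.

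\emph{Step 2: the $\lambda$-formula.} Assume now in addition that no component of $\ol S$ is inert in $f_0$. Since $\mu=0$ on both sides, $V_{\wt M}:=\mf X_{\wt M}\otimes_{\Zp}\Qp$ and $V_{\wt N}:=\mf X_{\wt N}\otimes_{\Zp}\Qp$ are finite-dimensional $\Qp$-spaces with $\dim_{\Qp}V_{\wt M}=\lambda_{\wt M}$ and $\dim_{\Qp}V_{\wt N}=\lambda_{\wt N}$, and $V_{\wt N}$ is a $\Qp[G]$-module. Tensoring the $p$-adic Sakuma sequence with $\Qp$ and feeding in the Tate-cohomology computations of Sections~2 and~5, one obtains an exact sequence of $\Qp[G]$-modules whose terms are $V_{\wt N}$, a $\Qp[G]$-module rationally isomorphic to $\Qp[G]\otimes_{\Qp}V_{\wt M}$ up to trivial summands, the induced modules $\mathrm{Ind}_{T_w}^{G}\Qp\cong\Qp[G/T_w]$ attached to the ramified components $w\subset\mca S$ (one representative over each component $v$ of $\ol S$), and copies of the trivial representation $\Qp$. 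Here $T_w\le G$ denotes the inertia group of $w$; the hypothesis that $\ol S$ has no inert component forces $T_w$ to equal the full decomposition group, cyclic of order $e_w$, so that $\dim_{\Qp}\Qp[G/T_w]=[G:T_w]=\deg(f)/e_w$ coincides with the number of components of $\mca S$ lying over $v$, whence $\sum_{w\mid v}(e_w-1)=\deg(f)-\deg(f)/e_w$. Taking $\Qp$-dimensions in the exact sequence, the alternating sum collapses and, after grouping the ramified components over each $v\subset\ol S$, yields
\[
\lambda_{\wt N}-1=\deg(f)\,(\lambda_{\wt M}-1)+\sum_{w\subset\mca S}(e_w-1),
\]
the two occurrences of ``$-1$'' being precisely the trivial summands $\Qp$ at the two ends of the sequence.

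The principal obstacle is the middle of Step~2: proving that the inverse limit of Sakuma's exact sequences degenerates, after $\otimes\Qp$, into such a short complex with the trivial-representation corrections identified exactly -- these corrections are the topological counterpart of the unit contribution in Iwasawa's argument, here governed by the 2-cycle groups and their Tate cohomology from Sections~2 and~5 -- and that the complementary term is rationally induced from $V_{\wt M}$, which relies on the $p$-adic representation theory of the $p$-group $G$ (vanishing of Tate cohomology forcing rational freeness over $\Qp[G]$). Checking that the decomposition groups of the components of $\mca S$ behave as claimed under the two inertness hypotheses, and that Sakuma's sequences are compatible with the transition maps so that passing to the limit preserves exactness, are the remaining technical points.
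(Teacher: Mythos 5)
Your overall strategy is the right one, but as written there are two genuine gaps at exactly the points where the content lies. First, the $\mu=0$ step: Sakuma's exact sequence (Section 4) lives in the $\Zp$-direction of a single tower — it relates $H_1(M_n)_{[p]}$ to $\mca{H}/\nu_{p^n}\mca{H}$ — and does not by itself control the $G$-coinvariants of $\varprojlim H_1(N_n,\Zp)$ in terms of $\varprojlim H_1(M_n,\Zp)$ plus ramification data. That control is a relative genus-theory statement, which is where the hypothesis that $\ol{S}$ is finitely decomposed (infinitely inert) actually enters; the paper imports it as Theorem 5.5 from \cite{Ueki3} rather than deriving it from Sakuma. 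Your Nakayama/nilpotent-augmentation-ideal endgame is fine once that input is granted, but the input itself is the missing step.

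Second, and more seriously, the $\lambda$-formula cannot be extracted from a $\Qp[G]$-dimension count alone. After tensoring with $\Qp$ the indecomposable $\Qp[\Z/p\Z]$-modules are only $\pi_1$ and $\pi_{p-1}$, so the rational structure of $V_{\wt{N\,}\!}$ determines only $a_1+a_p$ and $a_{p-1}+a_p$ in the integral decomposition $A\cong A_1^{a_1}\oplus A_{p-1}^{a_{p-1}}\oplus A_p^{a_p}$; Kida's formula is equivalent to $a_{p-1}-a_1=s-1$, and this difference is invisible rationally. The paper pins it down integrally: it works with the \emph{direct} limit $A=\varinjlim H_1(N_n)_{[p]}$ under transfers, identifies $\wh{H}^i(G,A_{\wt{S}})\cong\wh{H}^i(G,Z_2(\wt{N\,}\!)_{\wt{S}})$, and computes the Herbrand quotient $q(Z_2(\wt{N\,}\!)_{\wt{S}})=q(Z_2(\wt{N\,}\!))\,p^s$ together with $\hbar_2-\hbar_1=-1$ (Proposition 5.6), giving $a_{p-1}-a_1=\hbar_2-\hbar_1+s$. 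This Tate-cohomological input is the topological counterpart of the unit computation in Iwasawa's proof, and your proposed four-term exact sequence of $\Qp[G]$-modules is neither constructed nor recoverable from it after $\otimes\,\Qp$. Relatedly, attacking a general $p$-group $G$ in one shot forces you into the (wild) integral representation theory of $G$; the paper sidesteps this by proving only the degree-$p$ case cohomologically and then inducting via a central subgroup of order $p$ \`a la Kida, where the ``no component of $\ol{S}$ is inert in $f_0$'' hypothesis is used to make the ramification indices telescope (the inert case visibly breaks the telescoping, as the parenthetical at the end of the paper's proof shows) — a different, and essential, use of that hypothesis than the one you make.
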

\setcounter{section}{1}


\section{Theories for finite degrees}
To begin with, we briefly review some basic analogies between primes and knots. 
In addition, 
for finite Galois extensions of number fields and finite branched Galois covers of 3-manifolds, 
we study 
analogies between unit groups and 2-cycle groups, $S$-ideals and ``$S$-chains'', and compute their Tate cohomologies 
in an explicit way. 

\subsection{M$^2$KR-dictionary}
The analogy between 3-dimensional topology and number theory was first pointed out by B.~Mazur in the mid 1960's (\cite{Mazur1963}), 
and has been studied systematically 
by M.~Kapranov (\cite{Kapranov1995}), A.~Reznikov (\cite{Reznikov1997}, \cite{Reznikov2000}) and M.~Morishita  (\cite{Morishita2010}, \cite{Morishita2012}). 
In their dictionary of analogies, for example, knots and 3-manifolds correspond to primes and number rings respectively. 
The study of these analogies is christened ``arithmetic topology'' now.
Here is \blue{a basic dictionary} 
we shall use in this paper. 
For a number field $k$, 
$\mca{O}_{k}$ denotes the ring of integers. 

\begin{center}
\begin{tabular}{|c||c|}
\hline 
Number theory&3-dimensional topology\\
\hline 
number ring $\Spec \O_k$ or& (oriented, connected, closed)\\ 
$\ol{\Spec \O_k}=\Spec\O_k\cup\({\rm infinite\ primes}\)$& 3-manifold $M$ \\
\hline 
prime ideal $\p: \Spec \F_\p \inj \Spec \O_k$ & knot $K:S^1 \inj M$ \\
prime ideals $S=\{ \p_1,...,\p_r \}$
 & link $L=\{K_1,...,K_r\} $ \\ 
\hline 
extension $F/k$ & branched cover $f:N\to M$\\ 
\hline 
\'{e}tale fundamental group $\pi_{1}(\Spec \O_k$) & fundamental group $\pi_{1}(M)$\\
$\pi_{1}^{\text{\'et}}(\Spec\O_{k}-S$) & link group $\pi_{1}(M-L)$\\
\hline 
geometric point $\Spec \C \inj \Spec \mca{O}_k$ & base point $\(\rm pt\)  \inj M$\\ 
\hline 
ideal group $I(k)$ & 1-cycle group $Z_{1}(M)$\\
$k^{*} \to I(k);\ a \mapsto (a)$  & $C_{2}(M)\to Z_{1}(M);\ c\mapsto \partial c$ \\ 
principal ideal group $P(k)$& 1-boundary group $B_{1}(M)$\\
\hline 
\end{tabular}
\ \\ 
\begin{tabular}{|c||c|}
\hline 
ideal class group $\Cl(k)=I(k)/P(k)$& 1st-homology $H_{1}(M)=Z_{1}(M)/B_{1}(M)$\\
Fact: $\#\Cl(k)<\infty$ & \blue{Condition:} $\#H_1(M)<\infty$ (i.e. $M$: $\Q$HS$^3$),\\
 & or consider torsion subgroup $H_1(M)_{\rm tor}$\\
\hline
Artin reciprocity & Hurewicz isomorphism\\
$\pi_1^{\text{\'et}}(\ol{\Spec\mca{O}_k})^{\rm ab}\cong \Gal(k^{\rm ur}_{\rm ab}/k) \cong \Cl(k)  $ 
& $\pi_1(M)^{\rm ab}\cong \Gal(M_{\rm ab}/M) \cong H_{1}(M)$\\
$k^{\rm ur}_{\rm ab}/k$ Hilbert class field 
& $M_{\rm ab}\to M$ maximal abelian cover\\ 
\hline
unit group $\O_{k}^{*}$ & 2-cycle group $Z_{2}(M)$,\\
& or 2nd-homology $H_{2}(M)\ (\cong H_1(M)_{\rm free})$\\
\hline
\end{tabular}
\end{center} 

We assume that number fields are finite over $\Q$ and are contained in $\C$, 
3-manifolds are oriented, connected, and closed. 
A branched cover means an isomorphism class of branched covers \blue{with base points} branched over links. 

We have two attitudes about analogues of ideal class groups $\Cl(k)$ and unit groups $\mca{O}_k^*$. 
In the conventional attitude, we do not assume $M$ to be $\Q$HS$^3$, and 
we 
\blue{regard} the torsion subgroups $H_1(M)_{\rm tor}$ as an analogue of $\Cl(k)$. 
Then a non trivial term $H_2(M)\cong H_1(M)_{\rm free}$ plays an analogous role to $\mca{O}_k^*$ (\cite{Sikora2003}, \cite{Morin2008}).  
In another one, which we pointed out in \cite{Ueki1}, we assume $M$ to be a $\Q$HS$^3$, fix a CW-structure, and \blue{regard} 
$Z_2(M)$ as an analogue of $\mca{O}_k^*$. Then, we have the following parallel exact sequences: 

\begin{center}
\begin{tabular}{|c||c|}
\hline
$1\to P(k)\to I(k)\to \Cl(k)\to 1$ & $0\to B_{1}(M)\to C_{1}(M)\to H_{1}(M)\to 0$\\
\hline
$1\to \O_{k}^{*}\to k^{*}\to P(k)\to 1$ & $0\to Z_{2}(M)\to C_{2}(M)\to B_{1}(M)\to 0$\\
\hline
\end{tabular}
\end{center} 

In addition, we have the following theorem. 
\begin{thm}[Ueki \cite{Ueki1}] 
Let $f:N\to M$ be a branched Galois cover of 3-manifolds. 
Let $G=\Gal(h)$, and fix CW-structures compatible with $h$. 
Then the Tate cohomology of $\wh{H}^i(G,Z_2(N))$ 
is independent of the choice of $CW$-structures, 
and is a topological invariant of branched covers. 
\end{thm}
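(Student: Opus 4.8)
The plan is to exploit the permutation-module structure of the cellular chain groups and to reduce $\wh{H}^i(G,Z_2(N))$ to data that is manifestly intrinsic, namely the Tate cohomology of the homology $G$-modules of $N$ together with the local branch data. Fix a CW structure compatible with $h$, that is, a $G$-equivariant structure for which the branch locus $\wt{L}=h^{-1}(L)$ (a link in $N$) is a subcomplex. Then each chain group $C_j(N)$ is a permutation module $\bigoplus_{\sigma}\Z[G/G_\sigma]$, the sum running over orbit representatives of $j$-cells with stabilizers $G_\sigma$. By Shapiro's lemma for Tate cohomology, $\wh{H}^i(G,\Z[G/G_\sigma])\cong \wh{H}^i(G_\sigma,\Z)$, so every cell on which $G$ acts freely contributes nothing. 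Since any nontrivial stabilizer $G_\sigma$ contains an element of prime order whose fixed-point set lies in $\wt{L}$, Smith theory forces the open cell $\sigma^\circ$ to meet $\wt{L}$ whenever $G_\sigma\neq 1$. Thus all nonvanishing contributions to $\wh{H}^*(G,C_j(N))$ come from the finitely many cells lying in any fixed invariant tubular neighborhood of $\wt{L}$.

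Next I would extract from the chain complex the two short exact sequences of $\Z[G]$-modules
$$0\to H_3(N)\to C_3(N)\xrightarrow{\partial_3} B_2(N)\to 0,\qquad 0\to B_2(N)\to Z_2(N)\to H_2(N)\to 0,$$
using $Z_3(N)=H_3(N)=\Z$ (as $N$ is closed, connected and oriented, and $G$ acts by orientation-preserving deck transformations, the action on $H_3$ is trivial) and $B_2(N)=\Im\partial_3$. The associated long exact sequences in Tate cohomology express $\wh{H}^i(G,Z_2(N))$ entirely through $\wh{H}^*(G,H_3(N))$, $\wh{H}^*(G,H_2(N))$ and $\wh{H}^*(G,C_3(N))$. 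The first two are topological invariants of the branched cover, being the Tate cohomology of the $G$-modules $H_3(N)=\Z$ and $H_2(N)\cong \Hom(H_1(N),\Z)$, whose isomorphism types are functorial in the $G$-action on $N$. Hence the entire problem is reduced to controlling the single CW-dependent term $\wh{H}^*(G,C_3(N))$ and the connecting homomorphisms through which it enters.

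This last point is the main obstacle, and the difficulty is genuine: coarsening or refining the cells that run along a branch circle changes both the number and the stabilizers of the non-free $3$-cells (and, correspondingly, of the non-free $2$-cells), so $\wh{H}^*(G,C_3(N))$ is itself not invariant, and the invariance of $\wh{H}^*(G,Z_2(N))$ must emerge from a cancellation. I would establish this by a localization argument. First, reduce any two compatible structures to a common $G$-subdivision. A subdivision supported on the free part of $N$ only introduces free, hence cohomologically trivial, summands, and alters $Z_2(N)$ by a cohomologically trivial extension, so it leaves $\wh{H}^*(G,Z_2(N))$ unchanged; this disposes of everything away from $\wt{L}$. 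It then remains to compare two compatible structures inside an invariant tubular neighborhood of each branch circle, which I would match against the standard local model of a cyclic branched cover, a solid torus with the rotation action $z\mapsto\zeta z$, where the local contribution is computed once and for all in terms of the branch index $e_w$. An excision, or Mayer--Vietoris type, decomposition of $N$ into this neighborhood and its free complement is what organizes the cancellation and pins down the local term; carrying this out rigorously at the level of the cycle module $Z_2(N)$, rather than of homology, is where the real work lies.

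Finally, topological invariance of the resulting value follows formally from functoriality. An isomorphism of branched Galois covers is a $G$-equivariant homeomorphism $N\to N'$; transporting the CW structure along it yields an isomorphism of $\Z[G]$-chain complexes, hence an isomorphism $Z_2(N)\cong Z_2(N')$ of $\Z[G]$-modules and therefore of Tate cohomology. Combined with the CW-independence established above, this shows that $\wh{H}^i(G,Z_2(N))$ is a well-defined invariant of the branched Galois cover.
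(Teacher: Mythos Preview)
The paper does not prove this theorem here---it is quoted from \cite{Ueki1}---but the essential computation is carried out in Proposition~2.3, and comparing with that proof exposes the point your argument misses.

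Your ``main obstacle'' is illusory. A CW (or PL) structure \emph{compatible} with the branched cover means a $G$-CW structure: a cell $\sigma$ with $g\sigma=\sigma$ is fixed by $g$ pointwise, hence its open cell lies in the fixed set of $g$, which is contained in the $1$-dimensional branch locus $\wt L$. Consequently no $2$-cell and no $3$-cell can have nontrivial stabilizer, so $C_2(N)$ and $C_3(N)$ are \emph{free} $\Z[G]$-modules, not merely permutation modules with nontrivial isotropy, and $\wh H^*(G,C_2(N))=\wh H^*(G,C_3(N))=0$. This is exactly the one-line proof of Proposition~2.3(1): ``Since the subset of $G$-fixed points is $1$-dimensional, $C_2(N)$ is $\Z[G]$-free.'' Your assertion that refining cells along a branch circle ``changes both the number and the stabilizers of the non-free $3$-cells'' is therefore wrong: there are no non-free $3$-cells in any compatible structure.

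Once $\wh H^*(G,C_3(N))=0$, your own short exact sequences give $\wh H^i(G,B_2(N))\cong\wh H^{i+1}(G,H_3(N))$ and then the long exact sequence of Proposition~2.3(2), sandwiching $\wh H^i(G,Z_2(N))$ between the intrinsic groups $\wh H^*(G,H_2(N))$ and $\wh H^*(G,H_3(N))$. The localization near $\wt L$, the solid-torus model computation, and the ``cancellation'' you anticipate are all unnecessary; the passage you flag as ``where the real work lies'' simply does not arise. What remains---showing the connecting map in that long exact sequence is independent of the structure---is routine once $C_2$ and $C_3$ are free: two compatible structures are related by a $G$-chain map covering the identity, and dimension-shifting through free modules transports the comparison.
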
 
\blue{We also remark that $\wh{H}^i(G,Z_2(N))$ has more information than $\wh{H}^i(G,H_2(N))$. 
These above enable further translation. 
In our previous paper \cite{Ueki1}, we gave an analogue of Yokoi's formulation of genus theory (\cite{Yokoi1967}).}

The Tate cohomology of the unit group played an important role in Iwasawa's second proof of Kida's formula\blue{. In this article,} 
that of the 2-cycle group 
\blue{plays} a similar role in the proof of our main result. 
\subsection{Computations of Tate cohomologies}  
In this subsection, we \blue{present} some explicit computations of Tate cohomologies \blue{for number fields extensions and branched covers of 3-manifolds.} 
For a group $G$ and a $G$-module $A$, $\wh{H}^i(G,A)$ denotes the Tate cohomology \magenta{for each $i\in \Z$}. We abbreviate $\wh{H}^i(G,A)=\wh{H}^i(A)$ \blue{in the proofs} if there is no ambiguity of $G$. 

\blue{The following facts in number theory are well-known.} 
\begin{prop} Let $F/k$ be a Galois extension of number fields \blue{with $G=\Gal(F/k)$.} 
Then,\\ 
(1) {\rm [Hilbert's Satz 90]} \blue{T}he equality $\wh{H}^{\magenta{1}}(G,F^*)=0$ holds.\\
(2) {\rm (Iwasawa \cite{Iwasawa1956u})} If $F/k$ is unramified and $\Cl(F)=1$, then 
$\wh{H}^i(G,\mca{O}_F^*)\congto \wh{H}^{i-1}(G,C_F)\congto \wh{H}^{i-3}(G,\Z)$ \magenta{for each $i \in \Z$}, 
\blue{where $C_F$ denote the idele class group of $F$.}   
Some cases with restricted ramifications, such as cyclotomic extensions,  are also computable. 
\end{prop}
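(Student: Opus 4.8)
\emph{Part (1).} Since $G$ is finite and the cohomological degree is positive, $\wh{H}^1(G,F^*)=H^1(G,F^*)$, so it suffices to recover the classical Hilbert's Satz 90. I would start from a $1$-cocycle $c\colon G\to F^*$, i.e. $c_{\sigma\tau}=c_\sigma\,\sigma(c_\tau)$, and invoke Dedekind's linear independence of the automorphisms $\{\sigma\}_{\sigma\in G}$ to find $\alpha\in F$ with $\beta:=\sum_{\tau\in G}c_\tau\,\tau(\alpha)\neq 0$. The cocycle relation then gives $\sigma(\beta)=c_\sigma^{-1}\beta$ for every $\sigma$, so $c_\sigma=\beta/\sigma(\beta)$ is a coboundary; hence $H^1(G,F^*)=0$.

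\emph{Part (2).} The plan is to realize both asserted isomorphisms as dimension shifts. Let $J_F$ be the idele group and
$$U_F=\prod_{v\mid\infty}F_v^*\times\prod_{v\ \text{finite}}\mca{O}_v^*$$
its subgroup of unit ideles, fitting into $1\to U_F\to J_F\to I_F\to 1$ with $I_F$ the ideal group. The hypothesis $\Cl(F)=1$ is precisely the statement $J_F=F^*\cdot U_F$; since $U_F\cap F^*=\mca{O}_F^*$, this yields
$$C_F=J_F/F^*\cong U_F/\mca{O}_F^*,\quad\text{i.e.}\quad 1\to\mca{O}_F^*\to U_F\to C_F\to 1$$
as a short exact sequence of $G$-modules.

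The crux is the vanishing $\wh{H}^i(G,U_F)=0$ for all $i$. Grouping the local factors of $U_F$ over the places $w$ of $k$ exhibits $U_F$ as a direct sum of induced modules $\mathrm{Ind}_{G_W}^G(\,\cdot\,)$ for chosen places $W\mid w$, so Shapiro's lemma reduces the claim to $\wh{H}^i(G_W,\mca{O}_W^*)$ at finite $w$ and $\wh{H}^i(G_W,F_W^*)$ at infinite $w$, where $G_W$ is the decomposition group. As $F/k$ is unramified, each finite-place factor is an unramified extension of local fields, whose local units are cohomologically trivial, while at the (unramified) archimedean places $G_W$ is trivial. With $\wh{H}^i(G,U_F)=0$ in hand, the long exact Tate sequence of $1\to\mca{O}_F^*\to U_F\to C_F\to 1$ collapses to connecting isomorphisms $\wh{H}^{i-1}(G,C_F)\congto\wh{H}^i(G,\mca{O}_F^*)$, which is the first arrow.

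For the second arrow I would invoke global class field theory: $C_F$ is a class formation, and cup product with its fundamental class $u\in H^2(G,C_F)$ gives Tate's isomorphisms $\wh{H}^{j-2}(G,\Z)\congto\wh{H}^j(G,C_F)$ for all $j$; taking $j=i-1$ produces $\wh{H}^{i-1}(G,C_F)\cong\wh{H}^{i-3}(G,\Z)$ and completes the chain. The same scheme adapts to a restricted ramification set $S$ by replacing $\mca{O}_F^*$ and $U_F$ with $S$-units and $S$-unit ideles, the local corrections then concentrating at the places in $S$; this is what renders cases such as cyclotomic extensions computable. I expect the main obstacle to be exactly the vanishing $\wh{H}^i(G,U_F)=0$: it is here that unramifiedness is essential, and the archimedean places demand care, since a real place ramifying to a complex one would contribute a nonzero $\wh{H}^0$ and destroy the dimension shift.
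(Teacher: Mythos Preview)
Your argument is correct and follows the standard route: the classical Satz~90 proof via Dedekind independence for (1), and for (2) the dimension shift through the cohomological triviality of $U_F$ (via Shapiro and local unramified theory) combined with Tate's theorem on the class formation $C_F$.

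Note, however, that the paper itself supplies no proof of this proposition: it is stated as background, with part (1) attributed to Hilbert and part (2) to Iwasawa's 1956 paper. There is therefore nothing to compare against beyond observing that your proposal reconstructs the standard argument the citations point to. One small remark: in your exact sequence $1\to\mca{O}_F^*\to U_F\to C_F\to 1$, the surjection $U_F\to C_F$ is what requires $\Cl(F)=1$, as you say, but you should also note that this map is the composite $U_F\hookrightarrow J_F\twoheadrightarrow C_F$, so exactness in the middle (that the kernel is exactly $\mca{O}_F^*=U_F\cap F^*$) is immediate.
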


We have the following on the topology side. 
\begin{prop}
Let $f: N\to M$ be a branched Galois cover of 3-manifolds, 
Put $G=\Gal(f)$, and fix a CW-structures or PL-structures on $M$ and $N$ compatible with $f$. Then \magenta{for each $i\in \Z$},\\ 
(1) 
The equality $\wh{H}^i(G,C_2(N))=0$ holds. \\
(2) There is \blue{a} long exact sequence 
$\cdots \to \wh{H}^{i+1}(G,H_3(N))\to \wh{H}^i(G,Z_2(N))\to \wh{H}^i(G,H_2(N))\to \wh{H}^{i+2}(G,H_3(N))\to \cdots .$ 
\blue{If in addition} $N$ is a $\Q$HS$^3$\blue{,} 
then $\wh{H}^i(G,Z_2(N))\cong \wh{H}^{i+1}(G,H_3(N))$\blue{, where}
$H_3(N)=\langle[N]\rangle\cong \Z$. 
\end{prop}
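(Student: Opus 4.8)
The plan is to reduce both parts to the cohomological triviality of the cellular chain groups in top degrees, and then run the standard Tate cohomology long exact sequences.

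First I would set up the geometry. Write $L\subset M$ for the branch link of $f$ and $\wt{L}=f^{-1}(L)\subset N$, and choose the compatible CW- or PL-structures so that $L$ and $\wt{L}$ are subcomplexes, $f$ is cellular, and $G$ acts cellularly. Because $L$ and $\wt{L}$ are $1$-dimensional, the interior of every $j$-cell of $N$ with $j\ge 2$ is disjoint from $\wt{L}$; away from $\wt{L}$ the map $f$ is an honest covering, so $G$ permutes the $j$-cells of $N$ freely for $j=2,3$. Hence $C_j(N)\cong\Z[G]^{c_j(M)}$ as $\Z[G]$-modules for $j=2,3$, where $c_j(M)$ is the number of $j$-cells of $M$. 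A free $\Z[G]$-module is induced from the trivial subgroup, hence cohomologically trivial by Shapiro's lemma, so $\wh{H}^i(G,C_j(N))=0$ for all $i\in\Z$ and $j=2,3$; in particular this is already part~(1). The independence of all resulting groups from the chosen structure is Theorem~2.2.

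For (2), I would use that $N$ is a closed connected oriented $3$-manifold, so its cellular chain complex vanishes above degree $3$ and $H_3(N)=Z_3(N)=\Ker(\partial_3\colon C_3(N)\to C_2(N))=\langle[N]\rangle\cong\Z$, with $G$ acting trivially since the deck transformations preserve orientation. From the chain complex I extract two short exact sequences of $\Z[G]$-modules,
$$0\to H_3(N)\to C_3(N)\xrightarrow{\ \partial_3\ }B_2(N)\to 0,\qquad 0\to B_2(N)\to Z_2(N)\to H_2(N)\to 0,$$
where $B_2(N)=\Im\partial_3$. Applying the Tate cohomology long exact sequence to the first sequence and using $\wh{H}^i(G,C_3(N))=0$ gives an isomorphism $\wh{H}^i(G,B_2(N))\cong\wh{H}^{i+1}(G,H_3(N))$ for every $i$. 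Substituting this into the long exact sequence of the second short exact sequence,
$$\cdots\to\wh{H}^i(G,B_2(N))\to\wh{H}^i(G,Z_2(N))\to\wh{H}^i(G,H_2(N))\to\wh{H}^{i+1}(G,B_2(N))\to\cdots,$$
produces exactly the asserted long exact sequence.

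Finally, if $N$ is a $\Q$HS$^3$ then $H_1(N)$ is finite, so $H_2(N)\cong H_1(N)_{\rm free}=0$ (compare the dictionary), and the long exact sequence collapses to the isomorphism $\wh{H}^i(G,Z_2(N))\cong\wh{H}^{i+1}(G,H_3(N))$ with $H_3(N)\cong\Z$. I expect the only delicate point to be the claim in the second paragraph that compatible structures can be chosen with $G$ acting freely on the cells of dimension $\ge 2$ — this is precisely where the hypothesis that branch loci are $1$-dimensional enters — after which the argument is a routine diagram chase with Tate cohomology, so I anticipate no genuine obstacle.
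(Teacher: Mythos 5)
Your proposal is correct and follows essentially the same route as the paper: both parts rest on $C_2(N)$ and $C_3(N)$ being $\Z[G]$-free because the branch locus (the $G$-fixed set) is $1$-dimensional, and part (2) is obtained by splicing $0\to Z_3(N)\to C_3(N)\to B_2(N)\to 0$ (with $Z_3(N)=H_3(N)$ since $B_3(N)=0$) into the long exact sequence of $0\to B_2(N)\to Z_2(N)\to H_2(N)\to 0$. The only cosmetic difference is that you fold the identification $Z_3(N)\cong H_3(N)$ directly into the first short exact sequence, whereas the paper records it as a separate step.
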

\begin{proof} 
(1) Since the subset of $G$-fixed points is 1-dimensional, $C_2(N)$ is $\Z[G]$-free. 
(2) \blue{The} exact sequence $0\to B_2(N)\to Z_2(N)\to H_2(N)\to 0$ induces a long exact sequence  
$\cdots \to \wh{H}^i(B_2(N))\to \wh{H}^i(Z_2(N))\to \wh{H}^i(H_2(N))\to \wh{H}^{i+1}(B_2(N))\to \cdots .$ We 
\blue{prove} $\wh{H}^i(B_2(N))\cong \wh{H}^{i+1}(H_3(N))$. 
By the same reason as (1), $C_3(N)$ is a $\Z[G]$-free module, 
and $\wh{H}^i(C_3(N))=0$. Hence 
\blue{the} exact sequence $0\to Z_3(N)\to C_3(N)\to B_2(N)\to 0$ yields 
an isomorphism $\wh{H}^i(B_2(N))\cong \wh{H}^{i+1}(Z_3(N))$. 
In addition, since $N$ is 3-dimensional, we have $B_3(N)=0$. 
Hence \blue{the} exact sequence $0\to B_3(N)\to Z_3(N)\to H_3(N)\to 0$
yields isomorphisms $Z_3(N)\cong H_3(N)=\langle[N]\rangle\cong \Z$. 
Thereby, we obtain $\wh{H}^i(B_2(N))\cong \wh{H}^{i+1}(Z_3(N))\cong \wh{H}^{i+1}(H_3(N))$. 
Especially, if $N$ is a $\Q$HS$^3$, the Poincar\`e duality and the universal coefficient theorem ensure $H_2(N)=0$, and hence 
$\wh{H}^{i+1}(H_3(N))\congto \wh{H}^i(Z_2(N))$. 
\end{proof} 
\subsection{$S$-ideals and $S$-chains} 
In this subsection, we \blue{recall some properties of $S$-ideals and others of number fields \magenta{and $\Zp$-fields}. 
Then we introduce their analogues for 3-manifolds,} 
and obtain further results of Tate cohomologies.  

Let $F/k$ be a finite Galois extension of number fields \magenta{or $\Zp$-fields}. 
\magenta{Let} $S$ be a finite set of primes in \violet{$k$, and \magenta{let $S$} also \magenta{denote} the set of primes above $S$ in $F$}. 
Then \magenta{the $S$-ideal group $I_{F,S}$, the principal $S$-ideal group $P_{F,S}$, the $S$-ideal class group $\Cl_{F,S}$, and the $S$-unit group $\mca{O}^*_{F,S}$} 
are defined, and there are exact sequences of $G=\Gal(F/k)$-modules (\cite{Brumer1966}, \cite{Iwasawa1981}, \cite{NSW}). 

\begin{center}
$0\to P_{F,S}\to I_{F,S}\to \Cl_{F,S}\to 0$\\
$0\to \mca{O}^*_{F,S}\to F^* \to P_{F,S}\to 0$
\end{center}

For a prime $\p$ in $k$, let $I_{F,\p}$ denote the group of ideals of $F$ over $\p$, and let $Z_{\p}$ denote the decomposition group of $\p$. Then we have isomorphisms 
$\wh{H}^i(G,I_{\p})\cong \wh{H}^i(G,\Z[G/Z_{\p}])\cong \wh{H}^i(Z_{\p},\Z)$. 
The direct sum decomposition $I_F=\oplus_{\p} I_{F,\p}$ yields 
$\wh{H}^i(G,I_F)=\oplus_{\p} \wh{H}^i(G,I_{F,\p})\cong \oplus_{\p} \wh{H}^i(Z_{\p},\Z)$. 

\begin{prop}[Iwasawa \magenta{\cite{Iwasawa1981}}] 
Let $F/k$ be an extension of degree $p$ unramified at infinite primes, \blue{put $G=\Gal(F/k)$,} and let $S$ \blue{denote} 
the set of ramified non-$p$ primes. Then, $S$ is a finite set. \\
(1) The Tate cohomology $\wh{H}^i(G,I_{F,S})=\oplus_{\p \not\in S}\wh{H}^i(G,I_{\p})$ \blue{of $S$-ideals} \magenta{($i \in \Z)$} vanishes if and only if no ideals outside $S$ inert. (If $k$ is a number field, then it does not vanish.)\\ 
\blue{(2) \magenta{Let $I_S$ denote the subgroup of $I(F)$ generated by elements of $S$. Then the}
$S$-unit group satisfies $\mca{O}^*_{F,S}/\mca{O}^*_F\cong \magenta{P(F)}\cap I_S$ for a subgroup $\magenta{P(F)}\cap I_S<I_S\cong \Z^{\#S}$ of finite index, and 
$\wh{H}^1(G, \mca{O}^*_{F,S}/\mca{O}^*_F)=0, \wh{H}^2(G, \mca{O}^*_{F,S}/\mca{O}^*_F) \cong (\Z/p\Z)^{\#S}$.} 
\end{prop}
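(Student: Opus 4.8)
The plan is to reduce both parts to the Tate cohomology of the cyclic group $G\cong\Z/p\Z$ acting on lattices built from the primes in, respectively outside, $S$. The finiteness of $S$ I would dispose of first by standard ramification theory: over a number field it is the finiteness of the discriminant of $F/k$, and over a $\Zp$-field it reduces to a finite layer, as in \cite{Iwasawa1981}.

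For (1), I would start from the decomposition $I_{F,S}=\bigoplus_{\p\notin S}I_{F,\p}$ and the Shapiro isomorphisms $\wh{H}^i(G,I_{F,\p})\congto\wh{H}^i(Z_\p,\Z)$ recorded just above, so that $\wh{H}^i(G,I_{F,S})\cong\bigoplus_{\p\notin S}\wh{H}^i(Z_\p,\Z)$. Since $G$ has prime order, each decomposition group $Z_\p$ is either trivial or all of $G$; in the first case every Tate group vanishes, and in the second, periodicity of the cohomology of a cyclic group gives $\wh{H}^i(G,\Z)\cong\Z/p\Z$ for $i$ even and $0$ for $i$ odd. Hence $\wh{H}^i(G,I_{F,S})$ vanishes for all $i$ exactly when no prime outside $S$ has full decomposition group, i.e.\ no ideal outside $S$ is inert (the remaining primes outside $S$ being split or lying over $p$). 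For the parenthetical remark I would invoke the Chebotarev density theorem: over a number field there are infinitely many $\p$ with $\mathrm{Frob}_\p$ a generator of $G$, hence infinitely many inert primes, and since $S$ is finite one such $\p$ lies outside $S$ and contributes a summand $\Z/p\Z$ to $\wh{H}^0(G,I_{F,S})$.

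For (2), the first step is the divisor map $\mathrm{div}\colon\mca{O}^*_{F,S}\to I_S$, $\alpha\mapsto(\alpha)$, whose kernel is $\mca{O}^*_F$ and whose image is $P(F)\cap I_S$; its cokernel injects into $\Cl(F)$ and, being finitely generated and torsion, is finite, so $P(F)\cap I_S$ has finite index in $I_S\cong\Z^{\#S}$. The structural key — and the step I expect to require the most care — is that a ramified prime of $k$ is \emph{totally} ramified in the degree-$p$ extension $F/k$, so that each prime of $S$ in $F$ is the unique prime above its base and is fixed by $G$; therefore $G$ acts trivially on $I_S\cong\Z^{\#S}$. Setting $L:=\mca{O}^*_{F,S}/\mca{O}^*_F\cong P(F)\cap I_S$, I would note that $L$ is torsion-free (the roots of unity of $F$ are $S$-units) and that $L\otimes\Q\congto I_S\otimes\Q$ is $\Q[G]$-trivial, so $L^G$ has full rank in $L$; consequently $L^G$ is free of rank $\#S$ and the norm $N_G$ is injective on $L$, which already yields $\wh{H}^1(G,L)\cong\wh{H}^{-1}(G,L)=0$. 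Finally, to pin down $\wh{H}^2(G,L)\cong\wh{H}^0(G,L)=L^G/N_GL$, I would compute the Herbrand quotient: by multiplicativity and its triviality on finite modules, $h_G(L)=h_G(I_S)=h_G(\Z)^{\#S}=p^{\#S}$, so $\#\wh{H}^0(G,L)=p^{\#S}$; and since $pL^G=N_GL^G\subseteq N_GL\subseteq L^G$, this group is a quotient of $L^G/pL^G\cong(\Z/p\Z)^{\#S}$ of the same cardinality, hence equal to it, and by periodicity so is $\wh{H}^2(G,L)$. The conceptual content is thin — everything is cyclic-group cohomology together with the Herbrand quotient — so the only genuine risk is mis-tracking the $G$-module structures, above all the triviality of the action on $I_S$ and the full rank of $L^G$.
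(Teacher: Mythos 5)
The paper does not actually prove this proposition --- it is quoted from Iwasawa's 1981 paper with only a citation --- so there is no in-text argument to compare against; judged on its own, your proof is correct and is the standard one. Part (1) is exactly Shapiro plus the periodicity of $\wh{H}^*(\Z/p\Z,\Z)$, and the Chebotarev argument for the parenthetical remark is right. In part (2) the divisor-map identification $\mca{O}^*_{F,S}/\mca{O}^*_F\cong P(F)\cap I_S$ and the finite-index claim (via the injection of the cokernel into the torsion group $\Cl(F)$, which for a $\Zp$-field is a direct limit of finite groups) are correct, and you correctly isolate the key structural fact that each prime of $S$ is totally ramified in the degree-$p$ extension, hence $G$-fixed, so $G$ acts trivially on $I_S$. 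Having established that, though, your Herbrand-quotient detour is unnecessary: $L=P(F)\cap I_S$ is a subgroup of the trivial $G$-module $I_S\cong\Z^{\#S}$ of finite index, hence is itself $\Z^{\#S}$ with trivial action, and one reads off $\wh{H}^1(G,L)=\Hom(G,\Z^{\#S})=0$ and $\wh{H}^2(G,L)\cong\wh{H}^0(G,L)=L/pL\cong(\Z/p\Z)^{\#S}$ directly.

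One point deserves a flag, and it is a defect of the statement rather than of your argument: what your computation actually shows is that $\wh{H}^i(G,I_{F,S})$ vanishes iff no prime outside $S$ has full decomposition group. Since $S$ consists only of the ramified \emph{non-$p$} primes, a ramified prime above $p$ lies outside $S$ and also has $Z_\p=G$, so it obstructs vanishing without being inert; the stated ``iff no ideals outside $S$ inert'' is therefore only literally correct if ``inert'' is read as ``nonsplit'' or if ramified $p$-primes are excluded. Your parenthetical remark shows you noticed the issue; it would be worth saying explicitly that the clean criterion is total splitness of all primes outside $S$.
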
 

We consider analogues of these above. 
Let $f:N \to M$ be a finite Galois branched cover of 3-manifolds,
put $G=\Gal(f)$ and 
fix CW-structures or PL-structures compatible with $f$. 
Let $\ol{S}\subset M$ be a link \violet{and suppose that its each component is in or} \blue{outside the branch link of $f$}. 
Note that analogues of $S$-ideals are \emph{not} the ones obtained from $C_*(N,S):=\Coker(C_*(S)\inj C_*(N))$. 
Instead, we consider the following commutative diagram, whose rows are exact. 
$$
\xymatrix{
0 \ar[r] & B_1(S)=0 \ar[r] \ar[d] & Z_1(S) \ar^{\cong}[r] \ar^{\iota_*}[d] & H_1(S) \ar[r] \ar^{\iota_*}[d] & 0\\
0 \ar[r] & B_1(N) \ar[r] & Z_1(N) \ar[r] & H_1(N) \ar[r] & 0
} $$
\magenta{We identify $Z_1(S)$ and $\iota_*(Z_1(S))$.}
We 
put $Z_1(N)_S:=Z_1(N)/\magenta{Z_1(S)}$, $H_1(N)_S:=H_1(N)/\iota_*(H_1(S))$, 
and $B_1(N)_S:= B_1(N)/(B_1(N)\cap Z_1(S))$. 
\red{In addition,} for the boundary map $\del:C_2(N)\to C_1(N)$, 
we put $Z_2(N)_S:=\del^{-1}(Z_1(S))$. \blue{Then we have $Z_2(N)_S\cong Z_2(N,S)$.}
\blue{Thus} we obtain exact sequences of $G$-modules parallel to the case of number theory. 
\begin{center}
$0\to B_1(N)_S \to Z_1(N)_S\to H_1(N)_S\to 0$\\
$0\to Z_2(N)_S \to C_2(N)\to B_1(N)_S\to 0$
\end{center} 

\blue{The} analogue of the $S$-ideal group satisfies the following proposition. 

\begin{prop} \blue{Let $f:N\to M$ be a finite Galois branched cover with $G=\Gal(f)$, let $\ol{S}\subset M$ be a link, 
and put $S=f^{-1}(\ol{S})$.} \\ 
{\rm (i)} If $S$ is not empty, then there is an isomorphism $\wh{H}^i(G,Z_1(N)_S)\cong \wh{H}^{i-1}(\blue{G,}\Ker(\iota_*:H_0(S)\to H_0(N)))$ \magenta{for each $i\in \Z$}. 
\blue{If in addition} $f$ is of degree $p$ and \blue{$\ol{S}$ is the branch link with $s$ components,} 
then 
$\wh{H}^0(G,Z_1(N)_S)=0$ and $\wh{H}^1(G,Z_1(N)_S)\cong  (\Z/p\Z)^{s-1}$.\\ 
{\rm (ii)} If $S$ is empty so that $f$ is unbranched, then  
$\wh{H}^i(G,Z_1(N))\cong \wh{H}^i(\blue{G},H_0(N)))\cong \wh{H}^i(\blue{G},\Z)$ \magenta{for each $i\in \Z$}. 
If \blue{in addition} $\deg(f)=p$,  
then $\wh{H}^0(G,Z_1(N))\cong \Z/p\Z,$ $\wh{H}^1(G,Z_1(N))=0$. 
\end{prop}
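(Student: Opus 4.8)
The plan is to mimic the structure of Iwasawa's computation for $S$-ideals (Proposition~2.7) using the exact sequences of $G$-modules we have just set up, together with the Tate cohomology computation for $C_2(N)$ and $Z_2(N)$ from Proposition~2.5. The two parts are essentially the analogues of the two halves of Iwasawa's argument: (i) is the analogue of the statement $\wh{H}^i(G,I_{F,S})\cong\oplus_{\p\not\in S}\wh{H}^i(Z_\p,\Z)$ together with the identification of the cokernel contribution coming from the decomposition groups, and (ii) is the ``$S$ empty'' degeneration, which is just $\wh{H}^i(G,Z_1(N))\cong\wh{H}^i(G,\Z)$.

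For part~(ii) I would argue as follows. From the exact sequence $0\to B_1(N)\to Z_1(N)\to H_1(N)\to 0$ and the parallel sequence $0\to Z_2(N)\to C_2(N)\to B_1(N)\to 0$, together with $\wh{H}^i(G,C_2(N))=0$ (Proposition~2.5(1)), one gets $\wh{H}^i(G,B_1(N))\cong\wh{H}^{i+1}(G,Z_2(N))$. Now I need to relate $\wh{H}^*(G,Z_1(N))$ to $\wh{H}^*(G,\Z)$. The natural route: $Z_1(N)$ sits in $0\to B_1(N)\to Z_1(N)\to H_1(N)\to 0$, but the cleaner approach is to use the long exact sequence of the pair/augmentation and Poincar\'e duality in the $\Q$HS$^3$ case to identify the relevant cohomology with that of $H_0(N)\cong\Z$. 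Concretely, I would use the short exact sequence $0\to Z_0(N)\to C_0(N)\to B_{-1}=0$ is trivial, so instead I use $0\to Z_1(N)\to C_1(N)\to B_0(N)\to 0$ and $0\to B_0(N)\to Z_0(N)\to H_0(N)\to 0$ with $C_1(N)$ being $\Z[G]$-free away from the fixed locus — but the fixed locus is $1$-dimensional, so $C_1(N)$ is \emph{not} free in general. This is the first subtlety: I would handle it by using instead the chain complex of $N$ relative to the branch locus, or by the dimension shift through $C_2,C_3$ as in Proposition~2.5, combined with $Z_2(N)\cong\wh{H}^{i+1}(G,H_3(N))\cong\wh{H}^{i+1}(G,\Z)$. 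The upshot in the unbranched case ($S=\emptyset$, $f$ a genuine covering of degree $p$) is $\wh{H}^i(G,Z_1(N))\cong\wh{H}^i(G,H_0(N))\cong\wh{H}^i(G,\Z)$, and since $G\cong\Z/p\Z$ this gives $\wh{H}^0\cong\Z/p\Z$ and $\wh{H}^1=0$ by the standard Tate cohomology of a cyclic group acting trivially on $\Z$.

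For part~(i), I would start from the commutative ladder displayed just above the proposition, which shows $Z_1(N)_S:=Z_1(N)/Z_1(S)$. Applying the snake lemma / long exact Tate cohomology sequence to $0\to Z_1(S)\to Z_1(N)\to Z_1(N)_S\to 0$, and noting $Z_1(S)\cong H_1(S)$ is, componentwise, an induced module $\Z[G/Z_w]$ over the decomposition groups $Z_w$ of the components $w$ of $S$ (each $S^1$ component being a $G$-orbit of circles), one gets $\wh{H}^i(G,Z_1(S))\cong\oplus_w\wh{H}^i(Z_w,\Z)$. Then combining with the computation of $\wh{H}^i(G,Z_1(N))\cong\wh{H}^i(G,H_0(N))$ from (ii)-type reasoning — which still applies because removing a $1$-dimensional locus doesn't change that $C_2,C_3$ are free — the long exact sequence collapses to $\wh{H}^i(G,Z_1(N)_S)\cong\wh{H}^{i-1}(G,\mathrm{Coker})$ where the cokernel is exactly $\Ker(\iota_*\colon H_0(S)\to H_0(N))$; the degree shift and the appearance of $H_0(S)$ rather than $H_1(S)$ come from chasing the ladder (the $H_1(S)\cong Z_1(S)$ identification feeds into an $H_0$ term after the boundary map in the pair sequence). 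For the ``in addition'' clause: if $f$ has degree $p$ and $\ol S$ is the full branch link with $s$ components, then since $\ol S$ is branched, each component's decomposition group $Z_w$ equals all of $G\cong\Z/p\Z$, so $H_0(S)\cong\Z^s$ with $G$ acting trivially and $H_0(N)\cong\Z$; hence $\Ker(\iota_*)\cong\Z^{s-1}$ with trivial $G$-action, giving $\wh{H}^0(G,\Ker\iota_*)=0$ (the norm map $\Z^{s-1}\xrightarrow{\times p}\Z^{s-1}$) and $\wh{H}^{-1}(G,\Ker\iota_*)\cong(\Z/p\Z)^{s-1}$, which by the index shift yields $\wh{H}^0(G,Z_1(N)_S)=0$ and $\wh{H}^1(G,Z_1(N)_S)\cong(\Z/p\Z)^{s-1}$, using $2$-periodicity of cyclic Tate cohomology.

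The main obstacle I anticipate is the non-freeness of $C_1(N)$ over $\Z[G]$ caused by the $1$-dimensional fixed (branch) locus: the clean dimension-shifting that makes Proposition~2.5 work for $C_2,C_3$ fails one degree lower, so I cannot simply write $\wh{H}^i(G,Z_1(N))\cong\wh{H}^{i+1}(G,B_0(N))$ and push down to $\Z$. I expect to route around this either by passing to the relative chain complex $C_*(N,S^{(0)})$ (subtracting the $0$-skeleton of the branch locus, which \emph{is} a free orbit after suitably choosing the CW-structure), or by directly comparing the equivariant cellular chain complex of $N$ with that of $M$ via the transfer and using Shapiro's lemma for the strata. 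Getting the degree shifts and the precise identification of the cokernel as $\Ker(\iota_*\colon H_0(S)\to H_0(N))$ — rather than some other subquotient — is the bookkeeping-heavy part; everything after that is the standard $2$-periodic Tate cohomology of $\Z/p\Z$ acting on free abelian groups with known actions.
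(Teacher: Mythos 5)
Your part (ii) is essentially fine and matches the paper's argument: when $S=\emptyset$ the $G$-action on $N$ is free, so $C_1(N)$ and $Z_0(N)=C_0(N)$ are $\Z[G]$-free, and the two dimension shifts $\wh{H}^i(G,Z_1(N))\cong \wh{H}^{i-1}(G,B_0(N))\cong \wh{H}^{i-2}(G,H_0(N))$ give the claim; the detour through $C_2,C_3$ and $H_3(N)$ is irrelevant here, and the ``fixed locus'' obstacle you worry about does not arise in this case by hypothesis. The genuine gap is in part (i). Your main route is the long exact Tate sequence of $0\to Z_1(S)\to Z_1(N)\to Z_1(N)_S\to 0$, fed by a computation of $\wh{H}^i(G,Z_1(N))$ ``as in (ii)''; but that computation needs $C_1(N)$ and $Z_0(N)$ to be $\Z[G]$-free, which is exactly what fails when $S$ is nonempty (cells lying on the branch locus have nontrivial stabilizers), and the freeness of $C_2,C_3$ that you invoke is irrelevant in degree $1$. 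Neither of your proposed workarounds closes this: the $0$-skeleton of the branch locus is \emph{not} a free $G$-orbit (its cells are fixed by the inertia groups), and the transfer/Shapiro comparison is left unexecuted. Moreover, even with both outer terms computed, a three-term long exact sequence does not by itself collapse to the asserted isomorphism $\wh{H}^i(G,Z_1(N)_S)\cong\wh{H}^{i-1}(G,\Ker(\iota_*))$ without controlling the connecting maps.

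The paper's device, which you come close to with the phrase ``relative chain complex'', is to quotient by the \emph{full} subcomplex of $S$ and never compute $\wh{H}^i(G,Z_1(N))$ at all: the snake lemma applied to the rows $0\to Z_1(\,\cdot\,)\to C_1(\,\cdot\,)\to B_0(\,\cdot\,)\to 0$ for $S$ and for $N$ gives $0\to Z_1(N)_S\to C_1(N)/C_1(S)\to B_0(N)/B_0(S)\to 0$, and $C_1(N)/C_1(S)$ \emph{is} $\Z[G]$-free because every cell with nontrivial stabilizer lies in $S$; likewise $0\to \Ker(\iota_*)\to B_0(N)/B_0(S)\to Z_0(N)/Z_0(S)\to 0$ with $Z_0(N)/Z_0(S)$ free. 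These two sequences yield $\wh{H}^i(G,Z_1(N)_S)\cong \wh{H}^{i-1}(G,B_0(N)/B_0(S))\cong \wh{H}^{i-1}(G,\Ker(\iota_*))$ cleanly. Finally, your numerical evaluation at the end of (i) swaps $\wh{H}^0$ and $\wh{H}^{-1}$ of the trivial module $\Z^{s-1}$: for $G\cong\Z/p\Z$ acting trivially one has $\wh{H}^{-1}(G,\Z)=0$ and $\wh{H}^0(G,\Z)\cong\Z/p\Z$, so the two slips happen to cancel to the right answer, but the bookkeeping as written is wrong.
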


\begin{proof} If $S$ is not empty, then 
\blue{the} exact sequences 
$0\to Z_1(S)\to C_1(S)\to B_0(S)\to 0$,  
$0\to Z_1(N)\to C_1(N)\to B_0(N)\to 0$ 
and the snake lemma yield an exact sequence 
$0\to Z_1(N)_S\to C_1(N)/C_1(S) \to B_0(N)/B_0(S)\to 0$. 
Here, $C_1(N)/C_1(S)$ is a $\Z[G]$-free module. %
In addition, \blue{the} exact sequences 
$0\to B_0(S)\to Z_0(S)\to H_0(S)\to 0$, 
$0\to B_0(N)\to Z_0(N)\to H_0(N)\to 0$ 
and the snake lemma yield an exact sequence 
$0\to \Ker(\iota_*:H_0(S)\to H_0(N))\to B_0(N)/B_0(S)\to  Z_0(N)/Z_0(S) \to 0$. 
Here, $Z_0(N)/Z_0(S)$ is a $\Z[G]$-free module. 
As a consequence, isomorphisms 
 $\wh{H}^i(Z_1(N)_S)\cong \wh{H}^{i-1}(B_0(N)/B_0(S))
\cong \wh{H}^{i-1}(\Ker(\iota_*:H_0(S)\to H_0(N))$ are obtained. 
Especially, if $\deg(f)=p$ and $\ol{S}$ is properly branched, then 
$\Ker(\iota_*:H_0(S)\to H_0(N))\cong \Z^{s-1}$. 

If $S$ is empty, we consider \blue{the} exact sequences  
$0\to Z_1(N)\to C_1(N)\to B_0(N)\to 0$ and 
$0\to B_0(N)\to Z_0(N)\to H_0(N)\to 0$, 
in which $C_1(N)$ and $Z_0(N)$ are $\Z[G]$-free modules, and $H_0(N)\cong \Z$. Then, isomorphisms 
$\wh{H}^i(Z_1(N)_S)=\wh{H}^i(Z_1(N))\cong \wh{H}^{i-1}(B_0(N))\cong \wh{H}^{i-2}(H_0(N))\cong \wh{H}^i(\Z)$ are obtained. \end{proof}

If we fix a $\Z$-basis of $Z_1(M)$ and \blue{regard} 
it as an analogue of the set of primes in the base field, then we obtain a direct sum decomposition of $Z_1(N)$ as $G$-modules. 

\blue{The analogue of the $S$-unit group satisfies the following.}  
\begin{prop}
\blue{Let $f:N\to M$ be a Galois branched cover of degree $p$ with $G=\Gal(f)$, 
let $\ol{S}$ be the branch link with $s$-components, 
and put $S=f^{-1}(\ol{S})$. Suppose that $S$ consists of $\Q$-null-homologous components. (This assumption holds if $N$ is a $\Q$HS$^3$.) Then $S$-2-cycles satisfy $Z_2(N)_S/Z_2(N)\cong B_1(N)\cap Z_1(S)$ for a subgroup $B_1(N)\cap Z_1(S)<Z_1(S)\cong \Z^s$ of finite index, and 
$\wh{H}^1(G,Z_2(N)_S/Z_2(N))=0,$ $\wh{H}^2(G,Z_2(N)_S/Z_2(N))\cong (\Z/p\Z)^s$.} 
\end{prop}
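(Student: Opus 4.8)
The plan is to carry out the topological analogue of Iwasawa's computation of $\wh H^i(G,\mca O^*_{F,S}/\mca O^*_F)$ recalled above; the relevant entry of the dictionary is that a non-$p$ prime totally ramified in $F/k$ corresponds to a component of the branch link of the degree-$p$ cover $f$, whose preimage in $N$ is a single circle mapped homeomorphically onto it. First I would produce the isomorphism $Z_2(N)_S/Z_2(N)\cong B_1(N)\cap Z_1(S)$ of $\Z[G]$-modules: since $0\in Z_1(S)$ we have $Z_2(N)=\del^{-1}(0)\subset\del^{-1}(Z_1(S))=Z_2(N)_S$, and $\del$ restricts to a $G$-equivariant homomorphism $Z_2(N)_S\to Z_1(S)$ with kernel $Z_2(N)$ and image $\del(C_2(N))\cap Z_1(S)=B_1(N)\cap Z_1(S)$; equivalently, one applies the snake lemma to $0\to Z_2(N)\to C_2(N)\to B_1(N)\to 0$, $0\to Z_2(N)_S\to C_2(N)\to B_1(N)_S\to 0$ and $0\to B_1(N)\cap Z_1(S)\to B_1(N)\to B_1(N)_S\to 0$.

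Next I would analyse $A:=B_1(N)\cap Z_1(S)$ as a $\Z[G]$-module. As $S$ is a disjoint union of $s$ circles, $B_1(S)=0$, so $Z_1(S)=H_1(S)\cong\Z^s$; the subgroup of $H_1(N)$ generated by the classes $[S_1],\dots,[S_s]$ is finitely generated and, by the $\Q$-null-homologous hypothesis, is torsion, hence finite, and it is the image of $Z_1(S)$ under $Z_1(S)\inj Z_1(N)\surj H_1(N)$, whose kernel is $A$; thus $Z_1(S)/A$ is finite, i.e.\ $A<Z_1(S)\cong\Z^s$ has finite index and $A$ is free abelian of rank $s$. Moreover $G$ acts trivially on $Z_1(S)$: since $\ol S$ is the \emph{whole} branch link of the degree-$p$ Galois cover $f$, $f$ is totally ramified over every component of $\ol S$, so each such component has a single preimage and $f$ restricts to a homeomorphism $S\to\ol S$; as $f\circ g=f$ on $S$ for every $g\in G=\Gal(f)$, injectivity of $f|_S$ forces $G$ to act trivially on $S$, hence on $Z_1(S)$ and on $A$ (the counterpart of $I_\p\cong\Z[G/Z_\p]=\Z$ being the trivial module for a totally ramified prime). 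So $A$ is the trivial $\Z[G]$-module $\Z^s$.

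It then remains to compute the Tate cohomology of the trivial $\Z[G]$-module $A\cong\Z^s$ over the cyclic group $G$ of order $p$: the norm element acts as multiplication by $p$ and the augmentation ideal annihilates $A$, so $\wh H^{-1}(G,A)=\Ker(p\colon A\to A)=0$ and $\wh H^{0}(G,A)=A/pA\cong(\Z/p\Z)^s$, whence by the $2$-periodicity of the Tate cohomology of a cyclic group $\wh H^1(G,Z_2(N)_S/Z_2(N))\cong\wh H^1(G,A)=0$ and $\wh H^2(G,Z_2(N)_S/Z_2(N))\cong\wh H^2(G,A)\cong(\Z/p\Z)^s$. The computation is essentially routine; the one point deserving care is the triviality of the $G$-action on $Z_1(S)$ — i.e.\ that total ramification forces $S\to\ol S$ to be a homeomorphism — which rests on the local model of a degree-$p$ cyclic branched cover along its branch locus (a single preimage circle restricting to a degree-$1$ cover of the branch component) and is exactly the topological shadow of the decomposition group of a totally ramified prime being the full Galois group.
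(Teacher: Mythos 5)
Your proposal is correct and follows essentially the same route as the paper: the boundary map on $Z_2(N)_S=\del^{-1}(Z_1(S))$ gives $Z_2(N)_S/Z_2(N)\cong B_1(N)\cap Z_1(S)$, the $\Q$-null-homologous hypothesis gives finite index in $Z_1(S)\cong\Z^s$, and triviality of the $G$-action on $Z_1(S)$ (because total ramification in a degree-$p$ cover forces $f|_S$ to be a homeomorphism onto $\ol S$) reduces everything to the Tate cohomology of the trivial module $\Z^s$. The paper's own proof is just a terser version of exactly this argument (it even leaves the final cohomology computation implicit, and its phrase ``$G$ acts on $B_1(S)$ trivially'' is evidently a slip for $Z_1(S)$, which you state and justify correctly).
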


\begin{proof} 
\blue{Note that $Z_2(N)_S\cong Z_2(N,S)$. 
There is a natural exact sequence $0\to Z_2(N)\to Z_2(N,S)\overset{\del}{\to} Z_1(S)$. 
Since every component of $S$ is $\Q$-null-homologous, $\del Z_2(N,S)=B_1(N)\cap Z_1(S)< Z_1(S)$ is a subgroup of finite index. Since $S$ is the preimage of the branch link, $G$ acts on $B_1(S)$ trivially.}   
\end{proof}

\blue{These computations above will be extended for $\Zp$-covers in \magenta{Subsection} 5.4.} 


\section{Iwasawa's class number formula of $\Zp$-extensions and $\Lambda$-modules}
It has been known that 
there is close analogy between Iwasawa theory and Alexander--Fox theory  (\cite{Mazur1963}, \cite{Morishita2012}). 
For an inverse system of branched cyclic $p$-covers of $\Q$HS$^3$ obtained from a $\Z$-cover, 
an analogue of Iwasawa's class number formula is formulated  (\cite{HMM2006}, \cite{KM2008}, \cite{KM2013}). 
In this paper, 
we generalize this formula for an inverse system 
which is not necessarily obtained from a $\Z$-cover. 

\begin{center}
\begin{tabular}{|c||c|}
\hline Iwasawa theory& Alexander--Fox theory\\
\hline
\hline
$\Zp$-extension of $k$ & $\Z$-cover over $M - L$, or\\
&branched $\Zp$-cover over $(M,L)$\\
\hline
Iwasawa module & link module \\
Iwasawa polynomial & Alexander polynomial\\
\hline
\end{tabular}
\end{center}
In this section, we recall algebraic lemmas on $\Lambda$-modules,  
a proof of Iwasawa's class number formula, 
and an assertion on some direct limit module. 
Their analogues will be discussed in the next section. 

Let $\Lambda=\Zp[[T]]$ denote the ring of formal power series. 
For $\Lambda$-modules $M$ and $M'$, a \emph{pseudo isomorphism} $M\sim M'$ is a homomorphism with finite kernel and cokernel. 
For finitely generated compact $\Lambda$-modules, pseudo isomorphisms give an equivalence relation. 
There is a structure theorem on compact $\Lambda$-modules. 

\begin{lem}[Whasington \cite{Washington}, Chapter 13]
\label{Lambda} Let $E$ be a compact $\Lambda$-module. \\
(1) {\rm (Nakayama's lemma)} $E$ is a finitely generated $\Lambda$-module if and only if $E/(p,T)$ is a finite group.\\
(2) Let $E$ be a finitely generated $\Lambda$-module. Then, there is a pseudo isomorphism 
$$E\sim \Lambda^{\oplus r}\oplus (\oplus_i \Lambda/(f_i^{e_i}))\oplus (\oplus_j\Lambda/(p^{m_j}))$$
to a unique normal form. 
Here, $r,e_i, m_j \in \N$, and $f_i\in \Zp[T]$ is a irreducible Weierstrass polynomial, namely, the coefficient of its highest term is 1 and others are multiples of $p$. \\ 
(3) Let $\mu=\sum m_j, \lambda=\sum e_i \deg (f_i)$ in the normal form above, and put $\nu_{p^n}:=(t^{p^n}-1)/(t-1)=\sum_{0\leq i<p^n}t^i$. 
If $E/\nu_{p^n}E$ is finite for all $n$, then $r=0$, and there is some $\nu,n_0$ such that for any $n>n_0$,  
$$E/\nu_{p^n}E=p^{\lambda n+ \mu p^n +\nu}.$$
(4) In this situation, $\mu=0$ if and only if the $p$-rank of $E/\nu_{p^n}E$ is bounded. 
\end{lem}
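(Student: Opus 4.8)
The plan is to recover this as the classical structure theory for $\Lambda$-modules, organised around the fact that $\Lambda=\Zp[[T]]$ is a complete Noetherian regular local domain of Krull dimension $2$, hence a UFD, with maximal ideal $\m=(p,T)$, residue field $\Fp$, and --- by Weierstrass preparation --- a factorisation of every nonzero element as $p^{a}U(T)P(T)$ with $U$ a unit and $P$ distinguished (monic, all lower coefficients in $p\Zp$); thus the height-one primes of $\Lambda$ are precisely the principal ideals $(p)$ and $(f)$ with $f$ irreducible distinguished. Part (1) is then topological Nakayama: if $E$ is finitely generated, $E/\m E$ is a finite $\Fp$-vector space; conversely a profinite $\Lambda$-module satisfies $\bigcap_{n}\m^{n}E=0$, so lifts of an $\Fp$-basis of $E/\m E$ generate a dense and hence, by compactness, all of $E$.

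For part (2) I would first isolate the free rank: $F:=E/E_{\mathrm{tors}}$ is torsion-free, embeds into its bidual $F^{**}$ with cokernel supported in codimension $\ge 2$ and hence finite (as $\dim\Lambda=2$), and $F^{**}$ is reflexive, so of depth $\ge 2$ and therefore (Auslander--Buchsbaum over the regular ring $\Lambda$) of projective dimension $0$, i.e.\ free of some rank $r$; thus $F\sim\Lambda^{\oplus r}$. The torsion part $E_{\mathrm{tors}}$ has support meeting only finitely many height-one primes $\p_{i}$, and localising at each gives a finite-length module over the discrete valuation ring $\Lambda_{\p_{i}}$ with a well-defined multiset of elementary divisors $\p_{i}^{\,n_{ik}}$; the standard patching over a Krull domain then yields a pseudo-isomorphism $E_{\mathrm{tors}}\sim\bigoplus_{i,k}\Lambda/(\p_{i}^{\,n_{ik}})$, and writing $\p_{i}=(p)$ or $(f_{i})$ produces the summands $\Lambda/(p^{m_{j}})$ and $\Lambda/(f_{i}^{e_{i}})$. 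Recombining the extension $0\to E_{\mathrm{tors}}\to E\to F\to 0$ modulo finite modules (using $\Ext^{1}_{\Lambda}(\Lambda^{\oplus r},-)=0$ and finiteness of the $\Ext$ groups that control the pseudo-null error) puts $E$ itself into the displayed normal form; uniqueness follows because localisation at $\p_{i}$ is exact and annihilates finite modules, so it reads off $\{n_{ik}\}$ as an invariant of the pseudo-isomorphism class.

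For parts (3) and (4) the key input is that $\nu_{p^{n}}$, read in the variable $T$, is distinguished of degree $p^{n}-1$ (indeed $\nu_{p^{n}}\equiv T^{p^{n}-1}\pmod p$), so $\Lambda/(\nu_{p^{n}})\cong\Zp^{\,p^{n}-1}$ as a $\Zp$-module; hence $\Lambda^{\oplus r}/\nu_{p^{n}}\Lambda^{\oplus r}$ is infinite and the hypothesis that all $E/\nu_{p^{n}}E$ are finite forces $r=0$. For the normal form $E'=\bigoplus_{i}\Lambda/(f_{i}^{e_{i}})\oplus\bigoplus_{j}\Lambda/(p^{m_{j}})$ one computes termwise: $\#\,\Lambda/(p^{m},\nu_{p^{n}})=\#(\Z/p^{m})^{p^{n}-1}=p^{m(p^{n}-1)}$, while $\#\,\Lambda/(f^{e},\nu_{p^{n}})=p^{v}$ with $v=v_{p}\bigl(\mathrm{Res}(f^{e},\nu_{p^{n}})\bigr)=e\sum_{f(\alpha)=0}v_{p}\bigl(\nu_{p^{n}}(\alpha)\bigr)$; since every root $\alpha$ of a distinguished polynomial has $v_{p}(\alpha-1)>0$, one shows $v_{p}\bigl(\nu_{p^{n}}(\alpha)\bigr)=v_{p}(\alpha^{p^{n}}-1)-v_{p}(\alpha-1)$ is linear in $n$ for $n\gg 0$ (the roots of unity among the $\alpha$ cannot occur, since then $f\mid\nu_{p^{n}}$ for all large $n$, contradicting finiteness of $E/\nu_{p^{n}}E$). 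Summing gives $\#(E'/\nu_{p^{n}}E')=p^{\lambda n+\mu p^{n}+\nu_{0}}$ for $n\gg 0$, with $\lambda=\sum_{i}e_{i}\deg f_{i}$ and $\mu=\sum_{j}m_{j}$. Transferring to $E$: a pseudo-isomorphism fits into two short exact sequences with finite kernel and cokernel, and since $\nu_{p^{n}}\to 0$ in the $\m$-adic topology, for $n\gg 0$ multiplication by $\nu_{p^{n}}$ annihilates those finite modules while remaining injective on $E'$, so the snake lemma forces $\#(E/\nu_{p^{n}}E)$ and $\#(E'/\nu_{p^{n}}E')$ to differ by a fixed power of $p$; this yields the asserted formula with $\nu$ adjusted by that exponent. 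Part (4) drops out of the same picture: $\dim_{\Fp}\Lambda/(f^{e},\nu_{p^{n}},p)$ stabilises at $e\deg f$ (bounded) whereas $\dim_{\Fp}\Lambda/(p^{m},\nu_{p^{n}},p)=p^{n}-1\to\infty$, and a pseudo-isomorphism changes the $p$-rank only boundedly, so the $p$-rank of $E/\nu_{p^{n}}E$ is bounded if and only if $\mu=0$.

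The step I expect to be the main obstacle is the sharp asymptotic in (3): producing an honest additive constant $\nu$ rather than a mere $O(1)$ bound requires the valuation estimate $v_{p}\bigl(\nu_{p^{n}}(\alpha)\bigr)=n+O(1)$ together with its case analysis according to the size of $v_{p}(\alpha-1)$ relative to $1/(p-1)$, and it requires checking that the correction coming from the pseudo-isomorphism genuinely stabilises --- this is where $\nu_{p^{n}}\to 0$ in $\Lambda$ enters --- rather than merely staying bounded. By comparison, part (1), the torsion-free/free splitting in (2), and part (4) are routine once the structural facts about $\Lambda$ are in place.
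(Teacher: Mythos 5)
The paper does not actually prove this lemma --- it is quoted from Washington, Ch.~13 with no argument given --- so the only meaningful comparison is with the standard treatment in that reference. Your sketch is correct and covers all four parts: topological Nakayama for (1); the structure theorem for (2); the observation that $\nu_{p^n}\equiv T^{p^n-1}\pmod p$ is distinguished, the termwise count $\#\Lambda/(p^m,\nu_{p^n})=p^{m(p^n-1)}$ and $\#\Lambda/(f^e,\nu_{p^n})=p^{e\sum_\alpha v_p(\nu_{p^n}(\alpha))}$ with $v_p(\nu_{p^n}(\alpha))=n+O(1)$ eventually exactly linear, and the transfer across a pseudo-isomorphism for (3); and the mod-$p$ rank computation for (4). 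You also correctly flag the two genuinely delicate points: excluding $p$-power roots of unity among the roots of the $f_i$ (which is forced by finiteness of $E/\nu_{p^n}E$), and upgrading the pseudo-isomorphism correction from $O(1)$ to an eventually constant exponent, for which the fact that $\nu_{p^n}\to 0$ $\m$-adically (so that it eventually annihilates the finite kernel and cokernel) is exactly the right input --- this is Washington's Lemma 13.21. The one place you depart from the cited source is part (2): Washington establishes the normal form by an explicit matrix-reduction/elementary-divisor argument over $\Lambda$ using Weierstrass division, whereas you use the reflexive-module route ($F\hookrightarrow F^{**}$ with finite cokernel, reflexive $\Rightarrow$ free over the $2$-dimensional regular local ring, plus localization at height-one primes for the torsion part). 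Both are standard; yours is shorter to state but leans on more commutative algebra, and the recombination of $E_{\mathrm{tors}}$ with $\Lambda^{\oplus r}$ up to pseudo-isomorphism is the one step you leave compressed. Nothing in your outline would fail.
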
 

Next, we review number theory. 
A \emph{$\Zp$-field}\footnote{A \emph{cyclotomic $\Zp$-field} and a cyclotomic $\Zp$-field which includes $p$-power-th roots of unity in this paper are a \emph{$\Zp$-field} and a \emph{cyclotomic $\Zp$-field} in the sense of Iwasawa in \cite{Iwasawa1981} respectively.} is a field obtained as a $\Zp$-extension of a \magenta{number field}. 
\magenta{If $k_\infty/k$ is a $\Zp$-extension of a number field, then $k_\infty$ is the direct limit (union) of $\Z/p^n\Z$-extensions $k_n/k$.} 
We assume that $\Zp$-fields are also contained in $\C$. A famous formula is stated as follows: 

\begin{thm}[Iwasawa's class number formula \cite{Iwasawa1959}] \label{ICF}
Let $\blue{k_\infty}/k$ be a $\Zp$-\blue{extension} of a finite number field. 
For each \magenta{$n\in \N=\N\cup \{0\}$}, 
let $k_n/k$ denote the subextension of degree $p^n$, and 
let $\Cl(k_n)_{[p]}$ 
\blue{denote} the 
\blue{$p$-}ideal class group.
Then there are some $\lambda,\mu \in \N$, 
$\nu \in \Z$, and $n_0\blue{\in \N}$ such that for any $n>n_0$, 
$$\#\Cl(k_n)_{[p]}=p^{\lambda n+\mu p^n +\nu}.$$ 
\end{thm}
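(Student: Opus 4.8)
The plan is to attach to the tower $\{k_n\}$ the associated \emph{Iwasawa module} $X$ over $\Lambda=\Zp[[T]]$, prove that $X$ is a finitely generated $\Lambda$-module, relate the orders $\#\Cl(k_n)_{[p]}$ to those of the quotients $X/\nu_{p^n}X$, and then read off the asymptotics directly from the structure theorem, Lemma \ref{Lambda}(3). Set $\Gamma=\Gal(k_\infty/k)\cong\Zp$, fix a topological generator $\gamma$, and put $\Gamma_n=\Gamma^{p^n}=\Gal(k_\infty/k_n)$. Identifying the completed group algebra $\Zp[[\Gamma]]$ with $\Lambda$ via $\gamma\mapsto 1+T=:t$, the element $\nu_{p^n}=(t^{p^n}-1)/(t-1)$ is exactly the one appearing in Lemma \ref{Lambda}. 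Writing $A_n=\Cl(k_n)_{[p]}$, Artin reciprocity gives $A_n\cong\Gal(L_n/k_n)$ with $L_n$ the $p$-Hilbert class field of $k_n$; letting $L_\infty$ be the maximal unramified abelian pro-$p$ extension of $k_\infty$, one forms $X=\Gal(L_\infty/k_\infty)=\varprojlim A_n$, which carries a $\Lambda$-module structure via conjugation by lifts of $\gamma$.

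The first and main step is a control (genus-theoretic) argument. I would use that only finitely many primes of $k$ ramify in $k_\infty/k$, namely those above $p$, and that after replacing $k$ by a suitable $k_{n_0}$ (which alters only finitely many layers and is therefore harmless for the asymptotics) all of them are totally ramified in $k_\infty/k$. Fixing inertia generators $\sigma_1,\dots,\sigma_s$ for these primes inside $\Gal(L_\infty/k)$ and using that $L_\infty/k_\infty$ is unramified, a decomposition argument produces a finitely generated $\Lambda$-submodule $Y_0\subseteq X$ (generated by $TX$ together with the differences $\sigma_i\sigma_1^{-1}$) such that $X/Y_0\cong A_0$ is finite, and more generally such that $A_n\cong X/\nu_{p^n}Y_0$ for all $n\gg 0$ (cf. \cite{Washington}, \cite{Iwasawa1959}). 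In particular $X/(p,T)X$ is finite, so by Nakayama's lemma (Lemma \ref{Lambda}(1)) $X$ is a finitely generated $\Lambda$-module, whence $Y_0$ is finitely generated as well.

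For the second step I pass from $A_n$ to a quotient of a single fixed $\Lambda$-module. The tautological exact sequence
\[
0\to Y_0/\nu_{p^n}Y_0\to X/\nu_{p^n}Y_0\to X/Y_0\to 0
\]
combined with the control isomorphism yields $\#A_n=\#A_0\cdot\#(Y_0/\nu_{p^n}Y_0)$; in particular each $Y_0/\nu_{p^n}Y_0$ is finite since $A_n$ is. I then apply Lemma \ref{Lambda}(3) to the finitely generated $\Lambda$-module $Y_0$: finiteness of $Y_0/\nu_{p^n}Y_0$ for every $n$ forces the free rank to vanish, so $Y_0$ (hence $X$) is $\Lambda$-torsion, and gives $\lambda,\mu\in\N$, $\nu'\in\Z$, $n_1$ with $\#(Y_0/\nu_{p^n}Y_0)=p^{\lambda n+\mu p^n+\nu'}$ for $n>n_1$. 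Absorbing the constant $\#A_0=p^{v_0}$ into the lower-order term gives $\#\Cl(k_n)_{[p]}=\#A_n=p^{\lambda n+\mu p^n+\nu}$ with $\nu=\nu'+v_0$, for all sufficiently large $n$; undoing the shift $k\rightsquigarrow k_{n_0}$ only readjusts the constant $\nu$ and the threshold.

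The hard part is the control step, i.e.\ establishing $A_n\cong X/\nu_{p^n}Y_0$ uniformly for large $n$. This is precisely where the arithmetic enters: finiteness of the ramified set, eventual total ramification of the primes above $p$, and the bookkeeping of inertia subgroups inside $\Gal(L_\infty/k)$ (together with the comparison between $L_n$, $L_\infty$, and $k_\infty$). Everything downstream — finite generation, $\Lambda$-torsionness, and the exact exponent $\lambda n+\mu p^n+\nu$ — is then formal, being a direct consequence of Lemma \ref{Lambda} applied to $Y_0$.
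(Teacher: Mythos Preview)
Your proof is correct and follows essentially the same route as the paper's: form the Iwasawa module $X=\varprojlim A_n$ as a $\Lambda$-module, reduce to the totally ramified case by shifting the base to some $k_{n_0}$, invoke the control isomorphism $A_n\cong X/\nu_{p^n}Y_0$ for a finite-index submodule $Y_0$ (the paper's $\mathcal{H}'$, quoting Washington's Proposition~13.22), deduce finite generation and $\Lambda$-torsionness via Nakayama, and then read off the exponent from Lemma~\ref{Lambda}(3). The paper merely cites these ingredients and says the result ``follows immediately,'' whereas you spell out the bookkeeping with the constant $\#A_0$, but the argument is the same.
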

These $\lambda, \mu,$ \magenta{and} $\nu$ are called the \emph{Iwasawa invariants}, 
\magenta{and denoted as $\lambda_{k_\infty/k}, \mu_{k_\infty/k}$, and $\nu_{k_\infty/k}$.
The value of $\lambda$ and whether $\mu=0$ or not depend only on the $\Zp$-field $k_\infty$, and are independent of the base field $k$. Hence they are sometimes expressed as $\lambda_{k_\infty}$ and $\mu_{k_\infty}=0$.}
Let $\Q_\infty$ denote the unique $\Zp$-extension over $\Q$. 
For a number field $k$, \blue{\emph{the cyclotomic $\Zp$-extension} of $k$ is defined \magenta{by} $k_\infty^{\rm cyc}:=k\Q_\infty$, and the Iwasawa invariants of $k$ are defined by those of $k_\infty^{\rm cyc}/k$.} 

\blue{
In the following, we review a proof of this formula with use of Lemma 3.1 on $\Lambda$-modules. 
Let $k_\infty/k$ and $k_n$ be as above, let $t$ be a topological generator of  $\Gamma:=\Gal(k_\infty/k)\cong \Zp$, and fix an identification $\Lambda=\Z[[T]]\cong \Zp[[\Gamma]]; 1+T \LR t$. The group $\Gamma$ acts on the inverse system $\(\Cl(k_n)_{[p]}\)_n$ and hence \emph{the Iwasawa module} $\mca{H}:=\varprojlim \Cl(k_n)_{[p]}$ continuously, and this action makes $\mca{H}$ a compact $\Lambda$-module. The $p$-class groups $\Cl(k_n)_{[p]}$ can be expressed as a quotient of $\mca{H}$:} 


\begin{prop}[Washington \cite{Washington}, Proposition 13.22 and Chapter 13.3] \label{classnumber}
Let $\blue{k_\infty}/k$ be a $\Zp$-extension with $n$-th subfield $k_n$, and put $\mca{H}\blue{=}\varprojlim \Cl(k_n)_{[p]}$.\\
(1) When precisely one prime is ramified in $k_\infty/k$ and it is totally ramified, 
for each $n$, (i) there is an isomorphism $$\Cl(k_n)_{[p]}\cong \mca{H}/(t^{p^n}-1)\mca{H},$$ 
and (ii) $p|\#\Cl(k)$ if and only if $p|\#\Cl(k_n)$. \\
(2) More generally, if every ramified prime in $k_\infty/k$ is totally ramified, 
then there is a subgroup $\mca{H}'<\mca{H}$ of finite index such that 
$\Cl(k_n)_{[p]}\cong \mca{H}/\nu_{p^n}\mca{H}'$. 
\end{prop}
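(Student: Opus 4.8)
The plan is to translate the statement into Galois theory via class field theory and then run Iwasawa's classical computation with the inertia subgroups of the ramified primes.

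First I would replace $\mca{H}$ by its field-theoretic avatar. Let $L_n$ denote the maximal unramified abelian $p$-extension of $k_n$ (the $p$-part of the Hilbert class field), so that Artin reciprocity gives $\Gal(L_n/k_n)\cong\Cl(k_n)_{[p]}$ compatibly with the norm maps on the left and the restriction maps on the right. Put $L_\infty=\bigcup_n L_n$; using that every ramified prime is totally ramified one checks that $L_\infty$ is exactly the maximal unramified abelian pro-$p$ extension of $k_\infty$, whence an identification of compact $\Lambda$-modules $\mca{H}=\varprojlim\Cl(k_n)_{[p]}\cong X:=\Gal(L_\infty/k_\infty)$, the $\Gamma$-action being conjugation. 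The problem is thus reduced to expressing $\Gal(L_n/k_n)\cong\Gal(L_nk_\infty/k_\infty)$ as an explicit quotient of $X$.

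Next I would analyze $G:=\Gal(L_\infty/k)$, which fits in $1\to X\to G\to\Gamma\to1$. For each prime $\mathfrak p_1,\dots,\mathfrak p_s$ of $k$ ramifying in $k_\infty/k$ fix a prime of $L_\infty$ above it with inertia group $I_i\subset G$; since $L_\infty/k_\infty$ is unramified we get $I_i\cap X=1$, and since $\mathfrak p_i$ is totally ramified $I_i$ maps isomorphically onto $\Gamma$. Let $\sigma=\sigma_1\in I_1$ be the lift of a chosen topological generator $t$ of $\Gamma$ and write $\sigma_i=a_i\sigma$ with $a_i\in X$, $a_1=1$. Since $\Gal(L_\infty/k_n)$ is topologically generated by $X$ and $\sigma^{p^n}$, its closed commutator subgroup is $\omega_nX$ with $\omega_n:=t^{p^n}-1$; and $L_n$ is the maximal subfield of the maximal abelian-over-$k_n$ subextension of $L_\infty$ that is unramified over $k_n$, i.e. the one killed by the images of the inertia generators $\sigma_i^{p^n}$. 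The one computation that has to be done is the $p^n$-th power formula $\sigma_i^{p^n}=(\nu_{p^n}a_i)\,\sigma^{p^n}$ in $G$, obtained by telescoping $\sigma^j a_i\sigma^{-j}=t^ja_i$ over $0\le j<p^n$; it yields
\[
\Gal(L_\infty/L_nk_\infty)=\omega_nX+\nu_{p^n}Y,\qquad Y:=\overline{\langle a_2,\dots,a_s\rangle},
\]
and hence $\Cl(k_n)_{[p]}\cong X/(\omega_nX+\nu_{p^n}Y)$.

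Finally I would read off the statements. If $s=1$ then $Y=0$ and $\Cl(k_n)_{[p]}\cong X/\omega_nX=\mca{H}/(t^{p^n}-1)\mca{H}$, which is (1)(i); moreover $\omega_n=(1+T)^{p^n}-1\in(p,T)$ for every $n\ge0$, so the topological Nakayama lemma for compact $\Lambda$-modules (cf. Lemma \ref{Lambda}\,(1)) gives $X/\omega_nX=0\iff X=0\iff X/\omega_0X=0$, i.e. $p\mid\#\Cl(k_n)\iff p\mid\#\Cl(k)$, which is (1)(ii). In general, factoring $\omega_n=\nu_{p^n}\omega_0$ gives $\omega_nX+\nu_{p^n}Y=\nu_{p^n}\mca{H}'$ with $\mca{H}':=\omega_0X+Y$; taking $n=0$ (where $\nu_{p^0}=1$) identifies $X/\mca{H}'$ with $\Cl(k)_{[p]}$, which is finite, so $\mca{H}'$ has finite index in $\mca{H}$, giving (2). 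The main obstacle is not any single estimate but the careful group-theoretic bookkeeping in the third step: pinning down $L_nk_\infty$ as a fixed field inside $L_\infty$, keeping track of which parts of the inertia subgroups survive after passing to the relevant abelianizations, and carrying out the power computation in the non-abelian pro-$p$ group $G$; to a lesser extent one must also be careful in the first step that $L_\infty=\bigcup_nL_n$ really is the maximal unramified abelian pro-$p$ extension of $k_\infty$ and that norms match restrictions, since the whole argument rests on the identification $\mca{H}\cong X$.
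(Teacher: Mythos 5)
Your argument is correct and is precisely the standard proof that the paper is citing (Washington, Lemmas 13.14--13.15 and the surrounding discussion in Chapter 13.3): identify $\mca{H}$ with $X=\Gal(L_\infty/k_\infty)$, use total ramification to split the inertia groups off $X$ inside $\Gal(L_\infty/k)$, and compute $\Gal(L_\infty/L_nk_\infty)=\omega_nX+\nu_{p^n}Y$ via the telescoping power formula, with (1)(ii) following from topological Nakayama since $\omega_n\in(p,T)$. The paper gives no independent proof of this proposition, so there is nothing further to compare.
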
 
The number of ramified primes in a $\Zp$-extension is always finite.  General cases reduce to the cases of totally ramified. Indeed, 
for any $\Zp$-extension $k_\infty/k$, there is some $n_0$ such that 
$k_\infty/\red{k_{n_0}}$ is totally ramified at every ramified prime. 

The following lemma can be obtained from Lemma \ref{Lambda} (1): \begin{lem}[Washington \cite{Washington}, Chapter 13.3]\label{classFT}
The \blue{Iwasawa module} $\mca{H}$ is a finitely generated torsion $\Lambda$-module. 
\end{lem}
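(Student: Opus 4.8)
The statement to prove is that $\mca{H}=\varprojlim \Cl(k_n)_{[p]}$ is a finitely generated torsion $\Lambda$-module. The plan is to treat finite generation and torsionness separately, using Nakayama's lemma (Lemma \ref{Lambda} (1)) for the former and the vanishing of the free rank (Lemma \ref{Lambda} (3)) for the latter; the necessary finiteness statements are supplied by Proposition \ref{classnumber}. As a preliminary step, since any $\Zp$-extension becomes totally ramified at every ramified prime after replacing $k$ by a suitable layer $k_{n_0}$, as already noted, I would reduce to the situation where every ramified prime of $k_\infty/k$ is totally ramified. This replacement leaves $\mca{H}$ unchanged as an abelian group (the tower $\{k_n\}_{n\ge n_0}$ being cofinal), merely restricting the coefficient ring from $\Lambda=\Zp[[\Gamma]]$ to $\Zp[[\Gal(k_\infty/k_{n_0})]]$; since $\Lambda$ is finite and free over the latter, being finitely generated torsion transfers in both directions, so the reduction is harmless.

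Under this reduction Proposition \ref{classnumber} (2) provides a $\Lambda$-submodule $\mca{H}'<\mca{H}$ of finite index with $\Cl(k_n)_{[p]}\cong \mca{H}/\nu_{p^n}\mca{H}'$ for every $n$; in particular each $\mca{H}/\nu_{p^n}\mca{H}'$ is a finite $p$-group. For finite generation it suffices by Lemma \ref{Lambda} (1) to see that $\mca{H}/(p,T)\mca{H}$ is finite. The key observation is that $\nu_p=1+t+\cdots+t^{p-1}\equiv p\equiv 0 \pmod{(p,T)}$ under $t=1+T$, so $\nu_p\in(p,T)$ and hence $\nu_p\mca{H}'\subseteq (p,T)\mca{H}$. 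Taking $n=1$, the finite group $\mca{H}/\nu_p\mca{H}'\cong \Cl(k_1)_{[p]}$ surjects onto $\mca{H}/(p,T)\mca{H}$, which is therefore finite. Nakayama's lemma then gives that $\mca{H}$ is a finitely generated $\Lambda$-module.

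For torsionness I would invoke Lemma \ref{Lambda} (3), whose hypothesis is the finiteness of $\mca{H}/\nu_{p^n}\mca{H}$ for all $n$ and whose conclusion forces the free rank $r$ to vanish. Since $\mca{H}'\subseteq \mca{H}$ we have $\nu_{p^n}\mca{H}'\subseteq \nu_{p^n}\mca{H}$, so $\mca{H}/\nu_{p^n}\mca{H}$ is a quotient of the finite group $\mca{H}/\nu_{p^n}\mca{H}'\cong \Cl(k_n)_{[p]}$ and is thus finite for every $n$. Lemma \ref{Lambda} (3) then yields $r=0$, i.e.\ $\mca{H}$ is torsion, completing the proof.

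The argument is short because nearly all the content is packaged into Proposition \ref{classnumber} and the structure theorem Lemma \ref{Lambda}. The points that require genuine care are the initial reduction to the totally ramified case---where one must verify that restricting the coefficient ring to the finite-index subalgebra attached to $k_{n_0}$ preserves both finite generation and torsionness---and the membership $\nu_p\in(p,T)$ that lets a single layer ($n=1$) feed Nakayama's criterion uniformly in the number of ramified primes. I expect the base-change verification to be the main (if modest) obstacle, since it is the only place where one leaves the comfort of the cited black boxes.
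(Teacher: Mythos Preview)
Your proof is correct and follows essentially the same approach as the paper. The paper does not give an explicit proof of this number-theoretic lemma, merely noting that it ``can be obtained from Lemma \ref{Lambda} (1)'' and citing Washington; however, the analogous topological statement (Lemma 4.12) is proved exactly as you do---show $\mca{H}/(p,T)\mca{H}$ is finite via the surjection from $\mca{H}/\nu_{p^n}\mca{H}$, invoke Nakayama, then deduce torsionness from Lemma \ref{Lambda} (3). Your additional care with the reduction to the totally ramified case and the handling of $\mca{H}'$ versus $\mca{H}$ is warranted and correctly executed, but it is refinement rather than a different route.
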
 

Now Theorem \ref{ICF} (Iwasawa's class number formula) follows immediately from Lemma \ref{classFT}, Proposition \ref{classnumber}, and Lemma \ref{Lambda} (3) (The structure theorem of compact $\Lambda$-modules). \\

On the other hand, in a $\Zp$-extension $k_\infty/k$, 
ideal class groups of $k_n$ form an inductive system with respect to the maps induced by the natural injections of ideal groups $I_{k_n}\inj I_{k_{n+1}}$. 
The ideal class group of $k_\infty=\cup k_n$ is defined by $\Cl(k_\infty):=I(k_\infty)/P(k_\infty)$, 
where $I(k_\infty)=\varinjlim I(k_n), P(k_\infty)=\varinjlim P(k_n)$, and satisifies $\Cl(k_\infty)=\varinjlim \Cl(k_n)$. 
We have the following proposition. 

\begin{prop}[Iwasawa \cite{Iwasawa1981}, a remark in Chapter 5] 
Let $\blue{k_\infty}/k$ 
be a cyclotomic $\Zp$-extension. Then, there is an isomorphism of discrete $\Lambda$-modules 
$$\Cl(k_\infty)=\varinjlim \Cl(k_n)_{[p]} \cong (\Qp/\Zp)^{\lambda_{k_\infty/k}} \oplus A',$$
where $A'$ is a bounded module, namely, there is some $a\in \N$ such that $p^aA'=0$, and $\mu_{k_\infty/k}=0$ if and only if $A'=0$.
\end{prop}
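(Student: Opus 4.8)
Write $A_n:=\Cl(k_n)_{[p]}$, so that $\Cl(k_\infty)=\varinjlim_n A_n$ along the maps induced by $I_{k_n}\inj I_{k_{n+1}}$, while $\mca H=\varprojlim_n A_n$ along the norm maps. By Lemma~\ref{classFT} the module $\mca H$ is finitely generated torsion over $\Lambda$, so the structure theorem Lemma~\ref{Lambda}~(2)--(3) provides a pseudo isomorphism
$$\mca H\ \sim\ E:=\Big(\bigoplus_i\Lambda/(f_i^{e_i})\Big)\oplus\Big(\bigoplus_j\Lambda/(p^{m_j})\Big),\qquad \lambda_{k_\infty/k}=\sum_i e_i\deg f_i,\quad \mu_{k_\infty/k}=\sum_j m_j.$$
The plan is to read off $\Cl(k_\infty)$ from this normal form, in the same spirit as the proof of Theorem~\ref{ICF}. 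As there, I first replace $k$ by $k_{n_0}$ for a suitable $n_0$, which affects neither $\lambda_{k_\infty/k}$, nor whether $\mu_{k_\infty/k}=0$, nor $\Cl(k_\infty)$; after this every ramified prime of $k_\infty/k$ is totally ramified, and Proposition~\ref{classnumber}~(2) gives isomorphisms $A_n\cong\mca H/\nu_{p^n}\mca H'$ for one fixed finite-index submodule $\mca H'<\mca H$.

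Next I would pin down the transition maps. Put $\omega_n:=\nu_{p^{n+1}}/\nu_{p^n}=1+t^{p^n}+\cdots+t^{(p-1)p^n}$. From $t^{p^n}-1=(t-1)\nu_{p^n}$ we get $t^{p^n}\equiv1\pmod{\nu_{p^n}\Lambda}$, hence $\omega_n\equiv p\pmod{\nu_{p^n}\Lambda}$; therefore $\omega_n$ acts on any $\Lambda/\nu_{p^n}$-module exactly as multiplication by $p$, and (compatibly with the fact that the composite of extension and norm is multiplication by $[k_{n+1}:k_n]=p$) the connecting maps of $\varinjlim_n A_n$ become, under the identification above, multiplication by $\omega_n$ up to a finite correction. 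So $\Cl(k_\infty)$ is, up to such finite corrections, $\varinjlim_n(\mca H/\nu_{p^n}\mca H,\ \cdot\,\omega_n)$. These finite corrections -- those coming from $\mca H'\subsetneq\mca H$ and from the pseudo isomorphism $\mca H\sim E$ -- all vanish in the limit: for any finite $\Lambda$-module $K$ one has $\nu_{p^n}K=0$ for $n\gg0$ (since $\nu_{p^n}\in(p,T)^n$) and then $\omega_n$ acts on $K$ as $p$, which is eventually nilpotent on $K$, so $\varinjlim_n(K/\nu_{p^n}K,\ \cdot\,\omega_n)=0$. Hence $\Cl(k_\infty)\cong\varinjlim_n(E/\nu_{p^n}E,\ \cdot\,\omega_n)$, with no remaining defect.

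It then suffices to compute this colimit summand by summand. The torsion-free summand $E^{\lambda}:=\bigoplus_i\Lambda/(f_i^{e_i})$ is $\Zp$-free of rank $\lambda:=\lambda_{k_\infty/k}$; on it $t$ acts through a matrix $\Theta$ with $\Theta-1$ topologically nilpotent, so $\nu_{p^n}=\sum_{0\le k<p^n}t^k$ acts through $\lambda\times\lambda$ integral matrices $N_n$ with $N_n\to0$, and $N_n$ is invertible over $\Qp$ (the $f_i$ have no cyclotomic zero, else $E/\nu_{p^n}E$ would be infinite, contradicting finiteness of $A_n$). The identifications $E^{\lambda}/\nu_{p^n}E^{\lambda}=\Zp^{\lambda}/N_n\Zp^{\lambda}\xrightarrow{\ \sim\ }N_n^{-1}\Zp^{\lambda}/\Zp^{\lambda}\subset\Qp^{\lambda}/\Zp^{\lambda}$ are compatible with the maps $\cdot\,\omega_n$ (which become inclusions), and $\bigcup_n N_n^{-1}\Zp^{\lambda}=\Qp^{\lambda}$ because $N_n\to0$; so this summand contributes exactly $(\Qp/\Zp)^{\lambda}$. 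For the $p$-torsion summand $E^{\mu}:=\bigoplus_j\Lambda/(p^{m_j})$ one has $\Lambda/(p^{m_j},\nu_{p^n})\cong(\Z/p^{m_j}\Z)^{\,p^n-1}$ ($\nu_{p^n}$ being distinguished of degree $p^n-1$ modulo $p$), so its colimit -- call it $A'$ -- satisfies $p^{\max_j m_j}A'=0$; moreover the maps $\cdot\,\omega_n$ are injective modulo $p$, so $A'\neq0$ as soon as some $m_j\ge1$. Assembling, $\Cl(k_\infty)\cong(\Qp/\Zp)^{\lambda_{k_\infty/k}}\oplus A'$ with $A'$ bounded and $A'=0$ if and only if no $m_j$ occurs, i.e.\ if and only if $\mu_{k_\infty/k}=0$; this last equivalence can also be read off from Lemma~\ref{Lambda}~(4), since $A'=0$ amounts to the $p$-ranks of the $A_n$ staying bounded.

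The step I expect to be most delicate is the control of the transition maps together with the various finite discrepancies: verifying that the connecting maps of $\varinjlim_n A_n$ really correspond to $\cdot\,\omega_n$ rather than to the evident quotient maps $\mca H/\nu_{p^{n+1}}\mca H\to\mca H/\nu_{p^n}\mca H$ (which point the wrong way), and checking carefully that the finite modules introduced by $\mca H'\subsetneq\mca H$ and by the pseudo isomorphism are annihilated in the colimit, so that the equivalence $A'=0\iff\mu_{k_\infty/k}=0$ is exact and not merely up to a finite group. By contrast the $\lambda$-part computation is robust, resting only on the fact that $\nu_{p^n}$ acts on the $\Zp$-free part through a sequence of integral matrices tending to $0$.
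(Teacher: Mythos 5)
Your argument is correct, and it is essentially the route the paper itself takes: the paper gives no proof of this number-theoretic statement (it is cited to Iwasawa), but its proof of the topological analogue (Proposition \ref{injlim}) uses exactly your strategy --- identify $\varinjlim \Cl(k_n)_{[p]}$ with the colimit of $\mca{H}/\nu_{p^n}\mca{H}$ under multiplication by $\nu_{p^{n+1}}/\nu_{p^n}$, pass to the normal form by a pseudo-isomorphism, and compute the $\lambda$- and $\mu$-summands separately. Your write-up in fact supplies details the paper's analogous proof only asserts (that finite discrepancies die in the colimit, and the explicit $N_n^{-1}\Zp^{\lambda}/\Zp^{\lambda}$ description of the divisible part), so no gap.
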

Such a group is discussed also in Iwasawa \cite{Iwasawa1973Zl}, Chapter 5.

\section{Iwasawa type formula of branched $\Zp$-covers and $\Lambda$-modules}

In this section, we first introduce the notion of a branched $\Zp$-cover of 3-manifolds as an analogue of a $\Zp$-extension, and explain that it essentially generalizes the conventional objects.   
Next, we prove the Iwasawa type formula \blue{on the homology growth} for a branched $\Zp$-covers of $\Q$HS$^3$\blue{'s}. 
Moreover, we state a $p$-adic variant of Sakuma's exact sequence, whose proof will be given afterwards, 
and deduce an alternative proof of the Iwasawa type formula in a parallel manner to the case of number theory. 
In addition, we deduce a proposition on the direct limit of homology groups which will be used in Section 7. 
We also discuss further generalization to the cases of non-$\Q$HS$^3$ briefly.
\subsection{Branched $\Zp$-covers}
In this subsection, we discuss an analogue of $\Zp$-extension. 
\begin{dfn}
Let $M$ be a 3-manifold and let $L$ be a link in $M$. 
A \emph{branched $\Zp$-cover} over $(M,L)$ is 
an inverse system $\wt{M}=\(h_n:M_n\to M\)_n$ of cyclic branched covers of $M$ branched over $L$ with degree $p^n$. 
It is a branched $\Zp$-cover \emph{of $\Q$HS$^3$} if all $M_n$ are $\Q$HS$^3$. \end{dfn}

\begin{prop}
Let $L$ be a link in a 3-manifold $M$ and put $X=M-L$. 
A branched $\Zp$-cover over $(M,L)$ corresponds to 
a homomorphism $\tau:\pi_1(X)\to \Zp$ such that $\tau \mod p \neq 0$ uniquely up to ${\rm Aut}(\Zp)$. 
\end{prop}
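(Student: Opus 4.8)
The plan is to establish a bijection between branched $\Zp$-covers over $(M,L)$ and the set of homomorphisms $\tau:\pi_1(X)\to\Zp$ with $\tau\bmod p\neq 0$, modulo $\mathrm{Aut}(\Zp)$, by dealing with each of the $n$-th layers separately and then passing to the inverse limit. First I would recall the standard finite-degree dictionary: a connected cyclic branched cover $M_n\to M$ of degree $p^n$ branched over (a sublink of) $L$ is determined by, and determines, a surjection $\pi_1(X)\surj\Z/p^n\Z$ — equivalently, an index-$p^n$ subgroup of $\pi_1(X)$ containing no meridian class in a way that obstructs the branching — where the branch locus is exactly the set of components of $L$ whose meridian has nontrivial image. (Here $X=M-L$, and the cover of $X$ extends to a branched cover of $M$ by Fox completion; connectedness of $M_n$ corresponds to surjectivity.) Since every homomorphism $\pi_1(X)\to\Z/p^n\Z$ factors through $H_1(X)$, this is really a statement about $H_1(X)$.

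Next I would assemble the layers. Giving a compatible inverse system $(h_n:M_n\to M)_n$ of cyclic branched $p^n$-covers is the same as giving a compatible system of surjections $\pi_1(X)\surj\Z/p^n\Z$, one for each $n$, compatible with the projections $\Z/p^{n+1}\Z\surj\Z/p^n\Z$; by the universal property of the inverse limit $\Zp=\varprojlim\Z/p^n\Z$ this is precisely a continuous homomorphism $\tau:\pi_1(X)\to\Zp$ (continuity is automatic since $\pi_1(X)$ is given the discrete topology and we only need the images in each finite quotient). The condition that $M_1\to M$ is a genuine connected $p$-fold cover — i.e. that each layer map has the full degree $p^n$ rather than collapsing — translates exactly into $\tau$ composed with $\Zp\surj\Z/p\Z$ being nonzero, i.e. $\tau\bmod p\neq 0$; indeed if $\tau\bmod p=0$ then $\tau$ would land in $p\Zp$ and the resulting "cover" would not have degree growing like $p^n$. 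Two such systems give the same branched $\Zp$-cover (as inverse systems of branched covers with base points) if and only if they differ by an automorphism of the group $\Zp$ acting on the target, which is the asserted $\mathrm{Aut}(\Zp)$-ambiguity; one should note $\mathrm{Aut}(\Zp)\cong\Zp^\times$ acts on the set of such $\tau$ and the quotient is what classifies the covers.

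Finally I would verify that the branch link of $\wt M$, namely the sublink of $L$ along which branching actually occurs, is intrinsic to $\tau$: a component $K_i$ of $L$ lies in the branch locus of some (equivalently every sufficiently high) layer iff $\tau$ sends a meridian $\mu_i$ of $K_i$ to a nonzero element of $\Zp$. This is a direct consequence of the Fox-completion description of branched covers and shows the correspondence is well-behaved with respect to the branch data. The main obstacle I anticipate is purely bookkeeping rather than conceptual: carefully matching the notion of equality of branched $\Zp$-covers "as inverse systems of branched covers with base points" (as fixed in Section 2 of the paper) with equality of homomorphisms up to $\mathrm{Aut}(\Zp)$ — in particular checking that base points eliminate the deck-transformation ambiguity within a single layer, so that the only residual freedom is the choice of identification of the Galois group of the whole tower with $\Zp$. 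Everything else is an application of covering space theory, Fox completion, and the universal property of $\varprojlim$.
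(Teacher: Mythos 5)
Your proposal is correct and follows essentially the same route as the paper: both directions go through the layerwise Galois correspondence (degree-$p^n$ connected cyclic covers of $X$ $\leftrightarrow$ surjections $\pi_1(X)\surj\Z/p^n\Z$), Fox completion to pass to branched covers of $M$, assembly via the universal property of $\varprojlim\Z/p^n\Z\cong\Zp$, and the observation that the only residual freedom is the identification of the limit with $\Zp$, i.e.\ the ${\rm Aut}(\Zp)$-ambiguity. The one point the paper makes slightly more explicitly — that the transition maps $\Z/p^{n+1}\Z\surj\Z/p^n\Z$ extracted from the tower need not be the standard projections and must be normalized by automorphisms of each $\Z/p^n\Z$ — is the same issue you flag as the "residual freedom," so there is no gap.
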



\begin{proof} 
If such $\tau$ is given, then 
the composite 
$\tau_n:=(\Zp\surj \Z/p^n\Z)\circ \tau$ 
with the natural surjection 
is a surjective homomorphism for each $n\in \N$. 
A subgroup $\Ker(\tau_n)<\pi_1(X)$ corresponds to a cyclic cover $h_n:X_n\to X$ with degree $p^n$, and 
hence a cyclic branched cover $h_n:M_n\to M$
by Fox completion. 
The family $\(h_n\)_n$ forms an inverse system in a natural way. 

Conversely, if such an inverse system $\wt{M}=\(h_n:M_n\to M\)_n$ is given, 
a family of surjective homomorphism $\tau_n:\pi_1(X)\surj \Z/p^n\Z$ is obtained. 
Since $h_n$ is a subcover of $h_{n+1}$ for each $n$, $\Ker \tau_n<\Ker\tau_{n+1}$ holds, and there is a surjective homomorphism $q_n:\Z/p^{n+1}\Z\surj \Z/p^n\Z$ which satisfies $q_n\circ \tau_{n+1}=\tau_n$. 
The inverse limit of an inverse system \blue{$\(\Z/p^n\Z, q_n\)_n$} is isomorphic to  $\Zp$, and hence $\tau:\pi_1(X)\to \Zp$ is obtained up to ${\rm Aut}(\Zp)$. 
(In order to obtain $\tau$ explicitly, for each $n$, we replace $\tau_n$ by the composite of $\tau_n$ and an element of ${\rm Aut}(\Z/p^n\Z)$ such that $q_n$ is the natural surjection $\Zp\to \Z/p^n\Z$.)  
Such $\tau$ is unique up to ${\rm Aut}(\Zp)$. \end{proof}

The assumption $\tau \mod p \neq 0$ is equivalent to 
that $\tau$ sends a generator of $\pi_1(X)$ to a unit of $\Zp$, 
that $\tau$ has a dense image, 
and that $\tau$ induce a surjection $\tau: \blue{\wh{\pi}}_1(X) \surj \Zp$ from the pro-$p$ completion \blue{defined as} $\blue{\wh{\pi}}_1(X):=\varprojlim_{N} \pi_1(X)/N$ \blue{with} $N$ \blue{running} through the set of normal subgroups with $p$-power indices. 

\begin{rem} \blue{Let $\wt{M}$ be a branched $\Zp$-cover as above. 
The Fox completion is defined for more general objects than manifolds called spreads (\cite{Fox1957}), and $M_\infty:=\varprojlim M_n$ is the Fox completion of $X_\infty:=\varprojlim X_n$, which is not necessarily a manifold. 
There are some researches on \magenta{such objects} (\cite{Dellomo1986}, \cite{Dellomo1988}, \cite{CS1977}).} 
\end{rem}


Since the image of $\tau:\pi_1(X)\to \Zp$ is abelian, $\tau$ factors through $\pi_1(X)^{\rm ab}\cong H_1(X)$. 
In order to see examples, we 
\blue{prepare} the following lemma \blue{on $H_1(X)$}. 
It was originally given by \cite{KM2008} Lemma 4.2 for $\Q$HS$^3$, but 
it can be prove for a general (oriented, connected, and closed) $M$ in a similar way with use of intersection form. 
\begin{lem} 
Let $L=\sqcup K_i$ be a $d$-component link 
in a 3-manifold $M$, put $X=M-L$, and let $\mu_i\in H_1(X)$ denote the meridian of $K_i$ for each $i$. 
Then that $L$ consists of null-homologous components 
is equivalent to that \blue{the natural exact sequence} 
$$0\to \langle \mu_1,...,\mu_d\rangle\to H_1(X)\to H_1(M)\to 0$$
with $\langle \mu_1,...,\mu_d\rangle\cong \Z^d$ \blue{splits},  
and that $H_1(X)_{\rm free}:=H_1(X)/H_1(X)_{\rm tor}$ has a $\Z$-basis containing the image of $(\mu_1,\ldots,\mu_d)$. 
\end{lem}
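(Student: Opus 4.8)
The plan is to prove both implications by rewriting the two conditions in terms of the long exact homology sequence of the pair $(X,\sqcup K_i)$, or equivalently the Mayer--Vietoris sequence coming from removing tubular neighborhoods of the $K_i$. First I would set up the standard exact sequence
$$
H_2(M)\to H_2(M,L)\to H_1(L)\to H_1(X)\to H_1(M)\to 0,
$$
and identify $H_1(L)\cong\Z^d$ with the group $\langle\mu_1,\dots,\mu_d\rangle$ generated by the meridians via excision (a tubular neighborhood of $K_i$ deformation retracts to $K_i$, and its boundary torus contributes the meridian class). The map $H_1(L)\to H_1(X)$ sends the $i$-th generator to $\mu_i$, so its kernel measures the failure of $\langle\mu_1,\dots,\mu_d\rangle\to H_1(X)$ to be injective, and its cokernel is $H_1(M)$. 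Thus the displayed four-term sequence in the statement is automatically exact once we know $H_1(L)\hookrightarrow H_1(X)$, i.e.\ once we know the connecting map $H_2(M,L)\to H_1(L)$ is zero.

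Next I would translate "$L$ consists of null-homologous components" into homological language: $[K_i]=0$ in $H_1(M)$ for all $i$, which by Poincar\'e--Lefschetz duality and the universal coefficient theorem is equivalent to saying each $K_i$ bounds a (rational, hence after clearing denominators, integral up to torsion) 2-chain in $M$, i.e.\ the image of the linking/intersection pairing with $K_i$ vanishes. Using the intersection form on $M$ as the excerpt suggests, I would show: the connecting homomorphism $H_2(M,L)\to H_1(L)=\bigoplus_i\Z\mu_i$ has image equal to the subgroup of tuples $(a_1\mu_1,\dots,a_d\mu_d)$ for which $\sum a_i[K_i]$'s worth of linking data is realized by a 2-cycle in $M$; concretely, a Seifert surface for $K_i$ in $M$ (which exists precisely when $[K_i]=0$) gives a relative 2-cycle whose boundary is $\mu_i$, so null-homology of each component forces this connecting map to be surjective onto $\bigoplus_i\Z\mu_i$ — wait, that is the wrong direction. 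Let me instead argue: the connecting map $H_2(M,L)\to H_1(L)$ is \emph{zero} iff no nonzero combination $\sum a_i\mu_i$ bounds in $X$ coming from a relative cycle, and the class $\mu_i$ (meridian) maps to $[K_i]$ under the composite $H_1(\partial\nu K_i)\to H_1(X)\to H_1(M)$ only up to the self-intersection correction, so the cleanest route is: dualize. By Poincar\'e duality $H_1(L)\to H_1(X)$ is dual to $H^1(X)\to H^1(L)$, equivalently $H_2(X,\partial)$-level statements, and injectivity of $\bigoplus\Z\mu_i\to H_1(X)$ is equivalent to the meridians being linearly independent over $\Z$ in $H_1(X)$, which is equivalent to each $K_i$ being null-homologous in $M$ (a longitude of $K_i$ in $X$ pairs nontrivially with $\mu_j$ only for $j=i$, by the linking form; independence fails exactly when some $\sum a_i[K_i]=0$ with not all $a_i=0$ is \emph{forced}, but since distinct components are distinct generators the only way to kill a combination is via a relation in $H_1(M)$, i.e.\ via $\sum a_iK_i$ bounding). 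This gives the equivalence with exactness and splitting simultaneously, since a split of a short exact sequence with free quotient subgroup $\Z^d$ is equivalent to the subgroup being a direct summand, which in turn is equivalent to $H_1(X)/\langle\mu_i\rangle\cong H_1(M)$ being the whole story with no torsion absorbed — and then the final clause about $H_1(X)_{\mathrm{free}}$ having a basis extending the $\mu_i$ follows because a primitive embedding $\Z^d\hookrightarrow H_1(X)$ onto a direct summand stays a direct summand after quotienting by torsion.

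The main obstacle I expect is the careful bookkeeping of the connecting homomorphism and the role of torsion: I must be precise that "the sequence splits" really is equivalent to "$\langle\mu_1,\dots,\mu_d\rangle$ is a direct summand of $H_1(X)$," and that this is \emph{not} automatic from exactness (since $H_1(M)$ can have torsion, a pure subgroup need not be a summand unless we check primitivity). The honest way through is: the sequence always exists with $\langle\mu_i\rangle\cong\Z^d$ a \emph{subgroup} (once null-homology is assumed, so the connecting map vanishes); it splits iff $\Z^d$ is a direct summand iff the $\mu_i$ are part of a $\Z$-basis of some complement, and one shows directly using Seifert surfaces $\Sigma_i\subset M$ (with $\partial\Sigma_i$ a longitude) that the classes dual to $\mu_i$ in $H^1(X)\cong H_1(X,\partial X)$ provide explicit splitting homomorphisms. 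Conversely if some $K_i$ is \emph{not} null-homologous, then $[K_i]\neq 0$ in $H_1(M)$ is a torsion (since $H_1(M)$ may be finite) or infinite-order obstruction, and in the exact sequence $\mu_i$ becomes a nonzero multiple of a class that dies, forcing either non-injectivity of $\langle\mu_i\rangle\to H_1(X)$ or non-splitting; I would spell this out by restricting to the sublink $\{K_i\}$ and using naturality. Everything else — the excision identification of $H_1(L)$, the appearance of the intersection form — is routine and follows the cited argument of \cite{KM2008} Lemma 4.2 verbatim, only replacing "$\Q$HS$^3$" by a general oriented closed connected $M$ and keeping track of the possibly nontrivial $H_1(M)$.
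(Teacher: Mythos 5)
Your forward direction (null-homologous $\Rightarrow$ splitting) is essentially the paper's argument: Seifert surfaces for the $K_i$, restricted to the link exterior, give relative $2$-cycles whose Lefschetz/intersection duals in $H_1(X)$ realize the meridians as part of a basis and hence furnish the splitting. Two things go wrong elsewhere. First, a setup error: the group that maps onto $\langle\mu_1,\dots,\mu_d\rangle$ in the long exact sequence is not $H_1(L)$ (which is generated by the core circles $[K_i]$) but $H_2(M,X)\cong\bigoplus_i H_2(V_{K_i},\partial V_{K_i})$, generated by the meridian disks; your ``excision'' identification of $H_1(L)$ with the meridian subgroup is not correct, and your mid-proof course correction never fully repairs it. Note also that exactness at the left of the displayed sequence only forces each $[K_i]$ to be \emph{torsion} in $H_1(M)$ (the obstruction is the map $H_2(M)\to\Z^d$ given by intersection numbers), so the splitting hypothesis is doing real work and cannot be absorbed into exactness.

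Second, and more seriously, the converse (splitting $\Rightarrow$ each $[K_i]=0$) is not actually proved. Your stated mechanism --- ``$\mu_i$ becomes a nonzero multiple of a class that dies, forcing either non-injectivity or non-splitting,'' to be spelled out ``by restricting to the sublink and using naturality'' --- is precisely the assertion that needs proof, and no mechanism is supplied for producing the ``nonzero multiple.'' The paper's argument here is concrete: given a $\Z$-basis of $H_1(X)_{\rm free}$ containing the $\mu_i$, take the basis of $H_2(X^{\circ},\partial X^{\circ})_{\rm free}$ dual to it under the non-degenerate intersection form $I$; the surface $S_i$ dual to $\mu_i$ satisfies $I([S_i],\mu_j)=\delta_{ij}$, so (using $\partial M=\emptyset$) its boundary is a longitude of $K_i$ together with some meridians, and capping off the meridians by meridian disks and pushing the longitude to $K_i$ produces a Seifert surface for $K_i$ in $M$, whence $[K_i]=0$. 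Without this explicit construction of a bounding surface from the dual basis, your converse reduces to the lens-space-type examples it is supposed to rule out (e.g.\ the core of a solid torus in $L(p,1)$, where $\mu$ maps to $p$ times a generator of $H_1(X)\cong\Z$ and the sequence indeed fails to split --- but you have not shown \emph{why} this failure is forced in general).
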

\begin{proof} 
Fix a tubular neighborhood $V_L=\sqcup V_{K_i}$ of $L$ 
and put $X^{\circ}=M-{\rm Int}(V_L)$. Then there is a non-degenerate quadratic form $I:H_2(X^\circ,\del X^\circ)_{\rm free}\times H_1(X^\circ)_{\rm free}\to \Z$ called \emph{the intersection form}. 
Let $\mu_i$ denote the meridians of $K_i$ in \red{all of} $H_1(\del V_{K_i})$, $H_1(X^\circ)\cong H_1(X)$, and $H_1(X)_{\rm free}$ for each $i$. 

Suppose that $[K_i]=0$ in $H_1(M)$ for all $i$. Then for each $i$, there is a surface $S'_i$ in $M$ such that  $\del S'_i=K_i$. We may assume that ${\rm Int}(S'_i)$ intersects both $L$ and $\del V_L$ transversely. Put $S_i:=S_i'\cap X^{\circ}$. 
Since there is a basis of $H_1(\del X^{\circ})$ containing $(\del [S_1],\cdots, \del [S_d])$,  
there is a $\Z$-basis of $H_2(X^{\circ},\del X^{\circ})_{\rm free}$ containing $([S_1],\cdots, [S_d])$. 
Replace elements $T_j$ of this basis other than $S_i$'s so that $I(T_j,\mu_i)=0$. 
Then the dual basis of $H_1(X)$ with respect to $I$ contains $(\mu_1,\cdots,\mu_d)$, 
and its lift defines an isomorphism $H_1(X)\congto \langle \mu_1,\cdots,\mu_d\rangle_{\Z} \oplus H_1(M)$. 


Conversely, suppose that there is a $\Z$-basis of $H_1(X)_{\rm free}$ containing $(\mu_1,\ldots,\mu_d)$,  
and take the dual basis of $H_2(X^{\circ}, \del X^{\circ})$ with respect to $I$. 
Then for each $i$, there is a surface $S_i$ in $X^{\circ}$ such that 
$[S_i]$ is an element of the dual basis satisfying $I([S_i], \mu_i)=1$. We may assume that $S_i$ intersects $\del X$ transversely. 
Since $\del M=\phi$, $\del S_i$ is the sum of a longitude of $\del V_{K_i}$ and some meridians. 
Capping off the meridians in $\del S_i$ by the meridian discs, and extending the longitudes of $\del V_{K_i}$ to $K_i$, we obtain a Seifert surface $S'_i$ of $K_i$, and hence $[K_i]=0$ in $H_1(M)$. 
\end{proof}

Following is an important example obtained from a $\Z$-cover. 
\begin{exa}[TLN-cover, \cite{KM2008}]  
Let $L=\cup K_i$ be a link in a 3-manifold $M$, let $\mu_i$ denote the meridian of $K_i$,  
and suppose that $L$ consists of null-homologous components. 
Then a standard $\Z$-cover $\wt{X}\to X=M-L$ is defined by $\tau:\pi_1(X)\to \Z; \forall \mu_i\mapsto 1$ \blue{which induce the zero map on $H_1(M)$ }
and it is called \emph{the TLN-cover over $(M,L)$}. 
We call the inverse system of branched $p$-covers obtained from such a $\Z$-cover \emph{the TLN-$\Zp$-cover} $\wt{M}=\(h_n:M_n\to M\)_n$ over $(M,L)$. 

\blue{We prove in Subsection 4.4 that} 
all the $M_n$ are $\Q$HS$^3$ if and only if 
$M$ is a $\Q$HS$^3$ and the reduced Alexander polynomial \blue{$\Delta_{L,\tau}(t)$} is not divided by any cyclotomic polynomial of $p$-power-th. 
\end{exa}

In order to explain that the notion of a branched $\Zp$-cover gives an essential generalization, we introduce the notion of an isomorphism of them. 
\begin{dfn} 
Branched $\Zp$-covers $\wt{M}=\(h_n:M_n\to M\)_n$ and $\wt{M'}=\{h'_n:M'_n\to M'\}_n$ are \emph{isomorphic} if the following equivalent conditions are satisfied. \\
(1) There is a compatible system $\{f_n:M'_n\congto M_n\}_n$ of 
isomorphisms 
on each layer.\\
(2) There is an isomorphic cover $f_{\red 0}:X'\congto X$ of the exteriors of some links in the exterior, and there is an isomorphism $\iota: \Zp\congto \Zp$ such that $\iota\circ \tau=\tau' \circ f_{\red 0*}$, where $\tau:\pi_1(X)\to \Zp$ and  $\tau':\pi_1(X')\to \Zp$ are the the defining homomorphisms and $f_{\red 0*}:\pi_1(X)\congto \pi_1(X')$ is the induced map.
\end{dfn}
\begin{proof} We prove the equivalence of (1) and (2). 
Suppose an isomorphism $f_{\red 0}:X'\congto X$ is given. 
Let $\tau_n,\tau'_n$ denote the composites of $\tau$, $\tau'$ and the natural surjection $\Zp\surj \Z/p^n\Z$ respectively, 
and consider the following commutative diagram consists of exact rows. 
$$
\xymatrix{
0\ar[r] & \pi_1(X'_n) \ar[r] \ar^{f_{n*}}[d] & \pi_1(X') \ar^{\tau'_n}[r] \ar^{f_{0*}}_{\cong}[d]& \Z/p^n\Z \ar[r] \ar^{\iota_n}[d] &0\\%
0\ar[r]  & \pi_1(X_n)  \ar[r] & \pi_1(X)  \ar^{\tau_n}[r] & \Z/p^n\Z \ar[r] &0}
$$
By diagram chasing, taking $f_n$ is equivalent to taking $\iota_n$, and 
$f_{n*}$ is isomorphic if and only if $\iota_n$ is isomorphic. 
Then the conclusion follows immediately. 
\end{proof}

A branched $\Zp$-cover $\wt{M}$ over $(M,L)$ is isomorphic to one obtained from a $\Z$-cover 
if and only if the defining homomorphism $\tau$ factors some homomorphism $\Z\inj \Zp$. 
If $L$ is a knot and $M$ is a $\Q$HS$^3$, then $\wt{M}$ is always isomorphic to one obtained from a $\Z$-cover. 
However, if $L$ has more than one component, \blue{then} $\wt{M}$ may not: 
\begin{exa}
Let $p\equiv 1$ (mod 4). Then $\sqrt{-1}\in \Zp^*$. 
Let $L=K_1\cup K_2$ be a 2-component link in \blue{$M=S^3$ and put $X=M-L$.} 
Let $\mu_i$ denote the meridians of $K_i$\blue{,} and 
let $\wt{M}$ be a branched $\Zp$-cover over $(M,L)$ defined by 
$\tau: \blue{H_1(X)\to \Zp;}\mu_1\mapsto 1, \mu_2\mapsto \sqrt{-1}$. 
\blue{Since 1 and $\sqrt{-1}$ cannot move into  $\Z$ at the same time by multiplying any unit of $\Zp$, }
$\tau$ does not factor through any homomorphism $\Z\inj \Zp$\blue{. Therefore} 
$\wt{M}$ cannot be obtained from a $\Z$-cover. 

More concretely, let $p=5$. Then $\sqrt{-1}=....1212_{(5)}$ (base 5). 
Let $\wt{M'}$ be another $\Zp$-cover defined by $\tau':\mu_1\mapsto 1, \mu_2\mapsto 12_{(5)}$. Then $h_n$ and $h_n$ are isomorphic for $n\leq 2$. Thus, a $\Zp$-cover can be ``approximate'' by $\Z$-cover as much as we like. 
\end{exa}

\bred{We remark that an analogue of the Hilbert ramification theory for $\Zp$-extensions holds in a natural way. It is an immediate consequence of 
the theory for finite covers given in \cite{Morishita2012} Chapter 5 or \cite{Ueki1}:} 

\bred{ 
Let $\wt{M}=\(h_n:M_n\to M\)_n$ be a branched $\Zp$-cover over $(M,L)$, let $K\subset M$ be a knot in or outside $L$, and let $\wt{K}=\(K_n\)_n$ denote an inverse system of knots over $K$ in $\wt{N}$. 
For each $n$, there is a subgroup of $\Gal(h_n)$ called the inertia group $I_{K_n}$ and the decomposition group $D_{K_n}$ of $K_n$ in $h_n$. 
They satisfy $I_{K_n}<D_{K_n}<\Gal(h_n)$ and control the behavior of $K_n$ as follows: Let $h_n:M_n\to T_n\to Z_n \to M$ denote the corresponding decomposition. Then \violet{(the images of)} $K_n$ is totally branched in $M_n\to T_n$, totally inert in $T_n\to Z_n$, and \red{totally} decomposed in $Z_n\to M$. Moreover, since each subgroup of $\Gal(h_n)\cong \Z/p^n\Z$ is equal to $\Gal(h_{m,n})$ for some $m\leq n$, $T_n=M_{n_1}$ and $Z_n=M_{n_2}$ for some $n_2\leq n_1\leq n$.}  

\bred{\violet{Note that $\(I_{K_n}\)_n$ and $\(D_{K_n}\)_n$ form surjective systems.} We define \emph{the inertia group} $I_\wt{K}$ and \emph{the decomposition group} $D_\wt{K}$ of $\wt{K}$ as their inverse limits. \violet{Since $\varprojlim \Gal(h_n)\cong \Zp$,} they are open subgroups with $I_{\wt{K}}<D_{\wt{K}}<\varprojlim \Gal(h_n)$, and they control the behavior of $\wt{K}$ in $\wt{M}$ in a similar way to the case of finite covers. 
\red{Since each open subgroup $G'$ of $\varprojlim \Gal(h_n)\cong \Zp$ satisfies $\varprojlim \Gal(h_n)/G'\cong \Gal(h_m)$  for some $m\in \N$}, we have the following.} 

\begin{prop}
\bred{
Let $\wt{M}=\(h_n:M_n\to M\)_n$ be a branched $\Zp$-cover over $(M,L)$, let $K\subset M$ be a knot in or outside $L$, and let $\wt{K}=\(K_n\)_n$ denote a surjecitve system of knots over $K$ in $\wt{N}$. 
Then $\wt{K}$ satisfies one of the following: (i) infinitely branched, finitely inert, and finitely branched, (ii) unbranched, infinitely inert, and finitely decomposed, (iii) unbranched, \violet{non-inert}, and totally decomposed.} 
\end{prop}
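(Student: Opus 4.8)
The plan is to reduce the whole statement to the structure of closed subgroups of $\Zp$, applying layerwise the Hilbert ramification theory for branched $\Zp$-covers recalled just above. First I would fix the surjective system $\wt{K}=\(K_n\)_n$ and record, for each $n$, the inertia group $I_{K_n}$ and the decomposition group $D_{K_n}$ of $K_n$ in $h_n$, with $I_{K_n}<D_{K_n}<\Gal(h_n)\cong \Z/p^n\Z$. As noted above, $\(I_{K_n}\)_n$ and $\(D_{K_n}\)_n$ are surjective inverse systems compatible with the natural surjections $\Z/p^{n+1}\Z\surj \Z/p^n\Z$, so $I_{\wt{K}}=\varprojlim I_{K_n}$ and $D_{\wt{K}}=\varprojlim D_{K_n}$ are closed subgroups of $\varprojlim \Gal(h_n)\cong \Zp$ with $I_{\wt{K}}<D_{\wt{K}}<\Zp$.

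The key algebraic input is the elementary fact that a closed subgroup of $\Zp$ is either $\{0\}$ or $p^m\Zp$ for some $m\geq 0$, the latter being open of index $p^m$, together with the following observation about a surjective inverse system $\(H_n\)_n$ of subgroups $H_n<\Z/p^n\Z$ with limit $H$: since the limit of a surjective system indexed by $\N$ surjects onto each term, $H_n$ is the image of $H$ under $\Zp\surj \Z/p^n\Z$; hence $H=\{0\}$ forces $H_n=\{0\}$ for all $n$, while $H=p^m\Zp$ gives $\#H_n=p^{n-m}$ for $n\geq m$, so $\#H_n\to \infty$. I expect this surjectivity-of-the-limit step to be the only point requiring a word of care; the rest is bookkeeping with the finite-level dictionary (branch index $=\#I_{K_n}$, inert degree $=\#(D_{K_n}/I_{K_n})$, number of components of $K_n$ over $K$ $=[\Gal(h_n):D_{K_n}]$) applied on each layer.

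Applying this to $I_{\wt{K}}<D_{\wt{K}}<\Zp$ yields exactly three mutually exclusive cases. If $I_{\wt{K}}\neq \{0\}$, write $I_{\wt{K}}=p^a\Zp$; then $\#I_{K_n}=p^{n-a}\to \infty$, so $\wt{K}$ is infinitely branched, and since $D_{\wt{K}}=p^b\Zp$ with $b\leq a$ we get $\#(D_{K_n}/I_{K_n})=p^{a-b}$ and $[\Gal(h_n):D_{K_n}]=p^b$ bounded, i.e.\ finitely inert and finitely decomposed, which is case (i). If $I_{\wt{K}}=\{0\}$ but $D_{\wt{K}}=p^b\Zp\neq \{0\}$, then $I_{K_n}=\{0\}$ for all $n$ (unbranched), $\#(D_{K_n}/I_{K_n})=\#D_{K_n}=p^{n-b}\to \infty$ (infinitely inert), and $[\Gal(h_n):D_{K_n}]=p^b$ bounded (finitely decomposed), which is case (ii). If $I_{\wt{K}}=D_{\wt{K}}=\{0\}$, then $I_{K_n}=D_{K_n}=\{0\}$ for all $n$, so $\wt{K}$ is unbranched, the inert degree is $1$, and $K_n$ splits into $[\Gal(h_n):D_{K_n}]=p^n$ components, i.e.\ totally decomposed, which is case (iii). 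Since the three cases exhaust all possibilities for the pair $(I_{\wt{K}},D_{\wt{K}})$, this completes the proof.
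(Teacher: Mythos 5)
Your proposal is correct and follows essentially the same route as the paper, which presents the trichotomy as an immediate consequence of the layerwise Hilbert ramification theory, the surjectivity of the systems $\(I_{K_n}\)_n$ and $\(D_{K_n}\)_n$, and the structure of subgroups of $\varprojlim\Gal(h_n)\cong\Zp$. Your explicit case analysis on the pair $(I_{\wt{K}},D_{\wt{K}})$ among closed subgroups $\{0\}$ or $p^m\Zp$ is exactly the intended argument (and is in fact slightly more careful than the paper's wording, which calls these limits ``open subgroups'' even though they are trivial in cases (ii) and (iii)).
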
 

\violet{We say a branched $\Zp$-cover $\wt{M}=\(h_n:M_n\to M\)_n$ over $(M,L)$ is \emph{properly branched} if it satisfies the following equivalent conditions: (1) Some $h_n$ is properly branched over some non-empty link, (2) $\wt{M}$ is infinitely branched over some non-empty link, (3) $\tau(\mu_i)\neq 0$ for some meridian $\mu_i$ of $L$. }  

\subsection{The Iwasawa type formula and Sakuma's exact sequence}
In this subsection, we prove the Iwasawa type formula for a branched $\Zp$-cover of $\Q$HS$^3$. Moreover, we state a $p$-adic variant of Sakuma's exact sequence. Then we deduce an alternative \blue{direct} proof of the Iwasawa type formula, and also an assertion on a direct limit module.  

\blue{For any manifold $M$, we denote $H_1(M)_{[p]}:= H_1(M,\Zp)=H_1(M)\otimes \Zp$. If $M$ is a $\Q$HS$^3$, then $H_1(M)_{[p]}$ is identified with the $p$-torsion subgroup of $H_1(M)$.} 
A generalization of the analogue of Iwasawa's class number formula (\cite{HMM2006}, \cite{KM2008}) describes the 
behavior of $p$-torsions in a branched $\Zp$-cover of $\Q$HS$^3$:  
\begin{thm}[the Iwasawa type formula] \label{ITF}
Let $\wt{M}=\(h_n:M_n\to M\)_n$ be a branched $\Zp$-cover of $\Q$HS$^3$ over $(M,L)$. 
Then for the $p$-torsion subgroups $H_1(M_n)_{[p]}$ of \blue{the} 1st homology groups, 
there are some $\lambda, \mu \in \N, \nu\in \Z$ and $n_0 \in \N$ such that for $n>n_0$,  
$$\#H_1(M_n)_{[p]} = p^{\lambda n+\mu p^n+\nu}.$$ 
\end{thm}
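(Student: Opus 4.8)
The plan is to reduce the assertion to Lemma \ref{Lambda}(3) applied to the Iwasawa module of the tower $\wt M$. First I would set $X = M-L$, let $\tau\colon \pi_1(X)\to \Zp$ be the defining homomorphism of $\wt M$, and write $\Gamma = \varprojlim \Gal(h_n)\cong \Zp$ with a fixed topological generator $t$; fix the identification $\Lambda = \Zp[[T]]\cong \Zp[[\Gamma]]$ by $1+T\LR t$. The natural surjections $H_1(M_{n+1})_{[p]}\to H_1(M_n)_{[p]}$ (transfer is not needed here; these are the maps induced by the covering projections, as in the Alexander--Fox picture) make $\mca{H}:=\varprojlim H_1(M_n)_{[p]}$ a compact $\Lambda$-module, and the $\Gamma$-action is the one coming from the deck transformations. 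This is the topological analogue of the Iwasawa module $\varprojlim \Cl(k_n)_{[p]}$.

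The next step is to verify the three inputs that Lemma \ref{Lambda}(3) requires. (a) \emph{$\mca{H}$ is a finitely generated $\Lambda$-module}: by Lemma \ref{Lambda}(1) it suffices to check that $\mca H/(p,T)\mca H$ is finite, which follows because $\mca H/T\mca H$ surjects onto $H_1(M_0)_{[p]} = H_1(M)_{[p]}$ (a finite group, as $M$ is a $\Q$HS$^3$) with kernel controlled by the standard co-descent argument — this is the analogue of Lemma \ref{classFT}. (b) \emph{The finite-level quotients are $\mca H/\nu_{p^n}\mca H$ up to a subgroup of finite index}: here I would invoke the branched analogue of Proposition \ref{classnumber}, i.e. the $p$-adic Sakuma exact sequence promised just after the statement, which gives $H_1(M_n)_{[p]}\cong \mca H/\nu_{p^n}\mca H'$ for a fixed subgroup $\mca H'<\mca H$ of finite index (the index absorbing the contribution of the meridians / ramification). (c) \emph{Each $\mca H/\nu_{p^n}\mca H$ is finite}: this is exactly the hypothesis that every $M_n$ is a $\Q$HS$^3$, so $H_1(M_n)_{[p]}$ is finite for all $n$. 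Granting (a)–(c), Lemma \ref{Lambda}(3) yields $r=0$ and constants $\lambda,\mu\in\N$, $\nu\in\Z$, $n_0\in\N$ with $\#(\mca H/\nu_{p^n}\mca H) = p^{\lambda n+\mu p^n+\nu}$ for $n>n_0$; since $H_1(M_n)_{[p]}$ differs from $\mca H/\nu_{p^n}\mca H$ only by a fixed finite factor (from $[\mca H:\mca H']$), adjusting $\nu$ gives the claimed formula $\#H_1(M_n)_{[p]} = p^{\lambda n+\mu p^n+\nu}$.

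There is also a more hands-on alternative, avoiding the structure theorem, which I would at least mention: the $\Lambda$-module $\mca H$ is, up to pseudo-isomorphism, controlled by the reduced Alexander polynomial $\Delta_{L,\tau}(t)$ attached to $\tau$, and $\#H_1(M_n)_{[p]}$ can be computed as a $p$-adic absolute value of a product $\prod_{\zeta^{p^n}=1}\Delta_{L,\tau}(\zeta)$ over $p^n$-th roots of unity, from which the $\lambda n+\mu p^n+\nu$ shape emerges by the usual $p$-adic valuation estimates (Weierstrass preparation: separate the $\mu$-part $p^{m_j}$, the $\lambda$-part given by the roots of the distinguished polynomial, and the error term). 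Either route works; the first is cleaner once the Sakuma sequence is in place.

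The main obstacle is step (b): establishing the branched $p$-adic Sakuma exact sequence and, in particular, pinning down that the discrepancy between $H_1(M_n)_{[p]}$ and $\mca H/\nu_{p^n}\mca H$ is bounded \emph{uniformly in $n$} by a subgroup of finite index. In the number-field case this uses that in a $\Zp$-extension finitely many primes ramify and one may pass to a layer over which ramification is total; the topological analogue needs the Hilbert ramification theory for branched $\Zp$-covers recorded in Subsection 4.1 (finitely many components of $L$ are infinitely branched, and after a finite layer the branching is total along each), together with a careful co-descent computation for the meridian subgroup $\langle\mu_1,\dots,\mu_d\rangle$ of $H_1(X)$. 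Handling the components of $L$ that are \emph{not} infinitely branched — i.e.\ those with $\tau(\mu_i)=0$, contributing to $\mca H'$ but not ramifying — is the delicate bookkeeping point, and is exactly where the hypothesis that all $M_n$ are $\Q$HS$^3$ is used to guarantee finiteness at every finite level.
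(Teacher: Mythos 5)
Your proposal is correct, and it coincides with the route the paper itself develops as its ``alternative proof'': the $p$-adic Sakuma exact sequence (Proposition 4.11), Nakayama's lemma to show $\mathcal{H}=\varprojlim H_1(M_n)_{[p]}$ is a finitely generated torsion $\Lambda$-module (Lemma 4.12), and then the structure theorem, Lemma 3.1(3). The paper's primary in-line proof is different in flavor: it truncates the defining homomorphism $\tau$ modulo $p^{n_1}$, lifts the truncation to a genuine $\Z$-cover, and invokes the Kadokami--Mizusawa formula for $\Z$-covers, the key observation being that the threshold $n_0$ in that formula depends only on $H_1(M)$ and the $p$-adic valuations of the $\tau(\mu_i)$ and hence is uniform in the approximation level $n_1$; your secondary ``hands-on'' route via $\prod_{\zeta^{p^n}=1}\Delta_{L,\tau}(\zeta)$ is essentially what underlies that argument. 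One small correction to step (a): the finiteness of $\mathcal{H}/(p,T)$ should be deduced from its being a \emph{quotient} of the finite group $\mathcal{H}/\nu_{p^n}\mathcal{H}$ modulo $p$ (itself a quotient of $H_1(M_n)_{[p]}$ by the Sakuma sequence, using $\nu_{p^n}\in(p,T)$), not from $\mathcal{H}/T\mathcal{H}$ surjecting onto $H_1(M)_{[p]}$ --- a surjection onto a finite group does not give finiteness, and the clean identification of $\mathcal{H}/T\mathcal{H}$ with $H_1(M)_{[p]}$ available in the one-totally-ramified-prime number-field case is not directly available here.
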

\begin{dfn}
These $\lambda,\mu,$ and $\nu$ are called \emph{the Iwasawa invariants} of $\wt{M}$. 
\magenta{They are sometimes denoted as}  
\violet{$\lambda_{\wt{M}},\mu_{\wt{M}}$, and $\nu_{\wt{M}}$.}  
\end{dfn}

Since a branched $\Zp$-cover can be approximated by $\Z$-covers as much as we like, the proof of this formula comes down to the case obtained from a $\Z$-cover (\cite{KM2008}). 

\begin{proof} 
For each $n_1\in \N$, let $\tau_{n_1}:H_1(X)\surj \Z/p^{n_1}\Z$ be the defining homomorphism of $M_{n_1}\to M$. 
Since $\tau_{n_1}$ lifts to the homomorphism $\tau$ to $\Zp$, 
$H_1(X)_{\rm tor}<\Ker \tau_{n_1}$, and 
$\tau_{n_1}$ sends a torsion-free element $b\in H_1(X)$ to a unit of $\Zp$. 
By composing an automorphism of $\Z/p^{n_1}\Z$, we may assume that $\tau_{n_1}(b)=1$. 
Then $\tau_{n_1}$ lifts to a surjective homomorphism $\wt{\tau_{n_1}}:H_1(X_L)\surj \Z$ obtained as follows: 
take a $\Z$-basis of $H_1(X)_{\rm free}:=H_1(X)/H_1(X)_{\rm tor}$, and define $H_1(X)_{\rm free}\surj \Z$ by sending basis to the integers with the same presentation of the images by $\tau_{n_1}$, and let $\wt{\tau_{n_1}}$ be the composite with $H_1(X)\surj H_1(X)_{\rm free}$. 

\blue{Note that} there is  $n_0$ independent of $n_1$ and 
the Iwasawa type formula holds for $n_0<n<n_1$.  
Indeed, $n_0$ is determined by $H_1(M)$ and the $p$-adic valuations of the images of $\tau$ (\cite{KM2008}). 
Therefore, for sufficiently large $n_1$, the Iwasawa invariants of branched covers $\Zp$ defined by $\wt{\tau_{n_1}}$ and $\tau$ coincide. 
\end{proof} 
In the proof of the \blue{Iwasawa type} formula by Kadokami--Mizusawa in \cite{KM2008}, 
Sakuma's exact sequence (\cite{Sakuma1981} Section 4, \cite{KM2008} Lemma 3.4) played a key role. 
A $p$-adic variant of this sequence is stated in the following.  

\blue{Let $L=\sqcup K_i$ be a link in a 3-manifold $M$, put $X=M-L$, and let $\mu_i \in H_1(X)$ denote the meridian of $K_i$. Let $\wt{M}$ be a branched $\Zp$-cover over $(M,L)$ defined by $\tau:H_1(X)\to \Zp$ \green{and suppose that it is properly branched}. 
Then $h_{n,n+1 *}: H_1(M_{n+1})\to H_1(M_n)$ is surjective for any $n\gg 0$ (e.g., \cite{Ueki1} Theorem 6). 
We define the Iwasawa module of $\wt{M}$ by $\mca{H}:=\varprojlim H_1(M_n)_{[p]}$. 
We fix an identification $\Lambda=\Zp[[T]]\cong \Zp[[\wh{\langle t\rangle}]] = \varprojlim \Zp[t]/(t^{p^n}-1); 1+T\LR t$. Then $\mca{H}$ is a $\Lambda$-module. 
For each $n$, $H_1(M_n)_{[p]}$ is a $\Zp[t]/(t^{p^n}-1)$-module, and hence also is a $\Lambda$-module. Put $\nu_{p^n}=1+t+...+t^{p^n-1}$. 
Now we have the following:} 

%
\begin{prop}[Sakuma's exact sequence]\label{Sakuma} Let $\wt{M}$ be a branched $\Zp$-cover 
and let the notation be as above. \green{Suppose that $\wt{M}$ is properly branched.}\\ 
(1) If $\tau(\mu_i)\neq 0 \mod p$ for every $i$, \blue{equivalent to say, if $\wt{M}$ is totally branched over $L$, }then for each $n$ we have  an exact sequence 
$$H_1(M)_{[p]}\to H_1(M_n)_{[p]}\to \mca{H}/\nu_{p^n}\mca{H}  \to 0.$$
\noindent 
(2) In any case, there is some $n_0$ such that for any $n>n_0$ we have an exact sequence 
$$H_1(M_{n_0})_{[p]}\to H_1(M_n)_{[p]}\to \mca{H}/\nu_{p^{n-n_0}}\mca{H} \to 0.$$
\end{prop}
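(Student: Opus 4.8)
The plan is to prove Proposition~\ref{Sakuma} by transporting, through the M$^2$KR-dictionary, the number-theoretic proof of Proposition~\ref{classnumber} (the analogue of Washington's Proposition~13.22): the role of the maximal unramified abelian pro-$p$ extension $L_\infty$ of $k_\infty$ and of the decomposition/inertia theory of primes will be played by the maximal abelian (unbranched) pro-$p$ cover of $M_\infty:=\varprojlim M_n$ and by the ramification theory for knots in branched $\Zp$-covers developed above. Work with $\Zp$-coefficients throughout. Since $\wt{M}$ is properly branched, the transition maps $h_{n,n+1*}\colon H_1(M_{n+1})_{[p]}\to H_1(M_n)_{[p]}$ are surjective for $n\gg 0$, so $\mca{H}=\varprojlim H_1(M_n)_{[p]}$ carries a natural $\Lambda$-module structure with $\Gamma=\Gal(M_\infty/M)=\overline{\langle t\rangle}\cong\Zp$ acting in the usual way. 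As in the construction recalled in Section~3 (cf.\ \cite{Washington}), let $\wt{M}_\infty\to M_\infty$ be the maximal abelian pro-$p$ cover, unbranched over $M_\infty$; it is canonical, hence Galois over $M$, and --- realizing $\wt{M}_\infty$ as the appropriate inverse limit over $n$ of the maximal abelian unbranched pro-$p$ covers of the $M_n$ --- one obtains $\Gal(\wt{M}_\infty/M_\infty)\cong\mca{H}$ as $\Lambda$-modules. Put $\mf{G}=\Gal(\wt{M}_\infty/M)$, so $1\to\mca{H}\to\mf{G}\to\Gamma\to 1$, and let $\mf{G}_n=\Gal(\wt{M}_\infty/M_n)$ be the preimage of $\Gamma_n=\overline{\langle t^{p^n}\rangle}$.

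For part (1), assume every component $K_i$ of $L$ is totally branched in $\wt{M}$. For each $i$ fix a component of the branch locus of $\wt{M}_\infty\to M$ over $K_i$, with inertia group $I_i\subset\mf{G}$; because $\wt{M}_\infty/M_\infty$ is unbranched, $I_i\cap\mca{H}=1$, and because $K_i$ is totally branched in $M_\infty/M$, the composite $I_i\hookrightarrow\mf{G}\to\Gamma$ is an isomorphism. Thus $I_i$ is a section of $\mf{G}\to\Gamma$, and choosing $\sigma_i\in I_i$ with image $t$ we may write $\sigma_i=a_i\sigma_1$ with $a_i\in\mca{H}$ ($a_1=1$). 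For each $n$ the maximal abelian unbranched pro-$p$ cover $\wt{M_n}$ of $M_n$ sits inside $\wt{M}_\infty$ (its compositum with $M_\infty$ is an abelian pro-$p$ cover of $M_\infty$, unbranched there), so $H_1(M_n)_{[p]}=\Gal(\wt{M_n}/M_n)$ is the quotient of the pro-$p$ abelianization $\mf{G}_n^{\mathrm{ab}}$ by the images of the inertia groups of the branch circles over the $K_i$. Since $\mf{G}_n=\mca{H}\rtimes\overline{\langle\sigma_1^{p^n}\rangle}$ with $\sigma_1$ acting as $t$, one has $\mf{G}_n^{\mathrm{ab}}\cong\mca{H}/(t^{p^n}-1)\mca{H}\oplus\Zp$, and the inertia of the $i$-th branch circle in $\mf{G}_n$ is $\overline{\langle\sigma_i^{p^n}\rangle}$. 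The cocycle computation $(a_i\sigma_1)^{p^n}\equiv a_i^{1+t+\cdots+t^{p^n-1}}\sigma_1^{p^n}=a_i^{\nu_{p^n}}\sigma_1^{p^n}$ in $\mf{G}_n^{\mathrm{ab}}$ shows that killing these inertia groups first collapses the $\Zp$-summand (via $i=1$) and then the elements $\nu_{p^n}a_i$ ($i\ge 2$); using $t^{p^n}-1=(t-1)\nu_{p^n}$ this gives $H_1(M_n)_{[p]}\cong\mca{H}/\nu_{p^n}\mca{H}'$ where $\mca{H}':=(t-1)\mca{H}+\sum_{i\ge 2}\Zp a_i$, a $\Lambda$-submodule with $\mca{H}/\mca{H}'\cong H_1(M)_{[p]}$ (the case $n=0$). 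Finally $\mca{H}'\supseteq(t-1)\mca{H}$ yields a surjection $\mca{H}/\nu_{p^n}\mca{H}'\twoheadrightarrow\mca{H}/\nu_{p^n}\mca{H}$ with kernel $\nu_{p^n}\mca{H}/\nu_{p^n}\mca{H}'$, which is a quotient of $\mca{H}/\mca{H}'\cong H_1(M)_{[p]}$ via multiplication by $\nu_{p^n}$; the composite $H_1(M)_{[p]}\twoheadrightarrow\nu_{p^n}\mca{H}/\nu_{p^n}\mca{H}'\hookrightarrow H_1(M_n)_{[p]}$ (which one identifies with the transfer $h_n^{!}$) is the desired left-hand map, and exactness follows.

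Part (2) reduces to part (1): since $\wt{M}$ is properly branched, the ramification theory for branched $\Zp$-covers supplies an $n_0$ such that in the tail cover $(M_n\to M_{n_0})_{n\ge n_0}$ every infinitely-branched component of the branch locus is totally branched (the components of $L$ that are unbranched in $\wt{M}$ lie in the branch locus of no $M_n$, hence contribute nothing). Applying (1) to this branched $\Zp$-cover of $M_{n_0}$, with $\Gamma_{n_0}$ and a topological generator $s=t^{p^{n_0}}$ in place of $\Gamma$ and $t$, and noting $\varprojlim_m H_1(M_{n_0+m})_{[p]}=\mca{H}$, one obtains for $n=n_0+m>n_0$ the exact sequence $H_1(M_{n_0})_{[p]}\to H_1(M_n)_{[p]}\to\mca{H}/\nu_{p^{n-n_0}}\mca{H}\to 0$, where $\nu_{p^{n-n_0}}=1+s+\cdots+s^{p^{n-n_0}-1}$.

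The technical heart --- and the main obstacle --- lies in the homotopy-theoretic setup: constructing $\wt{M}_\infty$ over the (non-manifold) inverse limit $M_\infty$ with $\Gal(\wt{M}_\infty/M_\infty)\cong\mca{H}$ compatibly with the $\Lambda$-action (the topological counterpart of the identification $\Gal(L_\infty/k_\infty)\cong\varprojlim\Cl(k_n)_{[p]}$), verifying $\wt{M_n}\subseteq\wt{M}_\infty$, and --- most delicately --- identifying $H_1(M_n)_{[p]}$ correctly as the quotient of $\mf{G}_n^{\mathrm{ab}}$ by the inertia images (one must be careful whether one abelianizes inside $\mf{G}_n$ or inside $\mf{G}$, and about the pro-$p$ completions, exactly as in the number-theoretic argument). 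Once this is in place the cocycle computation and the $\nu_{p^n}$/$(t-1)$ algebra are routine, and identifying the left arrow with the transfer is a short extra check. Alternatively one can run the computation chain-theoretically, closer to Sakuma and \cite{KM2008}: the Milnor--Wang sequence for $X_n\to X$ gives $0\to\mca{A}/(t^{p^n}-1)\mca{A}\to H_1(X_n)_{[p]}\to\Zp\to 0$ for the $\Zp$-completed Alexander module $\mca{A}$, Fox completion presents $H_1(M_n)_{[p]}$ as the quotient of $H_1(X_n)_{[p]}$ by the classes of the branch meridians, and the crux becomes the fact that a totally-branched meridian, viewed in $H_1(X_n)_{[p]}$, becomes divisible by $\nu_{p^n}$ (being the ``norm'' $\nu_{p^n}$ of a lift of the downstairs meridian) --- the same $\nu_{p^n}$-divisibility as above, at the price of heavier chain-level bookkeeping.
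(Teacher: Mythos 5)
Your main argument is a genuinely different route from the paper's: you transport Washington's Galois--theoretic proof of Proposition \ref{classnumber} wholesale, constructing a maximal abelian unbranched pro-$p$ cover $\wt{M}_\infty$ of $M_\infty=\varprojlim M_n$, realizing $\mca{H}$ as $\Gal(\wt{M}_\infty/M_\infty)$, taking the inertia groups $I_i$ as sections of $\mf{G}\to\Gamma$, and running the cocycle computation $(a_i\sigma_1)^{p^n}=a_i^{\nu_{p^n}}\sigma_1^{p^n}$ to get $H_1(M_n)_{[p]}\cong\mca{H}/\nu_{p^n}\mca{H}'$. The algebra in that computation is the standard Iwasawa argument and is fine, and your reduction of (2) to (1) by passing to the tail above $n_0=\max_i v_p(\tau(\mu_i))$ is exactly what the paper does. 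The problem is that the step you yourself flag as ``the technical heart --- and the main obstacle'' is not an implementation detail but the actual mathematical content that needs proving, and you leave it unproven: $M_\infty$ is only a Fox completion of an inverse limit and need not be a manifold (the paper's Remark 4.4 makes this point), so the existence of $\wt{M}_\infty$, the identification $\Gal(\wt{M}_\infty/M_\infty)\cong\mca{H}$ \emph{as $\Lambda$-modules with the transition maps being the pushforwards $h_{n,n+1*}$}, the inclusion $\wt{M_n}\subset\wt{M}_\infty$, and the statement that $H_1(M_n)_{[p]}$ is $\mf{G}_n^{\mathrm{ab}}$ modulo inertia all require a Galois/ramification theory for pro-$p$ covers of this non-manifold that is not established in the paper (Proposition 4.8 only treats inertia and decomposition of knots inside the abelian tower $\wt{M}$ itself). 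As written, your proof of (1) therefore has a genuine gap precisely where the number-theoretic proof leans on class field theory.

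The paper's own proof is designed to avoid this: it never leaves the finite levels. Lemma \ref{lem-mn} is proved for each pair $n<m$ via the Hochschild--Serre spectral sequence for $1\to\pi_1(X_m)\to\pi_1(X)\to\Z/p^m\Z\to1$ together with a chain-level transfer diagram and the snake lemma, yielding exact sequences that relate $H_1(X_m)/\nu_{p^n}H_1(X_m)$ to $H_1(M_n)/h_n^!(H_1(M))$ and to $H_1(M_m)/\nu_{p^n}H_1(M_m)$ (the totally-branched hypothesis enters only to make $h_n^!$ surjective on meridians); one then takes $\varprojlim_m$ using the Mittag--Leffler vanishing of Lemma \ref{ML} (the error terms $p^{m-n}\Z/p^m\Z$ and $\langle\wt{\mu_i}\rangle/\nu_{p^n}$ are ML-zero) and Lemma \ref{ML}(3) to identify the limit with $\mca{H}/\nu_{p^n}\mca{H}$. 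Your closing ``alternative'' chain-theoretic sketch (Milnor--Wang plus $\nu_{p^n}$-divisibility of totally branched meridians) is much closer in spirit to what the paper actually does, and if you want a complete proof you should develop that version --- or first establish the pro-cover formalism over $M_\infty$ as a separate foundational result before invoking it.
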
 
The proof will be given in the subsequent subsection. 
Note that \blue{if $H_1(M)$ or $H_1(M_{n_0})$ is finite, then its image} 
by the transfer maps in $H_1(M_n)$'s \blue{is} 
constant for $n\gg 0$. 
Combining Proposition \ref{Sakuma} and the next lemma, the Iwasawa type formula (Theorem \ref{ITF}) follows immediately from the structure theorem of compact $\Lambda$-modules (Lemma \ref{Lambda} (3)), 
similarly to the case of number theory in Section 3. 
\begin{lem} \blue{Suppose that $\wt{M}$ consists of $\Q$HS$^3$'s. Then the} 
Iwasawa module 
$\mca{H}=\varprojlim H_1(M_n)_{[p]}$ of $\wt{M}$ is a finitely generated torsion $\Lambda$-module. 
\end{lem}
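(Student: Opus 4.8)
The plan is to imitate the number-theoretic proof recalled in Section 3 (Lemma \ref{classFT}, following Washington, Chapter 13.3): show that $\mca{H}$ is a compact $\Lambda$-module, deduce that it is finitely generated over $\Lambda$ by Nakayama's lemma (Lemma \ref{Lambda} (1)), and then deduce that it is torsion from the structure theorem (Lemma \ref{Lambda} (2), (3)) together with the fact that $\mca{H}/\nu_{p^n}\mca{H}$ is finite for every $n$. The finiteness of $\mca{H}/\nu_{p^n}\mca{H}$ is where the $p$-adic Sakuma exact sequence (Proposition \ref{Sakuma}) enters, playing the role of Proposition \ref{classnumber}.

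First I would dispose of the preliminaries. Each $M_n$ is a $\Q$HS$^3$, so each $H_1(M_n)_{[p]}$ is a finite abelian $p$-group, whence $\mca{H}=\varprojlim H_1(M_n)_{[p]}$ is profinite, hence compact; the topological generator $t$ of $\varprojlim\Gal(h_n)\cong\Zp$ acts compatibly with the transition maps, and the action extends continuously to $\Lambda=\Zp[[T]]$, so $\mca{H}$ is a compact $\Lambda$-module. I would also remark that the $\Q$HS$^3$ hypothesis forces $\wt{M}$ to be properly branched: if every meridian $\mu_i$ satisfied $\tau(\mu_i)=0$, then, since the image of $\tau$ is abelian and $\pi_1(X)/\langle\!\langle\mu_i\rangle\!\rangle\cong\pi_1(M)$, the map $\tau$ would factor through the finite group $H_1(M)$, contradicting $\tau\bmod p\neq 0$ (equivalently, denseness of the image). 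Hence Proposition \ref{Sakuma} (2) applies: for all $n$ large enough, $\mca{H}/\nu_{p^{n-n_0}}\mca{H}$ is a quotient of the finite group $H_1(M_n)_{[p]}$, so $\mca{H}/\nu_{p^m}\mca{H}$ is finite for every $m$.

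Next I would prove finite generation and then torsion. Since $\nu_p=1+t+\cdots+t^{p-1}\equiv p\equiv 0$ modulo the maximal ideal $(p,T)=(p,t-1)$, the quotient $\mca{H}/(p,T)\mca{H}$ is a quotient of the finite group $\mca{H}/\nu_p\mca{H}$, hence finite; by Lemma \ref{Lambda} (1) (Nakayama), $\mca{H}$ is a finitely generated $\Lambda$-module. Applying Lemma \ref{Lambda} (2), $\mca{H}$ is pseudo-isomorphic to $\Lambda^{\oplus r}\oplus(\oplus_i\Lambda/(f_i^{e_i}))\oplus(\oplus_j\Lambda/(p^{m_j}))$; since $\mca{H}/\nu_{p^n}\mca{H}$ is finite for all $n$, Lemma \ref{Lambda} (3) forces $r=0$, and as the free rank is a pseudo-isomorphism invariant, $\mca{H}$ is a finitely generated torsion $\Lambda$-module, as claimed. (This simultaneously yields the alternative proof of Theorem \ref{ITF} via Proposition \ref{Sakuma} and Lemma \ref{Lambda} (3), as announced.)

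The proof is mostly bookkeeping of results already in place, so I do not expect a serious obstacle; the one point that needs care is the legitimate use of Proposition \ref{Sakuma} --- that is, the observation that a branched $\Zp$-cover of $\Q$HS$^3$'s is automatically properly branched --- and checking that the argument is not circular, which it is not, since the first proof of Theorem \ref{ITF} and the forthcoming proof of Proposition \ref{Sakuma} do not invoke the present lemma.
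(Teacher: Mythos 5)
Your proof is correct and follows essentially the same route as the paper's: compactness of $\mca{H}$ as an inverse limit of finite modules, finiteness of $\mca{H}/(p,T)\mca{H}$ via the Sakuma-type isomorphism $H_1(M_n)_{[p]}/h_n^!(H_1(M)_{[p]})\cong\mca{H}/\nu_{p^n}\mca{H}$, Nakayama's lemma for finite generation, and Lemma 3.1 (3) for torsionness. Your additional observation that the $\Q$HS$^3$ hypothesis forces $\wt{M}$ to be properly branched (so that Proposition 4.11 is legitimately applicable) is a detail the paper leaves implicit, and is a worthwhile check rather than a different approach.
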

\begin{proof} 
Since $\mca{H}$ is an inverse limit of $\Lambda$-modules with finite orders, it is a compact $\Lambda$-module. 
In the composite $H_{1}(M_n)_{[p]}/h_n^!(H_{1}(M_{1})_{[p]})$ $\cong$ $\mca{H}/\nu_{p^{n}}\mca{H}$ $\surj$ $\mca{H}/(p,T)$, 
the right hand term is finite. 
Hence the module $\mca{H}$ is finitely generated by Nakayama's lemma (Lemma \ref{Lambda} (1)). 
Moreover, since $\mca{H}/\nu_{p^{n}}\mca{H}$ is a finite group, $\mca{H}$ is a torsion $\Lambda$-module by Lemma \ref{Lambda} (3). 
\end{proof} 
\blue{We note that $\mca{H}$ 
can be a finitely generated torsion $\Lambda$-module even if $\wt{M}$ does not consist of $\Q$HS$^3$'s (See Subsection 4.4).}

Finally, we prove an assertion on a direct limit module by the transfer maps. 
\blue{It plays an important role in the proof of Kida's formula in Section 7.}  
\begin{prop}\label{injlim} Let $\wt{M}$ be a branched $\Zp$-cover of $\Q$HS$^3$ and let $H_1(\wt{M})_{[p]}:=\varinjlim H_1(M_n)_{[p]}$ denote the direct limit by the transfer maps. Then, there is an isomorphism of discrete $\Lambda$-modules $H_1(\wt{M})_{[p]}\cong (\Qp/\Zp)^{\lambda_{\wt{M}}}\oplus A'$,
where $A'$ is a bounded $\Lambda$-module, namely, there is some $a\in \N$ such that $p^aA'=$0, and $A'=0$ holds if and only if $\mu_{\wt{M}}=0$. 
\end{prop}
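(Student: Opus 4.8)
The plan is to mirror the proof of the corresponding number-theoretic statement (Iwasawa \cite{Iwasawa1981}, Chapter 5), translating the ideal class group side into the homology side via the dictionary already set up in this paper. First I would introduce the Iwasawa module $\mca{H}=\varprojlim H_1(M_n)_{[p]}$, which by the lemma preceding Proposition~\ref{injlim} is a finitely generated torsion $\Lambda$-module, and apply the structure theorem (Lemma~\ref{Lambda}(2)) to obtain a pseudo-isomorphism $\mca{H}\sim \bigl(\oplus_i\Lambda/(f_i^{e_i})\bigr)\oplus\bigl(\oplus_j\Lambda/(p^{m_j})\bigr)$ with $\lambda_{\wt{M}}=\sum e_i\deg f_i$ and $\mu_{\wt{M}}=\sum m_j$ (there is no free part since each $\mca{H}/\nu_{p^n}\mca{H}$ is finite, by Lemma~\ref{Lambda}(3)). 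The point is then to pass from the inverse limit $\mca{H}$ to the direct limit $H_1(\wt{M})_{[p]}=\varinjlim H_1(M_n)_{[p]}$ by Pontryagin duality: the transfer (corestriction) maps on $H_1(M_n)_{[p]}$ are dual to the norm maps on the inverse system, so $H_1(\wt{M})_{[p]}$ is the Pontryagin dual of a module closely related to $\mca{H}$, and taking duals turns the compact $\Lambda$-module decomposition into a discrete one, with $\Lambda/(p^{m})$ dualizing to a bounded piece and $\Lambda/(f^e)$ dualizing to a divisible piece of corank $e\deg f$.

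Concretely, the key steps in order: (i) using Sakuma's exact sequence (Proposition~\ref{Sakuma}), identify for $n\gg 0$ the cokernel of the transfer $H_1(M_{n_0})_{[p]}\to H_1(M_n)_{[p]}$ with $\mca{H}/\nu_{p^{n-n_0}}\mca{H}$ up to a bounded discrepancy coming from the (finite, eventually constant) image of $H_1(M_{n_0})_{[p]}$; (ii) pass to the limit to get that $H_1(\wt{M})_{[p]}$ differs from $\varinjlim \mca{H}/\nu_{p^{n}}\mca{H}$ only by a bounded module; (iii) compute $\varinjlim \mca{H}/\nu_{p^n}\mca{H}$ directly on the normal form, i.e. for each elementary module $\Lambda/(g)$ compute $\varinjlim \Lambda/(g,\nu_{p^n})$ with transition maps induced by multiplication by $\nu_{p^{n+1}}/\nu_{p^n}$; (iv) check that $\varinjlim \Lambda/(p^m,\nu_{p^n})$ is killed by $p^m$ (hence contributes to the bounded part $A'$), while $\varinjlim \Lambda/(f^e,\nu_{p^n})\cong (\Qp/\Zp)^{e\deg f}$, since $\nu_{p^n}$ is coprime to the distinguished polynomial $f$ and the colimit of the cyclic groups $\Lambda/(f^e,\nu_{p^n})$ of orders $p^{e\deg f\cdot n+O(1)}$ is the appropriate divisible group; (v) assemble: $H_1(\wt{M})_{[p]}\cong(\Qp/\Zp)^{\lambda_{\wt{M}}}\oplus A'$ with $p^aA'=0$ for $a=\max(m_j,n_0\text{-contribution})$, and $A'=0$ iff all $m_j=0$ iff $\mu_{\wt{M}}=0$ (using Lemma~\ref{Lambda}(4)).

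The main obstacle I expect is step (ii)–(iii): controlling the discrepancy between the genuine direct limit $H_1(\wt{M})_{[p]}$ and the ``model'' $\varinjlim\mca{H}/\nu_{p^n}\mca{H}$, because the pseudo-isomorphism in the structure theorem has finite but possibly nonzero kernel and cokernel, and because Sakuma's sequence only controls cokernels of transfer maps, not the maps themselves, up to a bounded error term. One needs to verify that all these finite discrepancies assemble into a genuinely \emph{bounded} $\Lambda$-module in the limit (not merely a direct limit of finite groups, which could a priori be unbounded), and that they do not interfere with the divisible part. I would handle this by working $p$-adically: a divisible subgroup splits off, so it suffices to check that the non-divisible quotient is bounded, which follows from Lemma~\ref{Lambda}(4) applied to $\mca{H}$ together with the boundedness of the $p$-ranks of the error terms (each is a subquotient of $H_1(M_{n_0})_{[p]}$ for a fixed $n_0$, hence of bounded rank). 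The argument is otherwise routine and entirely parallel to the classical case cited as Iwasawa \cite{Iwasawa1981}, Chapter 5, and the analogous Proposition in Section~3.
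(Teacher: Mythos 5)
Your overall route --- Sakuma's exact sequence, passage to the direct limit, and the computation of $\varinjlim \mca{H}/\nu_{p^n}\mca{H}$ on the normal form of the structure theorem --- is exactly the paper's argument, and steps (i), (iii), (iv) are fine. (The Pontryagin-duality framing is not actually used in your concrete steps and can be dropped; the transfer-limit is compared to $\varinjlim\mca{H}/\nu_{p^n}\mca{H}$ directly via the exactness of $\varinjlim$ applied to Sakuma's sequences.)

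There is, however, a genuine gap in the direction $\mu_{\wt{M}}=0\Rightarrow A'=0$. Taking direct limits of Sakuma's sequences gives an exact sequence $0\to K\to H_1(\wt{M})_{[p]}\to (\Qp/\Zp)^{\lambda_{\wt{M}}}\oplus A''\to 0$, where $K=h_\infty^!(H_1(M_{n_0})_{[p]})$ is a bounded group and $A''$ is the bounded part of $\varinjlim\mca{H}/\nu_{p^n}\mca{H}$. Your step (v) identifies $A'$ with $A''$ (``$A'=0$ iff all $m_j=0$''), but when $\mu_{\wt{M}}=0$ and hence $A''=0$, the extension can still have nonzero reduced part unless $K$ lies inside the maximal divisible subgroup: compare $0\to\Z/p\to \Z/p\oplus\Qp/\Zp\to\Qp/\Zp\to 0$, where all $p$-ranks are bounded by $2$ yet the reduced quotient is $\Z/p\neq 0$. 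So your proposed fix --- boundedness of the $p$-ranks of $H_1(M_n)_{[p]}$ via Lemma 3.1(4) plus boundedness of the error terms --- only re-proves that $A'$ is bounded; it does not show $A'=0$. The missing ingredient (which the paper supplies) is that $K$ is infinitely $p$-divisible inside $H_1(\wt{M})_{[p]}$: since $\tau$ kills torsion, $h_{n,n+1*}$ is surjective on torsion for $n\gg0$, and $h_{n,n+1}^!\circ h_{n,n+1*}$ is multiplication by $p$, so $h_{n+1}^!(H_1(M_{n_0})_{[p]})\subset pH_1(M_{n+1})_{[p]}$; iterating, the image of $H_1(M_{n_0})_{[p]}$ in the direct limit is contained in the divisible part and is absorbed into $(\Qp/\Zp)^{\lambda_{\wt{M}}}$. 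Adding this argument closes the gap; the converse direction ($A'=0\Rightarrow\mu_{\wt{M}}=0$) is fine as you have it.
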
 

\begin{proof} 
\blue{In general, let $E$ be a $\Lambda$-module. If $E\sim \Lambda/(p^m)$, then $\varinjlim E/ E/\nu_{p^n} E$ by $\nu_{p^{n+1}}/\nu_{p^n}$ is a bounded infinite group. 
If $E\sim \Lambda/(p^m)$, then $\varinjlim E/ E/\nu_{p^n} E\cong (\Qp/\Zp)^\lambda$. 
Let $E\sim E'$ be a pseudo isomorphism of $\Lambda$-modules. Then their direct limits by $\nu_{p^{n+1}}/\nu_{p^n}$ are isomorphic to each other.}


\blue{Now we} consider the following direct system for the exact sequences of Proposition \ref{Sakuma}. 
$$\xymatrix{
H_1(M)_{[p]} \ar^{h_{n+1}^!}[r] &H_1(M_{n+1})_{[p]}\ar[r] 
&\mca{H}/\nu_{p^{n+1}}\mca{H} \ar[r] &0\\ 
H_1(M)_{[p]} \ar^{h_n^!}[r] \ar@{=}[u] &H_1(M_n)_{[p]} \ar[r] \ar^{h_{n+1,n}^!}[u] &\mca{H}/\nu_{p^n}\mca{H} \ar[r] \ar^{\nu_{p^{n+1}}/\nu_{p^n}}[u] &0
}$$ 
\blue{Since} the direct limit functor is exact, there is an isomorphism 
$H_1(\wt{M})_{[p]}/h_\infty^!(H_1(M)_{[p]})$ $\cong (\Qp/\Zp)^{\lambda_{\wt{M}}}\oplus A''$,
\blue{where} 
$A''$ is a bounded $\Lambda$-module, and $A''=0$ if and only if $\mu_{\wt{M}}=0$. 
\blue{Since} $h_\infty^!(H_1(M)_{[p]})$ is a finite group, 
there is an isomorphism $H_1(\wt{M})_{[p]}\cong (\Qp/\Zp)^{\lambda_{\wt{M}}}\oplus A'$ with a bounded module $A'$. 
If $\mu_{\wt{M}}=0$, then by Lemma \ref{Lambda} (4), the $p$-ranks of $H_1(M_n)_{[p]}$ are bounded, and hence the $p$-ranks of $h_n^!(H_1(M)_{[p]})$ are also bounded. 
Moreover, since $\tau:H_1(X)\to \Zp$ sends torsions to zero, 
$h_{n,n+1*}:H_1(M_{n+1})\to H_1(M_n)$ is surjective on the torsion subgroup. 
Thus, for sufficiently large $n$, a map $h_{n,n+1}^!\circ h_{n,n+1*}$ on $H_1(M_{n+1})$ is multiplication by $p$, 
and $h_{n}^!(H_1(M)_{[p]})$ is contained in $pH_1(M_{n+1})_{[p]}$.  Therefore $h_{\infty}^!(H_1(M)_{[p]})$ has to be a subgroup of a divisible group, and there is an isomorphism 
$H_1(\wt{M})_{[p]}\cong (\Qp/\Zp)^{\lambda_{\wt{M}}}$. 
Conversely, if this isomorphism holds, then $\mu_{\wt{M}}=0$ clearly holds. \end{proof}

\subsection{Proof of Sakuma's exact sequence (Proposition \ref{Sakuma})}
We prove Proposition \ref{Sakuma} by modifying the argument in \cite{KM2008}, Chapter 3. 

The assertion (2) can be reduced to (1). Indeed, 
\blue{consider the branched cover $h_{n,n+1}:M_{n+1}\to M_n$ of degree $p$, 
let $K$ be a component of $h_n^{-1}(L)$ in $M_n$, let $\mu$ denote its meridian, 
and $\wt{\mu}$ \bred{a meridian over $\mu$} in $M_{n+1}$. 
If $h_{n,n+1}$ is branched along $K$, then $h_{n,n+1*}(\wt{\mu})=p\mu$. If otherwise, then $h_{n,n+1*}(\wt{\mu})=\mu$. 
Therefore, if we put $n_0=\max \(v_p(\tau(\mu_i))\)_i$ 
\bred{where $v_p(x)$ denote the $p$-adic valuation of a number $x$}, 
then $\tau_{n_0}:H_1(X_{n_0})\to p^{n_0}\Zp\congto \Zp$ sends 
every meridian $\mu_i$ of $h_{n_0}^{-1}(L)$ in $H_1(X_{n_0})$ 
to an element whose image by mod $p$ is non-trivial. 
Thus, by replacing the base space by $X_{n_0}$, we can reduce (2) to (1).} 


\blue{In order to prove the assertion (1), we prepare several lemmas.}  
We put $\Lambda_0=\Z[\langle t\rangle]$. 
Then every $H_1(M_n)$ is a $\Lambda_0=\Z[\langle t\rangle]$-module. 

\begin{lem} \label{lem-mn} 
\blue{Suppose $n<m$. If $\wt{M}$ is totally branched over $L$, then there is 
\bred{a natural} exact sequence of $\Lambda_0$-modules:} 
$$p^{m-n}\Z/p^m \Z \to H_1(X_m)/\nu_{p^n}H_1(X_m) \to H_1(M_n)/h_n^!(H_1(M))\to 0.$$
\blue{In addition, for the meridian module $\langle\wt{\mu_i}\rangle :=\Ker(H_1(X_m)\to H_1(M_m))$, 
there is 
\bred{a natural} exact sequence:} 
$$0\to \langle\wt{\mu_i}\rangle/\nu_{p^n} \to H_1(X_m)/\nu_{p^n}H_1(X_m)
\to H_1(M_m)/\nu_{p^n} H_1(M_m) \to 0.$$ 
\blue{By taking $\otimes \Zp$, similar exact sequences of $\Lambda$-modules are obtained.} 
\end{lem}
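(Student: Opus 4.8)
The plan is to build both exact sequences from the covering space structure, imitating the construction in Kadokami--Mizusawa \cite{KM2008} Chapter 3 but with $\Z$-coefficients replaced by $\Z/p^m\Z$ everywhere. First I would fix the setup: $X=M-L$, $X_m\to X$ is the cyclic cover of degree $p^m$ with deck group $\langle t\rangle\cong \Z/p^m\Z$, and $X_m\to X_n$ (for $n<m$) is the intermediate cover with deck group $\langle t^{p^n}\rangle$, whose norm element is exactly $\nu_{p^n}=1+t+\cdots+t^{p^n-1}$. The Fox completions give the branched covers $M_m, M_n$ of $M$. The hypothesis that $\wt{M}$ is totally branched over $L$ means $\tau(\mu_i)\in\Zp^*$ for every meridian $\mu_i$, so each component of $L$ is totally branched in every layer, and the preimage of $L$ in $M_m$ consists of one knot over each $K_i$ with meridian $\wt{\mu_i}$ generating the kernel of $H_1(X_m)\to H_1(M_m)$.

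The second (lower) exact sequence is the easier one: it is just the Fox-completion statement that $H_1(M_m)$ is the quotient of $H_1(X_m)$ by the meridian submodule of the preimage of $L$, made equivariant. Applying $-\otimes_{\Lambda_0}\Lambda_0/(\nu_{p^n})$ (equivalently, taking coinvariants under $\langle t^{p^n}\rangle$) to the short exact sequence $0\to\langle\wt{\mu_i}\rangle\to H_1(X_m)\to H_1(M_m)\to 0$ gives a right-exact sequence; the point is that $H_1(M_m)/\nu_{p^n}H_1(M_m)$ already equals $H_1(M_m)$-coinvariants, since $t$ acts on $M_m$ through the subquotient $\Gal(M_m/M_n)$ only after we note $M_m\to M_n$ is the cover whose norm is $\nu_{p^n}$ — so the right-hand term is genuinely $H_1(M_m)/\nu_{p^n}H_1(M_m)$, and left-exactness on the nose (not just up to a Tor term) follows because we take the sequence of modules, not of homology groups, exactly as in \cite{KM2008}. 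I would write this out by the same snake-lemma diagram chase used there.

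The first (upper) exact sequence is the substantive one and encodes the transfer map $h_n^!:H_1(M)\to H_1(M_n)$. Here I would use the covering $X_m\to X_n$ with deck group $\langle t^{p^n}\rangle\cong \Z/p^{m-n}\Z$ and the standard short exact sequence relating $H_1(X_m)$, $H_1(X_n)$ and the group homology $H_*(\langle t^{p^n}\rangle)$. Concretely, the five-term exact sequence (or the Cartan--Leray spectral sequence, or just the explicit chain-level computation in \cite{KM2008}) for this cyclic cover gives $H_2(\Z/p^{m-n}\Z)\to (H_1(X_m))_{\langle t^{p^n}\rangle}\to H_1(X_n)\to H_1(\Z/p^{m-n}\Z)\to 0$, where the coinvariants term is $H_1(X_m)/\nu_{p^n}H_1(X_m)$, $H_1(\Z/p^{m-n}\Z)$ contributes the meridian/deck-group part, and the incoming $H_2(\Z/p^{m-n}\Z)\cong \Z/p^{m-n}\Z$ is rewritten as $p^{m-n}\Z/p^m\Z$ (the image of multiplication by $p^{m-n}$ on $\Z/p^m\Z$) using that $M$ is connected and the cover is abelian. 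Then I would identify the cokernel of $H_1(X_m)/\nu_{p^n}H_1(X_m)\to H_1(M_n)$ with $H_1(M_n)/h_n^!(H_1(M))$: surjecting $H_1(X_n)\surj H_1(M_n)$ and using that the meridians of $h_n^{-1}(L)$ lift to the deck-group part, the image of $H_1(X_m)$ in $H_1(M_n)$ is precisely the subgroup corresponding to the norm/transfer from the bottom, which is $h_n^!(H_1(M))$. Finally, tensoring with $\Zp$ is flat-exact, giving the $\Lambda$-module versions.

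The main obstacle I expect is getting the left-hand term of the upper sequence exactly right: the group-homology contribution $H_2(\Z/p^{m-n}\Z)$ must be identified canonically with $p^{m-n}\Z/p^m\Z$ rather than merely abstractly with $\Z/p^{m-n}\Z$, and this identification has to be compatible with the transition maps as $n$ varies (this compatibility is what makes Proposition \ref{injlim}'s diagram commute with the vertical map $\nu_{p^{n+1}}/\nu_{p^n}$). I would handle this by tracking the connecting homomorphism at the chain level over $\Lambda_0=\Z[\langle t\rangle]$, using that $\nu_{p^m}=\nu_{p^n}\cdot(1+t^{p^n}+\cdots+t^{(p^{m-n}-1)p^n})$ and that the kernel of multiplication by $\nu_{p^n}$ on $\Z[\langle t\rangle]/(t^{p^m}-1)$ is generated by $t^{p^n}-1$, so the relevant $\mathrm{Tor}$/$H_2$ term is the cyclic group $\nu_{p^n}\cdot\Z/(\nu_{p^m})\cong p^{m-n}\Z/p^m\Z$ via the augmentation. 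Everything else is the diagram-chase bookkeeping already carried out in \cite{KM2008} for $\Z$-coefficients, which transfers verbatim.
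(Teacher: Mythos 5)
Your overall strategy --- imitate the Kadokami--Mizusawa chain-level argument and extract both sequences from covering-space exact sequences --- points in the right direction, but the concrete mechanism you propose for the first sequence does not work. First, the norm element of $\Gal(X_m/X_n)=\langle t^{p^n}\rangle$ is $1+t^{p^n}+\cdots+t^{(p^{m-n}-1)p^n}=\nu_{p^m}/\nu_{p^n}$, not $\nu_{p^n}$; and the $\langle t^{p^n}\rangle$-coinvariants of $H_1(X_m)$ are $H_1(X_m)/(t^{p^n}-1)H_1(X_m)$, which is in general strictly larger than $H_1(X_m)/\nu_{p^n}H_1(X_m)$ (recall $(t^{p^n}-1)=\nu_{p^n}(t-1)$). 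Second, $H_2(\Z/p^{m-n}\Z;\Z)=0$ for a finite cyclic group, so the five-term sequence of the single cover $X_m\to X_n$ cannot produce the term $p^{m-n}\Z/p^m\Z$; your identification of that term with a group-homology $H_2\cong\Z/p^{m-n}\Z$ is simply not available. In the paper the term arises differently: one writes the two four-term sequences $0\to(t-1)H_1(X_m)\to H_1(X_m)\to H_1(X)\to\Z/p^m\Z\to 0$ and $0\to(t^{p^n}-1)H_1(X_m)\to H_1(X_m)\to H_1(X_n)\to p^n(\Z/p^m\Z)\to 0$ (both using $H_2(\text{cyclic})=0$), connects them by the vertical maps $\mathrm{id}$, $\nu_{p^n}$, $h_n^!$, $\times p^n$ coming from the chain-level identity $t^{p^n}-1=\nu_{p^n}(t-1)$ and the transfer, checks that $\nu_{p^n}:(t-1)H_1(X_m)\to(t^{p^n}-1)H_1(X_m)$ is onto so that the relevant cokernel really is $H_1(X_m)/\nu_{p^n}H_1(X_m)$, and applies the snake lemma; the left-hand term is $\Ker(\times p^n:\Z/p^m\Z\to p^n(\Z/p^m\Z))=p^{m-n}\Z/p^m\Z$, i.e.\ it comes from the $H_1$'s of the two Galois groups, not from any $H_2$ or Tor. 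Because your plan uses only the one cover $X_m\to X_n$, the transfer $h_n^!$ never enters, and the claim that the image of $H_1(X_m)$ in $H_1(M_n)$ equals $h_n^!(H_1(M))$ is unsupported and false as stated: that image is the kernel of $H_1(X_n)\to p^n(\Z/p^m\Z)$, which differs from $h_n^!(H_1(X))$ by exactly the ambiguity the $p^{m-n}\Z/p^m\Z$ term measures.

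Two further points. The passage from $H_1(X_n)/h_n^!(H_1(X))$ to $H_1(M_n)/h_n^!(H_1(M))$ is where total branching is actually used: it makes the transfer surjective on meridian submodules, hence an isomorphism $\Im\del_1\congto\Im\del_n$ for the boundary maps $H_2(M,X)\to H_1(X)$ and $H_2(M_n,X_n)\to H_1(X_n)$, and a final snake lemma identifies the two cokernels; this needs to be made explicit rather than folded into ``the meridians lift to the deck-group part.'' And for the second sequence, ``left-exactness on the nose follows because we take the sequence of modules, not of homology groups'' is not an argument --- tensoring a short exact sequence with $\Lambda_0/(\nu_{p^n})$ is precisely where a Tor obstruction would live. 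The sequence is exact if $\langle\wt{\mu_i}\rangle/\nu_{p^n}$ is read as the image of $\langle\wt{\mu_i}\rangle$ in $H_1(X_m)/\nu_{p^n}H_1(X_m)$ (third isomorphism theorem); if you insist on $\langle\wt{\mu_i}\rangle/\nu_{p^n}\langle\wt{\mu_i}\rangle$ you must separately verify $\langle\wt{\mu_i}\rangle\cap\nu_{p^n}H_1(X_m)=\nu_{p^n}\langle\wt{\mu_i}\rangle$.
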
 

\begin{proof} 
Put $G:=\Gal(h_m)$ for the subcover $h_m:X_m\to X$ of degree $p^m$. Then there is an exact sequence 
$1\to \pi_1(X_m)\to \pi_1(X)\to G \to 1$, and 
the Hochschild--Serre spectral sequence (\cite{Brown}) yields 
an exact sequence  
$H_2(G)\to H_1(\pi_1(X_m))_{G} \to H_1(\pi_1(X)) \to H_1(G)\to 0$. 
Since $G=\blue{\langle t \mid t^{p^m}\rangle}\cong \Z/p^{m}\Z$ is a finite cyclic group, 
by applying the Hurewicz isomorphism, we have 
$H_2(G)=0$, 
$H_1(\pi_1(X_m))_G=(\pi_1(X_m)^{\rm ab})_G\cong H_1(X_m)_G=H_1(X_m)/(t-1)H_1(X_m)$, 
$H_1(\pi_1(X))=\pi_1(X)^{\rm ab}\cong H_1(X)$, and $H_1(G)=G\cong \Z/p^m\Z$.  
Therefore we obtain an exact sequence $0\to (t-1)H_1(X_m)\to H_1(X_m) \to H_1(X) \to \Z/p^m\Z \to 0$. 
In a similar way, for a subcover $h_{n,m}:X_m\to X_n$, 
an exact sequence 
$1\to \pi_1(X_m)\to \pi_1(X_n)\to \Gal(h_{n,m})\to 1$ 
yields an exact sequence 
$0\to (t^{p^n}-1)H_1(X_m)\to H_1(X_m) \to H_1(X_n) \to p^n(\Z/p^m\Z) \to 0$. 
Therefore, a commutative diagram with exact rows 
$$\xymatrix{
C_*(X_m) \ar^{t^{p^n} -1}[r] &C_*(X_m) \ar^{h_{n,m*}}[r] &C_*(X_n)\ar[r] & 0\\
C_*(X_m) \ar^{t-1}[r] \ar@{=}[u] &C_*(X_m) \ar^{h_{m*}}[r] \ar^{\nu_{p^n}}[u] &C_*(X)\ar[r] \ar^{h_n^!}[u] &0
}$$
yields the following commutative diagram with exact rows 
$$\xymatrix{
H_1(X_m) \ar^{t^{p^n}-1}[r] &H_1(X_m)\ar[r] &H_1(X_n)\ar^{\wt{\tau}_n}[r]  &p^n(\Z/p^m\Z) \ar[r] &0 \\
H_1(X_m) \ar@{=}[u] \ar^{t-1}[r] &H_1(X_m) \ar[r] \ar^{\nu_{p^n}}[u] &H_1(X)\ar^{\wt{\tau}}[r] \ar^{h_n^!}[u] & \Z/p^m\Z \ar[r] \ar^{\times p^n}[u]&0. 
}$$
Since $\nu_{p^n}:(t-1)H_1(X_m)\to (t^{p^n}-1)H_1(X_m)$ is surjective, 
there is an isomorphism 
$\Coker(\nu_{p^n}:H_1(X_m)\to H_1(X_m))\cong \Coker(\nu_{p^n}: H_1(X_m)/(t-1)H_1(X_m)\to$ $H_1(X_m)/(t^{p^n}-1)H_1(X_m)$. 
\blue{By} the Snake lemma, \blue{we obtain} 
 an exact sequence $p^{m-n}\Z/p^m \Z \to H_1(X_m)/\nu_{p^n}H_1(X_m) \to H_1(X_n)/h_n^!(H_1(X))\to 0$. 
 
\blue{Now we consider the following commutative diagram with exact rows. 
$$\xymatrix{
H_2(M_n, X_n) \ar^{\del_n}[r] &H_1(X_n) \ar[r] &H_1(M_n) \ar[r] &0\\
H_2(M, X) \ar[u] \ar^{\del_1}[r] &H_1(X) \ar^{h_n^!}[u] \ar[r] &H_1(M) \ar^{h_n^!}[u] \ar[r] &0 
}$$
By the assumption of (1) that $\wt{M}$ is totally branched over $L$, the transfer map $h_n^!$ is surjective on the meridians. Hence we have an isomorphism $h_n^!: \Im \del_1\to \Im \del_n$. 
By the snake lemma, we obtain an isomorphism $H_1(X_n)/h_n^!(H_1(X))\congto H_1(M_n)/h_n^!(H_1(M))$ on the $\Coker$ of two vertical morphisms on the right side.}  

Thus we have obtained an exact sequence of $\Lambda_0$-modules. \end{proof}
%

\blue{Next, we} 
recall some facts on inverse limits. 
We say \violet{that} an inverse system $\(A_n\)_n$ \blue{of abelian groups} satisfies \emph{the Mittag-Leffler condition} (ML-condition) if 
``for any $n$ there exists some $n'$ such that for any $n''>n'$, $\Im(A_{n''}\to A_n)$ is constant''. 
If $\(A_n\)_n$ consists of finite \violet{groups}, or if its morphisms are all surjective, 
then it satisfies ML-condition. 
An inverse system $\(A_n\)_n$ is said to be \emph{ML-zero} if it satisfies a modified ML-condition in which ``constant'' is replaced by ``zero''. 
\violet{Recall $\Lambda=\Zp[[T]]$. A \emph{profinite $\Lambda$-module} is one obtained as the inverse limit of discrete $\Lambda$-modules. A \emph{homomorphism of profinite $\Lambda$-modules} is a continuous $\Lambda$-module homomorphism. It is always a closed map, because it is a continuous map from a compact space to a Hausdorff space. 
Note that profinite $\Lambda$-modules form an abelian category. By \cite{Jannsen1988} \S 1, we have the following lemma.}  


\begin{lem}\label{ML} Let $\(A_n\)_n, \(B_n\)_n, \(C_n\)_n$ be inverse systems of \violet{profinite $\Lambda$-modules.}\\ 
(1) Then, an exact sequence  $0\to A_n \to B_n \to C_n \to 0$ of inverse systems yields an exact sequence 
$0\to \varprojlim A_n \to \varprojlim B_n \to \varprojlim C_n \to \varprojlim^1 A_n$. 
If $\(A_n\)_n$ satisfies \blue{the} ML-condition, then $\varprojlim^1 A_n=0$ holds.\\ 
(2) If $\(A_n\)_n$ is ML-zero, then an exact sequence $A_n \to B_n \to C_n \to 0$ yields 
an isomorphism $\varprojlim B_n \congto \varprojlim C_n$.\\ 
(3) \violet{Suppose that $\(A_n\)_n$ is a surjective system and let $\(f_n:A_n\to A_n\)_n$ be a family of endomorphisms commutative with the system.}  
Put $\mca{A}=\varprojlim A_n$ and let $f:\mca{A}\to \mca{A}$ denote the induced map. Then $\mca{A}/f(\mca{A})\congto \varprojlim A_n/f_n(A_n)$.
\end{lem}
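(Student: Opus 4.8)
The plan is to deduce all three parts from the standard homological algebra of the inverse limit functor on systems indexed by $\N$, using the one special feature of the category of profinite $\Lambda$-modules already emphasized above: every morphism is closed and every object compact, so images of morphisms are again profinite $\Lambda$-modules and short exact sequences of inverse systems remain exact. The basic tool is the derived functor: for an $\N$-indexed inverse system $(A_n)_n$ there is an exact sequence $0\to\varprojlim A_n\to\prod_n A_n\xrightarrow{\ 1-\mathrm{shift}\ }\prod_n A_n\to\varprojlim^1 A_n\to 0$, and $\varprojlim^i$ vanishes for $i\geq 2$ since the index category has cohomological dimension one. Hence a short exact sequence $0\to A_n\to B_n\to C_n\to 0$ of inverse systems produces the long exact sequence $0\to\varprojlim A_n\to\varprojlim B_n\to\varprojlim C_n\to\varprojlim^1 A_n\to\varprojlim^1 B_n\to\varprojlim^1 C_n\to 0$, which contains the four-term sequence of (1). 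That the ML-condition forces $\varprojlim^1 A_n=0$ is the classical stable-image argument: passing from $(A_n)_n$ to the system of stable images $A_n':=\bigcap_{m\geq n}\Im(A_m\to A_n)$ changes neither $\varprojlim$ nor $\varprojlim^1$ (the comparison quotient system is ML-zero, so has vanishing $\varprojlim$ and $\varprojlim^1$), and $(A_n')_n$ has surjective transition maps, for which $1-\mathrm{shift}$ on $\prod_n A_n'$ is surjective.

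For (2), I would replace $(A_n)_n$ by the system $(A_n')_n$ with $A_n':=\Im(A_n\to B_n)=\Ker(B_n\to C_n)$, which again consists of profinite $\Lambda$-modules because the maps are closed. The surjections $A_n\surj A_n'$ make $(A_n')_n$ ML-zero, so $\varprojlim A_n'=0$, and since ML-zero implies the ML-condition we also get $\varprojlim^1 A_n'=0$; applying (1) to $0\to A_n'\to B_n\to C_n\to 0$ then yields the isomorphism $\varprojlim B_n\congto\varprojlim C_n$.

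For (3), set $K_n=\Ker f_n$, $I_n=f_n(A_n)$, and $Q_n=A_n/I_n=\Coker f_n$, and split the exact sequence $0\to K_n\to A_n\xrightarrow{f_n}A_n\to Q_n\to 0$ into $0\to I_n\to A_n\to Q_n\to 0$. Since $(A_n)_n$ is surjective and each $f_n$ commutes with the transition maps, both $(I_n)_n$ and $(Q_n)_n$ inherit surjective transition maps, hence satisfy the ML-condition, so (1) gives $\varprojlim Q_n\cong\mca{A}/\varprojlim I_n$. It then remains to identify $\varprojlim I_n$ with $f(\mca{A})$ inside $\mca{A}=\varprojlim A_n$: the inclusion $f(\mca{A})\subseteq\varprojlim I_n$ is immediate, and conversely, given $(x_n)_n\in\varprojlim I_n$, the fibres $f_n^{-1}(x_n)\subseteq A_n$ are nonempty closed, hence compact, subsets that form an inverse system, so $\varprojlim f_n^{-1}(x_n)\neq\emptyset$ and any point of it is an $(a_n)_n\in\mca{A}$ with $f((a_n)_n)=(x_n)_n$. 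This gives $\mca{A}/f(\mca{A})\cong\varprojlim Q_n=\varprojlim A_n/f_n(A_n)$.

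The step I expect to be the main obstacle is the last identification in (3): lifting a compatible family $(x_n)_n$ with each $x_n\in\Im f_n$ to a compatible family in the domain genuinely uses compactness, namely that an inverse limit of nonempty compact sets is nonempty, and it can fail for inverse systems of arbitrary $\Lambda$-modules. This is precisely where it matters that we work in the category of profinite $\Lambda$-modules, where morphisms are closed and objects are compact; the rest of the argument is bookkeeping with the ML-condition and the four-term $\varprojlim$-sequence.
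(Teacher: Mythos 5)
Your proposal is correct and follows essentially the same route as the paper: parts (1) and (2) are the standard $\varprojlim^1$ facts (which the paper simply cites from Jannsen), and part (3) is proved by the same reduction to the short exact sequence $0\to f_n(A_n)\to A_n\to A_n/f_n(A_n)\to 0$ together with the identification $\varprojlim f_n(A_n)=f(\mca{A})$. The only (cosmetic) difference is in that last identification: you invoke nonemptiness of an inverse limit of nonempty compact fibres, while the paper notes that $f(\mca{A})\inj\varprojlim f_n(A_n)$ is a closed map with dense image — both are the same compactness input, correctly placed at the one step where profiniteness is genuinely needed.
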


\begin{proof} 
\violet{Here we prove (3). Since each $f(\mca{A})\to f_n(A_n)$ is a surjection, 
the inclusion map $\iota:f(\mca{A})\inj \varprojlim_n f_n(A_n)$ is a continuous homomorphism with dense image. Since $\iota$ is a closed map, $\iota$ is an isomorphism and we have $f(\mca{A})= \varprojlim_n f_n(A_n)$.} 
Now consider \violet{exact sequences} $0\to f_n(A_n) \to A_n \to A_n/f_n(A_n)\to 0$ compatible with the inverse system. Since $\(f_n(A_n)\)_n$ is a surjective system, (1) yields an exact sequence  $0\to \varprojlim_n f_n(A_n) \to \mca{A} \to \varprojlim_n A_n/f_n(A_n)\to 0$. Thus we obtain $\mca{A}/f(\mca{A})\congto \varprojlim A_n/f_n(A_n)$. 
\end{proof}

\blue{Now we give a proof of Proposition 4.11 (1):}  

\begin{proof}[Proposition \ref{Sakuma} \blue{(1)}, Sakuma's exact sequence]　
We consider the inverse systems with respect to $m$ in Lemma \ref{lem-mn}. 
In the first exact sequence, the \red{transition morphisms on} the first terms $L_m:=p^{m-n}\Z/p^m\Z \red{\cong \langle t^{p^{m-n}} | t^{p^m}\rangle}$ are 
\red{ $\{ \mod p^m: L_{m+1}\to L_m \}_m$. 
Hence} 
if $m'>\red{m+n},$ then $L_{m'}\to L_m$ is the zero map. 
Thus the first term is ML-zero, 
\red{and Lemma \ref{ML} (2) yields} 
an isomorphism $\varprojlim_m (H_1(X_m)_{[p]}/\blue{\nu_{p^n}} H_1(X_m)_{[p]})$ $\red{\congto}  H_1(\blue{M_n})_{[p]}/h_n^!(H_1(M)_{[p]})$. 

In the second exact sequence in Lemma \ref{lem-mn},  
since the 
\red{transition maps on} the finite $p$-torsion \red{abelian} groups $\langle\mu_i\rangle/\nu_{p^n}$ are the multiplication by $p$, 
\red{these terms} also satisfy ML-zero\blue{. Therefore by} 
Lemma \ref{ML} (2), there is an isomorphism 
$\varprojlim_m(H_1(X_m)_{[p]}/\nu_{p^n} H_1(X_m)_{[p]})\cong \varprojlim_m(H_1(M_m)_{[p]}/\nu_{p^n} H_1(M_m)_{[p]})$. 
\blue{Since $\(H_1(M_m)_{[p]}\)_m$ is a surjective system of pro-$p$ \red{abelian} groups, 
Lemma \ref{ML} (3) yields an isomorphism 
$\varprojlim_m(H_1(M_m)_{[p]}/\nu_{p^n} H_1(M_m)_{[p]})\cong \mca{H}/\nu_{p^n}\mca{H}$.   
}
\end{proof} 



\subsection{Remarks on non-$\Q$HS$^3$ cases}

\blue{In this subsection, we briefly discuss further generalization of Iwasawa type formula for non-$\Q$HS$^3$ cases.} %



\begin{lem} (1) Let $E\sim \oplus_i \Lambda/(p_i^{m_i}) \oplus \oplus_j \Lambda/(f_j^{e_j})$ be a pseudo isomorphism from a finitely generated torsion $\Lambda$-module to the normal form. Then $E/\nu_{p^n}E$ is a infinite group if and only if $f_i$ are not $p^{n'}$-th cyclotomic polynomial for any $\red{1\leq} n'\leq n$.\\
(2) Let $f$ be \red{a $p$-power-th cyclotomic polynomial in $\Z[t]$}, which is an irreducible Weierstrass polynomial in $\Zp[[T]]$, and put $E=\Lambda/(f^e)$. Then the following are equivalent: 
that $E/\nu_{p^n}E$ is constant for $n\gg 0$, that $(f^e, \nu_{p^n})$ is constant for  $n\gg 0$, and that $e=1$. \end{lem}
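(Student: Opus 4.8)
The plan is to reduce both parts to divisibility bookkeeping in the unique factorization domain $\Lambda=\Zp[[T]]$, exploiting repeatedly that, under $1+T\LR t$, one has $\nu_{p^n}=\prod_{n'=1}^{n}\Phi_{p^{n'}}(1+T)$, a product of pairwise non-associate irreducible Weierstrass polynomials, each lying in the maximal ideal $\m=(p,T)$ and coprime to $p$ (indeed $\nu_{p^n}\equiv T^{p^n-1}\bmod p$), together with $\nu_{p^{n+1}}=\nu_{p^n}\cdot\Phi_{p^{n+1}}(1+T)$. I shall also use the standard consequence of the structure theorem (Lemma~\ref{Lambda}) that, for a finitely generated torsion $\Lambda$-module $M$ whose characteristic ideal is generated by $g$, the quotient $M/hM$ is finite if and only if $g$ and $h$ are coprime; since this property is a pseudo-isomorphism invariant --- a pseudo-isomorphism $E\to E'$ induces, via the free resolution $0\to\Lambda\xrightarrow{\nu_{p^n}}\Lambda\to\Lambda/(\nu_{p^n})\to 0$, a map $E/\nu_{p^n}E\to E'/\nu_{p^n}E'$ with finite kernel and cokernel --- in part~(1) I may replace $E$ by its normal form.

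For part~(1) I compute $E/\nu_{p^n}E$ summand by summand on $\bigoplus_j\Lambda/(p^{m_j})\oplus\bigoplus_i\Lambda/(f_i^{e_i})$. Since $\nu_{p^n}$ is a Weierstrass polynomial of degree $p^n-1$, the ring $\Lambda/(\nu_{p^n})$ is a free $\Zp$-module of rank $p^n-1$, so each $\Lambda/(p^{m_j},\nu_{p^n})\cong(\Z/p^{m_j})^{\,p^n-1}$ is finite. For the remaining summands, $\Lambda/(f_i^{e_i},\nu_{p^n})$ is finite if and only if $f_i^{e_i}$ and $\nu_{p^n}$ share no irreducible factor, that is --- $f_i$ being irreducible and $\nu_{p^n}$ squarefree --- if and only if $f_i$ is none of $\Phi_{p}(1+T),\dots,\Phi_{p^n}(1+T)$; in the opposite case $\Lambda/(f_i^{e_i},\nu_{p^n})$ surjects onto the infinite ring $\Lambda/(f_i)\cong\Zp[\zeta_{p^{n'}}]$. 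Hence $E/\nu_{p^n}E$ is finite exactly when no $f_i$ is a $p$-power-th cyclotomic polynomial $\Phi_{p^{n'}}(1+T)$ with $1\le n'\le n$; equivalently, it is infinite exactly when some $f_i$ is such a polynomial.

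For part~(2), write $f=\Phi_{p^{n_0}}(1+T)$ and $E=\Lambda/(f^e)$, so that $E/\nu_{p^n}E=\Lambda/(f^e,\nu_{p^n})$. With respect to the natural surjections $\Lambda/(f^e,\nu_{p^{n+1}})\twoheadrightarrow\Lambda/(f^e,\nu_{p^n})$ the assertions ``$E/\nu_{p^n}E$ is constant for $n\gg0$'' and ``$(f^e,\nu_{p^n})$ is constant for $n\gg0$'' say the same thing, so it remains to show each is equivalent to $e=1$. If $e=1$, then $f\mid\nu_{p^n}$ for $n\ge n_0$, whence $(f,\nu_{p^n})=(f)$ is constant. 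If $e\ge2$, I claim $(f^e,\nu_{p^{n+1}})\subsetneq(f^e,\nu_{p^n})$ for \emph{every} $n$: an equality would give $\nu_{p^n}=af^e+b\nu_{p^n}\Phi_{p^{n+1}}(1+T)$ with $a,b\in\Lambda$, hence $\nu_{p^n}\bigl(1-b\Phi_{p^{n+1}}(1+T)\bigr)=af^e$; as $\Phi_{p^{n+1}}(1+T)\in\m$, the factor $1-b\Phi_{p^{n+1}}(1+T)$ is a unit of $\Lambda$, forcing $f^e\mid\nu_{p^n}$ and contradicting the squarefreeness of $\nu_{p^n}$ since $e\ge2$. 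Thus the ideals $(f^e,\nu_{p^n})$ strictly decrease for all $n$, so neither they nor $E/\nu_{p^n}E$ is eventually constant. (If one reads ``constant'' as ``abstractly isomorphic'', add that a surjection $\Lambda/J'\twoheadrightarrow\Lambda/J$ of Noetherian $\Lambda$-modules with $J'\subsetneq J$ has nonzero kernel and hence is not an isomorphism, because a surjective endomorphism of a Noetherian module is injective.)

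I expect the only genuine subtlety to lie in part~(2): the quotients $\Lambda/(f^e,\nu_{p^n})$ are infinite once $n\ge n_0$, so ``constant'' has to be read with respect to the natural restriction maps, and one must verify that the ideals $(f^e,\nu_{p^n})$ fail to stabilize at \emph{every} step rather than merely infinitely often --- the unit-cofactor argument, resting on $\Phi_{p^{n+1}}(1+T)\in\m$, is exactly what makes this uniform. The pseudo-isomorphism reduction and the Weierstrass-division description of $\Lambda/(p^{m},\nu_{p^n})$ in part~(1) are routine.
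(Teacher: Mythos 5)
Your part (1) is essentially the paper's argument: reduce to the normal form via the pseudo-isomorphism, observe that $\Lambda/(p^{m_j},\nu_{p^n})$ is always finite, and that $\Lambda/(f_i^{e_i},\nu_{p^n})$ is finite precisely when $f_i^{e_i}$ and $\nu_{p^n}$ are coprime, i.e.\ when $f_i$ is not $\Phi_{p^{n'}}(1+T)$ for some $1\le n'\le n$; you are slightly more careful than the paper in justifying the pseudo-isomorphism invariance and in computing $\Lambda/(p^{m},\nu_{p^n})\cong(\Z/p^{m})^{p^n-1}$ by Weierstrass division. Part (2), however, takes a genuinely different route. The paper factors $(f^e,\nu_{p^n})=(f)(f^{e-1},\nu_{p^n}/f)$, passes to $\Lambda/(f)\cong\Zp[\zeta]$ via the roots of $f$ in $\ol{\Z}_p$, and detects the strict decrease of the ideals $(\nu_{p^{n'}}/f)$ from the fact that $\Phi_{p^{n'+1}}(\alpha)$ is a non-unit; you instead work entirely inside the local ring $\Lambda$: from a putative equality $(f^e,\nu_{p^{n+1}})=(f^e,\nu_{p^n})$ you write $\nu_{p^n}(1-b\Phi_{p^{n+1}}(1+T))=af^e$, note that $\Phi_{p^{n+1}}(1+T)\in\m$ makes the cofactor a unit, and contradict the squarefreeness of $\nu_{p^n}$ when $e\ge 2$. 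Both arguments are valid; yours buys a uniform statement (strict decrease at \emph{every} level $n$, not just for $n\gg 0$), avoids the extension $\Zp[\zeta]$ altogether, and sidesteps the paper's slightly off valuation bookkeeping (the paper asserts $(\Phi_{p^{n'+1}}(\alpha))=(p^{n'+1-n})$ where it should be $(p)$, though only $\ne(1)$ is needed), while the paper's version makes the structure of $\Lambda/(f^e,\nu_{p^n})$ as a module over the discrete valuation ring $\Zp[\zeta]$ more transparent. Your closing remark handling the reading of ``constant'' as abstract isomorphism, via the fact that a surjective endomorphism of a Noetherian module is injective, is a worthwhile addition that the paper leaves implicit.
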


\begin{proof} Note that $\nu_{p^n}=\nu_{p^n}(t)=(t^{p^n}-1)/(t-1)=\nu_{p^n}(1+T)$ in $\Z[t]\subset \Zp[[T]]; t\mapsto 1+T$ is the product of all the $p^{n'}$-th cyclotomic polynomials for $1\leq n'\leq n$. 

(1) A module of the form $(\Lambda/(p^{m_i}))/\nu_{p^n}(\Lambda/(p^{m_i}))$ is finite in any case. 
By a general fact that monic polynomials $g,h\in \Z[t]$ with positive degrees satisfy $\#\Z[t]/(g,h)<\infty$ if and only if they have no common divisor, 
a module of the form $(\Lambda/(f_j^{e^j}))/\nu_{p^n}(\Lambda/(f_j^{e^j}))=\Lambda/(f_j^{e_j}, \nu_{p^n})$ is finite if and only if the monic polynomials $f_j^{e_j}$ and $\nu_{p^n}$ have no common divisor, that is, $f_j$ is not the $p^{n'}$-th cyclotomic polynomial for $1\leq n'\leq n$. 

(2) Let $f$ be the $p^n$-th cyclotomic polynomial in $\Z[t]\subset \Zp[[T]]$. 
Then $(f,\nu_{p^{n'}})=(f)$ holds for any $n'\geq n$. 
We suppose both $e>1$ and that $E/\nu_{p^{n'}}E$ is constant for $n'\gg 0$, and lead a contradiction. 
By these hypothesises, we have $(f^e, \nu_{p^n})=(f)(f^{e-1}, \nu_{p^n}/f)$, 
and there is some $n'\geq n$ with $(f^{e-1}, \nu_{p^{n'}}/f)=(f^{e-1}, \nu_{p^{n'+1}}/f)$. 
By taking $\mod f$, we have 
$(\nu_{p^{n'}}/f) = (\nu_{p^{n'+1}}/f)$ in $\Lambda/(f)$.  
Let $\alpha=\alpha_1,\ldots,\alpha_{\blue{\varphi(p^n)}}$ denote the roots of $f$ in an algebraic closure $\ol{\Z}_p$ of $\Zp$, where \blue{$\varphi(n)$} is the Euler function. 
Then we have a standard injective homomorphism $\Lambda/(f)\inj \ol{\Z}_p^{\blue{\varphi(p^n)}}$, 
and $((\nu_{p^{n'}}/f)(\alpha))= ((\nu_{p^{n'+1}}/f)(\alpha))$ in $\ol{\Z}_p$.  
However, the $p^{n'+1}$-th cyclotomic polynomial $q_{p^{n'+1}}$ satisfies $\nu_{p^{n'+1}}/f=q_{p^{n'+1}}\nu_{p^{n'}}/f$ and $(q_{p^{n'+1}}(\alpha))=(p^{n'+1-n})\neq (1)$. 
Thus we have $((\nu_{p^{n'}}/f)(\alpha))\neq ((\nu_{p^{n'+1}}/f)(\alpha))$, and hence contradiction. 
\end{proof}

\begin{thm} 
Let $\wt{M}$ be a branched $\Zp$-cover of 3-manifolds, 
\blue{and suppose that $\mca{H}=\varprojlim H_1(M_n)_{[p]}$ is a finitely generated torsion $\Lambda$-module.} Then\\ 
\noindent 
(1) The $n$-th cover $M_n$ is $\Q$HS$^3$ if and only if $M$ is a $\Q$HS$^3$ and the \blue{characteristic} polynomial $char_{\mca{H}}$ of $\mca{H}$ is not divided by $p^{n'}$-th cyclotomic polynomial for $n'\leq n$. \\
(2) If every $p$-power-th cyclotomic polynomial contained in  $char_{\mca{H}}$ is of exponent 1, then the $p$-torsion subgroups  $H_1(M_n)_{[p]}$ satisfy the Iwasawa type formula. \end{thm}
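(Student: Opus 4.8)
The plan is to reduce both assertions to properties of the $\Lambda$-module $\mca H=\varprojlim H_1(M_n)_{[p]}$ and then invoke the preceding lemma together with the structure theorem (Lemma~\ref{Lambda}). First I would note that for each $M_n$ (and for $M=M_0$) one has $H_1(M_n)_{[p]}=H_1(M_n)\otimes\Zp$, so this group is finite if and only if the first Betti number of $M_n$ vanishes, i.e. if and only if $M_n$ is a $\Q$HS$^3$. From the covering transfer $h_n^{!}\colon H_1(M)\to H_1(M_n)$ one has $h_{n*}\circ h_n^{!}=p^n$, hence $p^nH_1(M)\subseteq\Im(h_{n*})$; applied to every pair $M_n\to M_j$ this gives $b_1(M_j)\le b_1(M_n)$ for $j\le n$, so that $M_n$ being a $\Q$HS$^3$ forces all of $M_0=M,\dots,M_n$ to be $\Q$HS$^3$. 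In particular, in proving the equivalence in (1) I may assume $M$ is a $\Q$HS$^3$; then $\pi_1(M)^{\rm ab}$ is finite, which forces the defining homomorphism $\tau\colon\pi_1(X)\to\Zp$ to send some meridian to a unit of $\Zp$, i.e. $\wt M$ is properly branched, so Sakuma's exact sequence (Proposition~\ref{Sakuma}) is available.

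The core of (1) is the equivalence, for $n\gg0$, between finiteness of $H_1(M_n)_{[p]}$ and finiteness of $\mca H/\nu_{p^n}\mca H$. When $\wt M$ is totally branched over $L$ this is immediate from the surjection $H_1(M_n)_{[p]}\surj\mca H/\nu_{p^n}\mca H$ of Proposition~\ref{Sakuma}(1), whose kernel is the image of the finite group $H_1(M)_{[p]}$; in the merely properly branched case one runs the same argument from a layer $M_{n_0}$ and combines it with the layerwise Hochschild--Serre sequences of Lemma~\ref{lem-mn}, checking that the cyclotomic divisors of $char_{\mca H}$ over the fixed $\Lambda=\Zp[[\Gamma]]$ still govern every $n$. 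Granting this, the preceding lemma finishes (1): writing the normal form $\mca H\sim\bigoplus_i\Lambda/(p^{m_i})\oplus\bigoplus_j\Lambda/(f_j^{e_j})$, each $\Lambda/(p^{m})/\nu_{p^n}$ is finite, while $\Lambda/(f^{e})/\nu_{p^n}$ is finite exactly when $f$ is prime to $\nu_{p^n}=\prod_{1\le n'\le n}q_{p^{n'}}$, that is, when $f$ is none of the $p^{n'}$-th cyclotomic polynomials $q_{p^{n'}}$ with $n'\le n$; hence $\mca H/\nu_{p^n}\mca H$ is finite iff $char_{\mca H}$ is divisible by no such $q_{p^{n'}}$, which together with the reduction ``$M$ is a $\Q$HS$^3$'' is precisely~(1).

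For (2) I would carry the same machine one step further, keeping track of orders rather than mere finiteness. Assuming every $p$-power cyclotomic factor of $char_{\mca H}$ occurs with exponent $1$, split the normal form as $E_{\rm tor}\oplus E_{\rm cyc}$, where $E_{\rm cyc}=\bigoplus_j\Lambda/(q_{p^{n'_j}})$ collects the cyclotomic summands and $E_{\rm tor}=\bigoplus_i\Lambda/(p^{m_i})\oplus\bigoplus_j\Lambda/(g_j^{e_j})$ collects the rest (with $g_j$ distinguished and non-cyclotomic). By the preceding lemma, each summand $\Lambda/(q_{p^{n'_j}})/\nu_{p^n}$ stabilises, for $n$ large, to the fixed torsion-free module $\Lambda/(q_{p^{n'_j}})$, so $E_{\rm cyc}$ contributes only a constant free part of rank $\sum_j\varphi(p^{n'_j})$; and the usual Iwasawa estimate (as in the proof of Lemma~\ref{Lambda}(3)) gives $\#\bigl(E_{\rm tor}/\nu_{p^n}E_{\rm tor}\bigr)=p^{\lambda n+\mu p^n+\nu}$ for $n\gg0$, with $\lambda=\sum_j e_j\deg g_j$ and $\mu=\sum_i m_i$. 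Transporting this through Sakuma's exact sequence and the pseudo-isomorphism, both of which perturb only by a bounded finite amount and a bounded free rank, shows that for $n\gg0$ the first Betti number of $M_n$ is constant and the $p$-torsion part of $H_1(M_n)_{[p]}$ has order $p^{\lambda n+\mu p^n+\nu}$, i.e. it obeys the Iwasawa type formula (Theorem~\ref{ITF}) with these $\lambda,\mu,\nu$.

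The step I expect to be the main obstacle is exactly the bridge in the middle of (1) and (2): passing rigorously from the layerwise Hochschild--Serre/Sakuma data to a statement purely about $\mca H$ over the fixed $\Lambda$ when $\wt M$ is not totally branched. There one must show that the index shift by $n_0=\max_i v_p(\tau(\mu_i))$ inherent in Proposition~\ref{Sakuma}(2) does not move the set of $p$-power cyclotomic divisors of the characteristic ideal, and that the several finite error terms — the image of $H_1(M_{n_0})_{[p]}$, the kernel and cokernel of the pseudo-isomorphism, and the $\varprojlim^1$-type corrections — are bounded uniformly in $n$, so that they are absorbed into $\nu$. A secondary point requiring care, once $M_n$ need no longer be a $\Q$HS$^3$, is the bookkeeping inside $H_1(M_n)_{[p]}=H_1(M_n)\otimes\Zp$: one must cleanly separate its eventually-constant free summand (coming from $E_{\rm cyc}$) from its genuinely $p$-torsion summand, so that it is unambiguous which group is asserted to satisfy the Iwasawa type formula.
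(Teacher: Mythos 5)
Your proposal follows essentially the same route as the paper: reduce via Sakuma's exact sequence to finiteness of $\mca{H}/\nu_{p^n}\mca{H}$, pass to the normal form, and observe that the $p$-power cyclotomic factors are exactly the source of free $\Zp$-summands (handled by the preceding lemma), while the exponent-one hypothesis makes those factors stabilise so that the remaining summands yield the Iwasawa type formula. Your treatment is in fact more careful than the paper's three-sentence proof on the points you flag (the transfer argument forcing lower layers to be $\Q$HS$^3$, and the $n_0$-shift in the non-totally-branched case), but the underlying argument is the same.
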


\begin{proof} 
Let $\mca{H}\sim \oplus_i \Lambda/(p_i^{m_i}) \oplus \oplus_j \Lambda/(f_j^{e_j})$ be a pseudo isomorphism to a normal form, 
and put $E_j:=\Lambda/(f_j^{e_j})$ for each $j$. 
Then $E_j/\nu_{p^n}E_j$ contains a free $\Zp$-module if and only if 
$f_j$ is the $p^{n'}$-th cyclotomic polynomial for some $n'\leq n$. 
If $E_j/\nu_{p^n}E_j$ is constant for $n\gg 0$ for every $j$ such that $f_j$ is a $p$-power-th cyclotomic polynomial, then by considering the direct sum of factors which do not correspond to such $j$'s, we have the same augment as before. Therefore the previous lemma yields this theorem.
\end{proof}
\begin{exa} 
\blue{Let $M$ be a $\Q$HS$^3$, let $L=\sqcup K_i$ be a link with null homologous $d$-components in $M$, put $X=M-L$, and let $\mu_i\in H_1(X)$ denote the meridian of $K_i$. 
Let $v_1,...,v_d$ be units of $\Zp$, let $\tau: H_1(X)\to \Zp$ be a homomorphism defined by $\mu_i \mapsto v_i$, and let $\wt{M}$ be the branched $\Zp$-cover defined by $\tau$. 
Then $\wt{M}$ is totally branched over $L$. 
Let $\Delta_{L,\tau}(t)=\Delta_{L}(t^{v_1},\ldots,t^{v_d})$ denote the $p$-adic reduced Alexander polynomial in $\Zp[[\wh{\langle t\rangle}]]$. 
Since we can approximate $\wt{M}$ by $\Z$-covers as much as we can, 
from a well-known fact for $\Z$-covers and \cite{KM2008} Theorem 3.3 (a variant of the Mayberry--Murasugi formula \cite{MM1982} or Porti's result \cite{Porti2004}), 
we can easily deduce isomorphisms $\mca{H}\congto \Lambda/(\Delta_{L,\tau}(t))$ and  $H_1(M_n)/h_n^{!}(H_1(M))_{[p]}\congto \mca{H}/\nu_{p^n}\mca{H}$, and 
$||H_1(M_n)/h_n^{!}(H_1(M))||_p=\prod_{\zeta^n=1} |\Delta_{L,\tau}(\zeta))|_p$. 
(For a group $G$, we put $|G|=\#G$ or zero 
according as $G$ is finite or infinite.  For a number $x$, $|x|_p$ denotes the $p$-adic norm.)}



Typical examples with cyclotomic Alexander polynomial is torus knots.  
\blue{Only for here, the subscription $n$ means the covering degree over $S^3$. 
If $L=K$ is the trefoil in $M=S^3$,}  
then $\Delta_K(t)=t^2-t+1$ is the 6-th cyclotomic polynomial, and 
$H_1(M_6)=\Z^2$, $H_1(M_3)=(\Z/2\Z)^2$, $H_1(M_2)=\Z/3\Z$ are known. 
For 3-fold cover $M_6\to M_2$ and double-cover $M_6\to M_3$, 
the maps on $H_1$ are not surjections on 3-torsions and 2-torsions respectively. However, for $n>0$, $M_{2\cdot 3^{n+1}}\to M_{2\cdot 3^n}$ and $M_{3\cdot 2^{n+1}}\to M_{3\cdot 2^n}$ induce surjections on 3-torsions and 2-torsions respectively, and satisfy the Iwasawa type formula with trivial invariants. If we take the connected sum with the trefoil and the figure eight knot, then we obtain a non-trivial example. \end{exa}

\begin{rem}  
\blue{For a branched $\Z$-cover over a link $L$ in $S^3$, we have the balance formula among 
\emph{the $p$-adic Mahler measure} of the Alexander polynomial, the 
the Iwasawa $\mu$-invariant, and the $p$-adic entropy (\cite{Ueki4}).} 

\blue{We expect further study (i) with use of higher Alexander polynomial (\cite{SW2002M}, \cite{Le2014}), (ii) for graph-branched cases (\cite{Porti2004}), (iii) about the asymptotic formula related to hyperbolic volumes (\cite{BV2013}, \cite{Le2014}).} 

\end{rem}

\section{
\bred{Morphisms} of branched $\Zp$-covers}

In this section, we introduce an analogue object of an extension of $\Zp$-fields, 
and discuss a condition for $\mu=0$. 
We also define \emph{chains} of branched $\Zp$-covers by the direct limits with respect to the transfer maps, and calculate Tate cohomologies for 
\bred{an equivariant Galois morphism} of branched $\Zp$-covers. 

\subsection{Extensions of $\Zp$-fields}
Let $k_\infty/k$ be a $\Zp$-extension over a finite number field 
and let $F_\infty/k_\infty$ be a $p$-extension. 
Then there is  a $p$-extension $F/k$ such that $F_\infty/F$ is a $\Zp$-extension and $F_n=Fk_n$ for each $n$. 
In such a situation, behaviors of the Iwasawa invariants are studied (\cite{Iwasawa1973}, \cite{Iwasawa1981}). 

Recall that for a $\Zp$-extension $k_\infty/k$, the value of $\lambda$ and the property whether $\mu=0$ or not are independent of the choice of base field $k$, while the values of $\mu$ and $\nu$ depend. 
Let $k_1$ the 1st middle field of $k_\infty/k$. Then a $\Zp$-extension $k_\infty/k_1$ has $\mu$ and $\nu$ different from those of $k_\infty/k$. 
In this case, if we put $F_n:=k_{n+1}$, then we have $F_n=Fk_n$. 
However, we do not assume such a case as an essential extension of $\Zp$-fields. 

\subsection{
\bred{Morphisms of branched $\Zp$-covers}} 
We define an analogue notion of an extension of $\Zp$-fields \blue{as follows:}  

\begin{dfn}[branched cover of branched $\Zp$-covers] 
\blue{Let $\wt{M}$ and $\wt{N\,}\!$ be branched $\Zp$-covers over $(M,L)$ and $(N,L')$ respectively. Then \bred{a \emph{morphism} $f:\wt{N\,}\!\to \wt{M}$ of branched $\Zp$-covers} is a compatible system of branched covers on each layer, that is, 
a family $\( f_n:N_n \to M_n\)_n$ of branched covers commutative with the \red{transition maps.} 
If 
every $f_n$ is Galois, 
\red{then} we say this 
\bred{morphism} is \emph{Galois}. We put $G_n=\Gal(f_n)$. If all the induced maps $G_n\to G_0$ are isomorphisms, then we call this 
\bred{morphism \emph{equivariant Galois}, or} a \emph{branched Galois cover of branched $\Zp$-covers}, and we write $\Gal(f)=G_0$.}
\end{dfn}


\blue{
Let $f:\wt{M}\to \wt{N\,}\!$ be a 
\bred{morphism of branched $\Zp$-covers with notation} as above. 
We can easily check the following facts by diagram chasing, using Proposition 4.8:  
If $\wt{M}$ and $\wt{N\,}\!$ are properly branched over $L$ and $L'$, then $L'=f_0^{-1}(L)$. 
Let $\ol{S}_n\subset M_n$ denote the branch link of $f_n$. 
If $f$ is \bred{equivariant Galois}, 
then we have $\ol{S}_n\subset h_n^{-1}(\ol{S}_0)$.}  
\violet{If in addition if $\ol{S}_0$ is unbranched in $\wt{M}$, then $\ol{S}_n=h_n^{-1}(\ol{S}_0)$ and $L\cap \ol{S}_0=\phi$.}  
\blue{
We denote the restrictions to the exteriors of the preimages of \violet{$L\cup \ol{S}_0$} by the same letters 
$h_n:X_n\to X$, $h_n':Y_n\to Y$, and $f_n: X_n\to Y_n$ for each $n$.
}

\begin{prop} \blue{Let $\wt{M}$ and $\wt{N\,}\!$ be branched $\Zp$-covers with notation as above, and let $f_0:N\to M$ be a branched cover. Then}
taking a 
\bred{morphism $f:\wt{M}\to \wt{N\,}\!$} is equivalent to taking a homomorphism $\iota:\Zp\to \Zp$ which commutes with the defining homomorphism $\tau,\tau'$ and $f_{0*}:\pi_1(Y)\to \pi_1(X)$. 
\bred{Suppose $f$ is Galois. Then} 
the natural maps $G_n\to G$ are isomorphisms if and only if corresponding $\iota:\Zp\to \Zp$ is an isomorphism. 
\bred{In other words,  $f$ is equivariant Galois if and only if $f$ is $\varprojlim\Gal(h_n)=\Zp$-equivariant.} \end{prop}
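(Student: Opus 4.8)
The plan is to run a diagram chase with the defining exact sequences of the two branched $\Zp$-covers, in direct parallel with the proof of the equivalence of the two descriptions of an isomorphism of branched $\Zp$-covers given above. Write $\tau_n\colon\pi_1(X)\surj\Z/p^n\Z$ and $\tau'_n\colon\pi_1(Y)\surj\Z/p^n\Z$ for the reductions mod $p^n$ of the defining homomorphisms, so that $\pi_1(X_n)=\Ker\tau_n$ and $\pi_1(Y_n)=\Ker\tau'_n$. Since the preimages of the branch link have been deleted from both exteriors, $f_0\colon Y\to X$ is an honest finite covering; hence $f_{0*}\colon\pi_1(Y)\to\pi_1(X)$ is injective, and I will freely regard $\pi_1(Y)$ and $\pi_1(Y_n)$ as finite-index subgroups of $\pi_1(X)$.

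First I would establish the correspondence between morphisms and homomorphisms $\iota$. By the lifting criterion for covering spaces (with base points pinning down the lift), a morphism $(f_n\colon N_n\to M_n)_n$ amounts to a compatible family of lifts $f_n\colon Y_n\to X_n$ of $f_0$, which exists exactly when $f_{0*}(\pi_1(Y_n))\subseteq\pi_1(X_n)$ for all $n$, i.e. $\Ker\tau'_n\subseteq\Ker(\tau_n\circ f_{0*})$; when this holds, $\tau_n\circ f_{0*}$ factors uniquely through the surjection $\tau'_n$ as $\iota_n\circ\tau'_n$ for a homomorphism $\iota_n\colon\Z/p^n\Z\to\Z/p^n\Z$. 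Because the $\tau_n,\tau'_n$ are themselves reductions of $\tau,\tau'$, the $\iota_n$ automatically form a compatible system, so passing to the inverse limit yields $\iota\colon\Zp\to\Zp$ with $\iota\circ\tau'=\tau\circ f_{0*}$; conversely, starting from such an $\iota$, reducing mod $p^n$ and applying Fox completion recovers the morphism $(f_n\colon N_n\to M_n)_n$. This gives the first assertion.

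Next I would treat the Galois part. Assume every $f_n$ is Galois, so $\pi_1(Y_n)$ is normal in $\pi_1(X_n)$ with $G_n=\pi_1(X_n)/\pi_1(Y_n)$, while $G_0=\pi_1(X)/\pi_1(Y)$, and the natural map $G_n\to G_0$ is induced by the inclusion $\pi_1(X_n)\inj\pi_1(X)$. Its kernel is $\bigl(\pi_1(X_n)\cap\pi_1(Y)\bigr)/\pi_1(Y_n)$, and the identity $\tau_n\circ f_{0*}=\iota_n\circ\tau'_n$ (with $\tau'_n$ surjective) gives $\pi_1(X_n)\cap\pi_1(Y)=\Ker(\iota_n\circ\tau'_n)$, which equals $\pi_1(Y_n)$ iff $\iota_n$ is injective; its image is $\pi_1(X_n)\pi_1(Y)/\pi_1(Y)$, which under the identification $\pi_1(X)/\pi_1(X_n)\cong\Z/p^n\Z$ by $\tau_n$ corresponds to $\Im\iota_n$, so $G_n\to G_0$ is surjective iff $\iota_n$ is surjective. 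Hence $G_n\to G_0$ is an isomorphism for every $n$ if and only if every $\iota_n$ is an automorphism of $\Z/p^n\Z$, equivalently $\iota$ reduces to a unit mod $p$, equivalently $\iota\in\mathrm{Aut}(\Zp)$. Finally, unwinding deck transformations shows that the homomorphism on deck groups $\Gal(h'_n)\to\Gal(h_n)$ induced by $f_n$ is, under the $\tau'_n$- and $\tau_n$-identifications with $\Z/p^n\Z$, exactly $\iota_n$; therefore $f$ intertwines the action of $\varprojlim\Gal(h'_n)$ on $\wt{N\,}\!$ with that of $\varprojlim\Gal(h_n)$ on $\wt{M}$ through $\iota$, which is what it means for $f$ to be $\varprojlim\Gal(h_n)\cong\Zp$-equivariant, and this yields the reformulation.

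I do not expect any genuine difficulty here; the work is essentially bookkeeping. The points that need care are: verifying that $f_{0*}$ is injective, which requires that the branch link really has been removed from both exteriors; keeping straight which of $\pi_1(Y_n)\le\pi_1(Y)\le\pi_1(X)$ and $\pi_1(X_n)\le\pi_1(X)$ appears in the kernel and image computations; and reading ``equivariant'' in the last clause as equivariance up to the reparametrizing automorphism $\iota$ of $\Zp$ --- the deck groups $\Gal(h_n)$, $\Gal(h'_n)$ being only abstractly identified with $\Z/p^n\Z$ --- rather than as literal equivariance for a prescribed topological generator.
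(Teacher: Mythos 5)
Your argument is correct and follows essentially the same route as the paper: the same commutative diagram of fundamental-group exact sequences with vertical maps $f_{n*}$, $f_{0*}$, $\iota_n$, with the kernel/cokernel computation for $G_n\to G_0$ done by hand where the paper simply invokes the snake lemma (which, as you note, also needs the injectivity of $f_{0*}$ coming from $f_0\colon Y\to X$ being an honest covering). Your explicit reading of the final clause as equivariance up to the reparametrization $\iota\in\mathrm{Aut}(\Zp)$ is the intended one.
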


\begin{proof} 
Consider the following commutative diagram with exact rows. $$ 
\xymatrix{
0\ar[r] & \pi_1(Y_n) \ar[r] \ar^{f_{n*}}[d] & \pi_1(Y) \ar^{\tau'_n}[r] \ar^{f_{\red 0*}}[d]& \Z/p^n\Z \ar[r] \ar^{\iota_n}[d] &0\\%
0\ar[r]  & \pi_1(X_n)  \ar[r] & \pi_1(X)  \ar^{\tau_n}[r] & \Z/p^n\Z \ar[r] &0}
$$ 
Then it can be seen that taking $\(f_n\)_n$ and taking $\iota:\Zp\to \Zp$ is equivalent, and that $f_{\red 0}$ is Galois if and only all the $f_n$ are Galois. 
If Galois, since 
$G_n=\pi_1(X_n)/f_{n*}(\pi_1(Y_n))$, the snake lemma yields an exact sequence $0\to \Ker(\iota_n)\to G_n \to G_{\red 0} \to \Coker(\iota_n)\to 0$. Thus the natural \blue{maps} $G_n\to G_{\red 0}$ \blue{are isomorphisms} if and only if $\iota$ is an isomorphism. \end{proof}
Even if $\iota$ is not an isomorphism, $G_n$'s can be isomorphic to each other \blue{non-canonically}.
Indeed, $\Ker \iota_n$ and $\Coker \iota_n$ are isomorphic. 
For instance, if $\iota$ is the multiplication by $p$ and $G_{\red 0}=\Z/p\Z$, then 
the situation resembles to the case of $k_1/k$ in the previous subsection. 
We can also define a branched cover of branched $\Zp$-cover which may not be Galois, however we omit here. 


Next, we see some examples. The following lemma is obvious. 

\begin{lem} Let $f:N\to M$ be a branched cover of 3-manifolds with degree $p$. Let $K$ be a knot in $M$, let $K'$ be a component of $f^{-1}(K)$, 
and let $\mu,\mu'$ denote the meridians of $K,K'$ respectively.  
\blue{If} $K$ is inert or decomposed, then $f(\mu')=\mu$.
If $K$ is branched, then $f(\mu')=p\mu$. 
\end{lem}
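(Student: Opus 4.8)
The plan is to reduce the statement to the standard local normal form of a cyclic branched covering near the branch locus, and then read off the effect on meridians. First I would pass to tubular neighborhoods: choose a tubular neighborhood $V_K\cong S^1\times D^2$ of $K$ in $M$ with $\mu=\{{\rm pt}\}\times \partial D^2$, and let $V_{K'}$ be the component of $f^{-1}(V_K)$ that is a tubular neighborhood of $K'$. Since $f$ is a finite branched cover whose branch locus is contained in the knots under consideration, the restriction $f:V_{K'}\to V_K$ is a branched cover of solid tori, branched exactly along the core circle $K'$ lying over $K$. All the data relevant to $\mu$ and $\mu'$ is contained in this local picture, so it suffices to analyze $f$ on $V_{K'}\to V_K$.

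Next I would invoke the fact that such a cover of solid tori is, in the normal (disk) direction, modeled by $w\mapsto w^{e}$, where $e=e_{K'}$ denotes the branch index of $K'$ over $K$; this is the usual Fox local normal form of a cyclic branched cover. In the totally ramified case the model is $(\phi,w)\mapsto(\phi,w^{e})$, and in the unramified case it is $(\phi,w)\mapsto(e\phi,w)$ (respectively a disjoint union of identity sheets in the decomposed case). Restricting to the boundary tori, the induced map on the meridian circle is therefore a degree-$e$ map in the first case and a degree-$1$ map in the others, which in $H_1$ reads uniformly as $f_*(\mu')=e\,\mu$. Thus the entire content of the lemma is the identification of $e$ in each of the three cases.

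Finally I would apply the Hilbert ramification theory for finite branched Galois covers (Morishita \cite{Morishita2012} Chapter 5, or \cite{Ueki1}), specialized to the cyclic degree $p$ situation relevant here. Writing $I_{K'}<D_{K'}<\Gal(f)\cong\Z/p\Z$ for the inertia and decomposition groups, one has $e=\#I_{K'}$, and since $p$ is prime each of these groups is either trivial or all of $\Z/p\Z$. If $K$ is branched then $K'$ is totally ramified, $I_{K'}=\Z/p\Z$ and $e=p$, giving $f(\mu')=p\mu$; if $K$ is inert then $I_{K'}=1$ while $D_{K'}=\Z/p\Z$, so $e=1$; and if $K$ is decomposed then $I_{K'}=D_{K'}=1$, again $e=1$; in the latter two cases $f(\mu')=\mu$. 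Combining with $f_*(\mu')=e\,\mu$ completes the proof. There is essentially no obstacle here, as the author indicates: the lemma is purely local, and the only point demanding care is the bookkeeping identification of the branch index $e$ with the order of the inertia group, which is exactly the normalization supplied by the Hilbert theory cited above.
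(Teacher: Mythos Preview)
Your argument is correct. The paper itself gives no proof for this lemma; it is introduced with the sentence ``The following lemma is obvious'' and left at that. Your reduction to the local model on tubular neighborhoods, together with the identification $f_*(\mu')=e\,\mu$ and the case analysis $e=p$ (branched) versus $e=1$ (inert or decomposed), is exactly the standard verification the author presumably had in mind. One small notational point: in your description of the inert local model $(\phi,w)\mapsto(e\phi,w)$ the symbol $e$ there denotes the residue degree (equal to $p$), not the ramification index you call $e$ elsewhere; this does not affect the conclusion since the meridian still maps by degree $1$, but it would read more cleanly to use a separate letter. The appeal to the Hilbert ramification theory in the last paragraph is valid but somewhat heavier than necessary, since once you have the three explicit local models the value of $e$ in each case is already visible.
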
 
This lemma ensures that the following settings give examples. 

\begin{exa} \blue{ 
Let $\mca{L}=K\sqcup \ol{S}$ be a 2-component link in $M=S^3$ 
and  let $f_{\red 0}:N\to M$ be a branched cover of degree $p$ with the branch link $\ol{S}$. 
Put $X=M-\mca{L}$, $Y=N-f_{\red 0}^{-1}(\mca{L})$ and $S=f_{\red 0}^{-1}(\ol{S})$. 
Let $\mu_K,\mu_{\ol{S}}\in H_1(X)$ \blue{denote} the meridians of $K$ and $\ol{S}$ respectively. 
Here we take $\iota={\rm id}_{\Zp}$.}  
 
\noindent   
(1) \blue{Suppose that $L$ is decomposed in $f_0$ as $f_0^{-1}(K)=K_1\sqcup \cdots \sqcup K_p$} (e.g., let $\mca{L}$ be the trivial 2-component link), and let 
(i) 
$\tau:H_1(X)\to \Zp; \mu_K\mapsto1, \mu_{\ol{S}}\mapsto 0,$ 
$\tau':H_1(Y)\to \Zp; \mu_{K_i}\mapsto 1, \mu_{S}\mapsto 0.$
(ii) $\tau: \mu_K\mapsto 1, \mu_{\ol{S}}\mapsto \blue{1},$ 
$\tau': \mu_{K_i}\mapsto 1, \mu_{S}\mapsto \blue{p}.$ 

\noindent 
(2) \blue{Suppose that $L$ is inert in $f_0$ as $f_{\red 0}^{-1}(K)=K'$} 
(e.g., let $\mca{L}$ be the Hopf link), and let 
(i) 
$\tau:\mu_K\mapsto1, \mu_{\ol{S}}\mapsto 0,$ 
$\tau':\mu_{\blue{K'}}\mapsto 1, \mu_{S}\mapsto 0.$ 
(ii) $\tau: \mu_K\mapsto 1, \mu_{\ol{S}}\mapsto \blue{1},$ 
$\tau': \mu_{\blue{K'}}\mapsto 1, \mu_{S}\mapsto \blue{p}.$\\
These above give 
\bred{equivariant Galois morphisms} of branched $\Zp$-covers of degree $p$. 

\blue{Next, we change the setting and 
see an example of non-Galois case.}\\   
\noindent 
(3) 
Let $K$ be a knot in $M=S^3$, put $X=M-K$. 
Let $f_{\red 0}:N\to M$ be a cover of degree $p$ branched over $K$,
and let $f_{\red 0}:Y\to X$ be the restriction to the exterior. 
Put $K':=f_{\red 0}^{-1}(K)$, and let $\mu, \mu'$ denote the meridians of $K, K'$ respectively. If we put 
$\tau: H_1(X)\to \Zp; \mu_K \mapsto 1$, $\tau': H_1(Y)\to \Zp; \mu_{K'}\mapsto 1$, and 
$\iota: \Zp\to \Zp; 1\mapsto p$, 
then we obtain an analogous situation of $k_1/k$ in the previous \blue{sub}section. 
\end{exa} 

\subsection{On conditions for $\mu=0$} 
\blue{In this subsection, we recall analogous natures of cyclotomic and anti-cyclotomic $\Zp$-extensions for branched $\Zp$-covers investigated in our another paper \cite{Ueki3}.} 

\blue{
In a cyclotomic $\Zp$-extension over a number field, 
any non-$p$ prime is finitely decomposed, 
and in an anti-cyclotomic $\Zp$-extension over a CM-field, 
any non-$p$ prime is totally decomposed. 
For a branched $\Zp$-covers, instead of ``any non-$p$ primes'', we focus on a certain link. 
\begin{prop}[\cite{Ueki3} Proposition 5.1] 
Let $M$ be a 3-manifold, let $L$ be a link in $M$ consisting of null-homologous components, and let $\wt{M}=\(h_n:M_n \to M\)_n$ be the TLN-$\Zp$-cover over $(M,L)$. 
Let $K$ be a knot in $M-L$. 
If $\lk(K,L) \neq 0$, then $K$ is finitely decomposed into a $p^{v_p({\rm lk}(K,L))}$-component link in $\wt{M}$. 
If $\lk(K,L)=0$, then $K$ is totally decomposed in $\wt{M}$. 
\end{prop}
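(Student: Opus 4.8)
The plan is to reduce the whole statement to the value of a single $p$-adic number, the image $\tau([K])\in\Zp$ of the class of $K$ under the defining homomorphism of the TLN-$\Zp$-cover, and then to read off the decomposition behaviour of $K$ in each finite layer $h_n\colon M_n\to M$ from the Hilbert ramification theory for branched covers recalled above (see \cite{Morishita2012} Chapter~5, \cite{Ueki1}).

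First I would record the key computation. Put $X=M-L$ with $L=\sqcup_i K_i$. By construction, the TLN (total linking number) $\Zp$-cover over $(M,L)$ is the one whose defining homomorphism is the total linking number map $\tau\colon H_1(X)\to\Zp$, $[\gamma]\mapsto\sum_i\lk(\gamma,K_i)$; this map is well defined because each $K_i$ is null-homologous, it sends each meridian $\mu_i$ to $1$, and, as follows from the lemma on $H_1(X)$ recalled above (cf.\ \cite{KM2008}), it vanishes on the complementary section of $H_1(M)$, so it is indeed the defining homomorphism of the TLN-cover. Hence $\tau([K])=\sum_i\lk(K,K_i)=\lk(K,L)=:\ell\in\Z\subset\Zp$. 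Since $K\subset M-L$ is disjoint from the branch link, its preimage in each $M_n$ is unbranched and coincides with its preimage in $X_n$; moreover, for the abelian cover $h_n$ the decomposition group $D_{K_n}\subset\Gal(h_n)\cong\Z/p^n\Z$ of a component lying over $K$ is the cyclic subgroup generated by the image of $[K]$, that is, by $\tau_n([K])=\ell\bmod p^n$. Therefore the number of components of $h_n^{-1}(K)$ equals $[\Gal(h_n):D_{K_n}]=p^n/\mathrm{ord}_{\Z/p^n\Z}(\ell)$, and each such component is an $\mathrm{ord}_{\Z/p^n\Z}(\ell)$-fold cyclic cover of $K$.

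Next I would treat the two cases and pass to the inverse limit. If $\ell=0$, then $\tau_n([K])=0$ and $D_{K_n}=1$ for every $n$, so $h_n^{-1}(K)$ consists of $p^n$ disjoint homeomorphic copies of $K$; thus $K$ is totally decomposed in every $M_n$, hence in $\wt{M}$. If $\ell\neq0$, set $v:=v_p(\ell)$ and write $\ell=p^vu$ with $u\in\Zp^*$; then for $n>v$ the order of $\ell\bmod p^n$ in $\Z/p^n\Z$ is $p^{\,n-v}$, so $h_n^{-1}(K)$ has exactly $p^{v}$ components for all $n>v$. Hence the component count stabilizes: the transition maps between the $p^{v}$-element component sets are bijections for $n>v$, so $\varprojlim_n h_n^{-1}(K)$ has $p^{v}$ components and $\varprojlim_n D_{K_n}$ is the open subgroup $p^{v}\Zp$ of index $p^{v}$ in $\varprojlim_n\Gal(h_n)\cong\Zp$. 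That is, $K$ is finitely decomposed into a $p^{\,v_p(\lk(K,L))}$-component link in $\wt{M}$, as claimed.

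The only genuinely non-formal inputs are the two identifications used above: that $\tau([K])=\lk(K,L)$, and that $D_{K_n}$ is generated by $\tau_n([K])$. The latter is standard covering-space theory for abelian covers. The former is where I expect the main care to be needed — one must check that the section of $H_1(M)$ occurring in the definition of the TLN-cover, which is built via the lemma on $H_1(X)$ from Seifert surfaces of the components $K_i$, consists of classes with vanishing linking number against every $K_i$, so that $\tau$ is genuinely the total linking number homomorphism and not merely some homomorphism carrying the meridians to $1$.
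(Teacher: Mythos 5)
Your proof is correct: the identification $\tau([K])=\lk(K,L)$ (via the dual-basis splitting of Lemma~4.6, whose subtlety you rightly flag) together with the fact that the decomposition group of $K$ in the cyclic cover $h_n$ is $\langle \tau_n([K])\rangle\subset\Z/p^n\Z$ immediately gives $p^n/\mathrm{ord}(\ell\bmod p^n)$ components, hence $p^{v_p(\ell)}$ components stably for $\ell\neq 0$ and total decomposition for $\ell=0$. The paper itself only quotes this statement from \cite{Ueki3} without proof, but your argument is exactly the ``easy observation'' its Hilbert-ramification setup (Proposition~4.8 and the defining homomorphism $\tau$) is designed to yield, so there is nothing to add.
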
 
Let $f:\wt{N\,}\!\to \wt{M}$ be 
\bred{an equivariant Galois morphism} of branched $\Zp$-covers, let $\ol{S}\subset M$ denote the branch link of $f_{\red 0}:N\to M$, and suppose $\ol{S}\cap L=\phi$. 
If $\ol{S}$ is finitely decomposed in $\wt{M}$, then $f:\wt{N\,}\!\to \wt{M}$ \emph{resembles} a $p$-extension of \magenta{a} cyclotomic $\Zp$-field. 
If $\ol{S}$ is \red{totally} decomposed in $\wt{M}$, then $f:\wt{N\,}\!\to \wt{M}$ \emph{resembles} a $p$-extension of \magenta{an} anti-cyclotomic $\Zp$-field.} 

\blue{
In \cite{Ueki3}, we established an analogue of relative genus theory and studied the behaviors of Iwasawa $\mu$-invariants by following Iwasawa's argument in \cite{Iwasawa1973}. 
Suppose that $\wt{M}$ and $\wt{N\,}\!$ consist of $\Q$HS$^3$'s and that $f:\wt{N\,}\!\to \wt{M}$ is of degree $p$. 
If $\ol{S}$ is finitely decomposed in $\wt{M}$, then by \cite{Ueki3} Theorem 5.2, $\mu_{\wt{M}}=0$ if and only if $\mu_{\wt{N\,}\!}=0$. 
If 
$\ol{S}$ is infinitely decomposed in $\wt{M}$, then by \cite{Ueki3} Theorem 5.3, we have 
$\mu_{\wt{N\,}\!}\geq \#(\text{components of }\ol{S})$.
}
\blue{As a consequence, 
we have the following theorem:} 

\begin{thm}\label{mu} 
Let $f:\wt{N\,}\!\to \wt{M}$ be 
\bred{an equivariant Galois morphism} of degree $p$ of branched $\Zp$-covers of $\Q$HS$^3$'s, 
let $\ol{S}$ 
\blue{denote} the branch link of $f_0:N\to M$, and suppose $\mu_{\wt{M}}=0$. 
Then $\ol{S}$ is finitely decomposed in $\wt{M}$ if and only if $\mu_{\wt{N\,}\!}=0$. 
\end{thm}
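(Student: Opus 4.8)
The plan is to deduce Theorem~\ref{mu} directly from the two results of \cite{Ueki3} that are quoted just above the statement, together with the proposition on the Hilbert ramification theory for branched $\Zp$-covers (Proposition~4.8). First I would observe that, by the ramification trichotomy of Proposition~4.8 applied to the knot components of $\ol{S}$ in the branched $\Zp$-cover $\wt{M}$, the link $\ol{S}$ is either finitely decomposed or infinitely (hence totally, on each component) decomposed in $\wt{M}$; there is no third possibility, since $\ol{S}$ lies outside $L$ by the hypothesis $\ol{S}\cap L=\phi$ and so is unbranched in $\wt{M}$. This dichotomy is what makes the ``if and only if'' clean: exactly one of the two cases occurs.

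Next I would treat the two cases. In the finitely decomposed case, \cite{Ueki3} Theorem~5.2 (quoted above) gives directly that $\mu_{\wt{M}}=0$ iff $\mu_{\wt{N\,}\!}=0$; since we are assuming $\mu_{\wt{M}}=0$, this yields $\mu_{\wt{N\,}\!}=0$. In the infinitely decomposed case, \cite{Ueki3} Theorem~5.3 gives $\mu_{\wt{N\,}\!}\ge \#(\text{components of }\ol{S})\ge 1>0$, so $\mu_{\wt{N\,}\!}\ne 0$. Combining, under the standing assumption $\mu_{\wt{M}}=0$ we get: $\ol{S}$ finitely decomposed $\Rightarrow \mu_{\wt{N\,}\!}=0$, and $\ol{S}$ not finitely decomposed (i.e.\ infinitely decomposed) $\Rightarrow \mu_{\wt{N\,}\!}>0$. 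The contrapositive of the second statement gives $\mu_{\wt{N\,}\!}=0 \Rightarrow \ol{S}$ finitely decomposed, which together with the first completes the equivalence.

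The only genuinely non-routine point is making sure the hypotheses line up with those required by the cited theorems of \cite{Ueki3}: that $f$ being equivariant Galois of degree $p$ and $\wt{M},\wt{N\,}\!$ consisting of $\Q$HS$^3$'s are exactly the hypotheses under which Theorems~5.2 and~5.3 of \cite{Ueki3} are stated, and that the notion of ``(finitely/infinitely) decomposed'' used there coincides with the one coming from Proposition~4.8 here. I expect the main (minor) obstacle to be verifying that $\ol{S}\subset M$ genuinely sits outside $L$ in the relevant sense so that the trichotomy reduces to a dichotomy; this uses the remark following Definition~5.2 that when $\ol{S}_0$ is unbranched in $\wt{M}$ one has $\ol{S}_n=h_n^{-1}(\ol{S}_0)$ and $L\cap\ol{S}_0=\phi$, which is precisely the setting we are in. Everything else is a formal assembly of the quoted black boxes.

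In summary, the proof is short: invoke Proposition~4.8 to get the decomposition dichotomy for $\ol{S}$ in $\wt{M}$, then feed each branch into \cite{Ueki3} Theorem~5.2 respectively Theorem~5.3, and read off the equivalence under the hypothesis $\mu_{\wt{M}}=0$.
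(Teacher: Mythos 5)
Your proposal is correct and follows essentially the same route as the paper, which presents Theorem 5.4 simply ``as a consequence'' of \cite{Ueki3} Theorems 5.2 and 5.3 combined with the decomposition dichotomy for an unbranched knot in a branched $\Zp$-cover (closed subgroups of $\Zp$ are trivial or of finite index, so $\ol{S}$ is either finitely or totally decomposed). Your additional care in checking that $\ol{S}$ lies outside $L$ and is therefore unbranched in $\wt{M}$ is exactly the standing hypothesis of Subsection 5.3, so nothing further is needed.
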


\subsection{Tate cohomologies of branched $\Zp$-covers} 
\magenta{
In Iwasawa's second proof for Kida's formula in \cite{Iwasawa1981}, 
the following assertions were used: 
\begin{prop}[\cite{Iwasawa1981}]
Let $F_\infty/k_\infty$ be an extension of \cyan{a cyclotomic $\Zp$-field} of degree $p$, 
and let $S$ denote the set of ramified primes in $F_\infty$. Then for each $i\in \Z$,\\ 
(1) {\rm [Lemma 5]} The equality $\wh{H}^i(G, F_\infty^*)=0$ holds.\\ 
(2)\footnote{\magenta{It seems that this assertion was used implicitly.}} Let $\mca{O}_{F_\infty,S}^{*-}$ denote the minus part of $\mca{O}_{F_\infty,S}^*$ and put $\hbar^-_{i}:={\rm rank}\wh{H}^i(G,\mca{O}_{F_\infty,S}^{*-})$. 
If $k_\infty$ contains every $p$-power-th roots of unity, then $\hbar^-_2-\hbar^-_1=-1$.\\  
(3) {\rm [Lemma 1]} The equality $\wh{H}^i(G,I_{F_\infty,S})=0$ holds.\\ 
(4) The equality $q(\mca{O}_{F_\infty,S}^*)=q(\mca{O}_{F_\infty}^*)p^{\#S}$ holds.  
\end{prop}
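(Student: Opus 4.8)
Throughout $G=\Gal(F_\infty/k_\infty)\cong\Z/p\Z$ is cyclic, so Tate cohomology is $2$-periodic and it suffices to treat degrees $0$ and $1$. The plan is to recover Iwasawa's arguments: assertions (1) and (3) are cohomological vanishing statements, which I would derive from global class field theory together with the decomposition behaviour of primes in the cyclotomic $\Zp$-tower; assertions (2) and (4) are then Herbrand-quotient computations that reduce to (1) and (3) via the exact sequences $0\to\O^*_{F,S}\to F^*\to P_{F,S}\to 0$ and $0\to P_{F,S}\to I_{F,S}\to\Cl_{F,S}\to 0$ recalled in Section 2.

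For (1), Hilbert's Satz 90 gives $\wh{H}^1(G,F_\infty^*)=H^1(G,F_\infty^*)=0$, and in even degrees $\wh{H}^0(G,F_\infty^*)=\wh{H}^2(G,F_\infty^*)$ equals the relative Brauer group $\mathrm{Br}(F_\infty/k_\infty)=\Ker(\mathrm{Br}(k_\infty)\to\mathrm{Br}(F_\infty))$, which is killed by $p$. Writing $\mathrm{Br}(k_\infty)=\varinjlim_n\mathrm{Br}(k_n)$, the transition maps multiply each local invariant by the local degree, and in the cyclotomic tower every place of $k_n$ is either totally ramified over $p$ or of infinite residue degree in $k_\infty/k_n$ (for a prime $\ell\neq p$ one has $N\ell\not\equiv 1$ modulo every power of $p$); hence every $p$-primary local invariant eventually vanishes, $\mathrm{Br}(k_\infty)[p^\infty]=0$, and $\wh{H}^i(G,F_\infty^*)=0$ for all $i$. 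For (3), I would write $I_{F_\infty,S}=\bigoplus_{\q\notin S}\Z[G/D_\q]$ as a sum over the primes $\q$ of $k_\infty$ outside $S$, with $D_\q\subset G$ the decomposition group, so that $\wh{H}^i(G,I_{F_\infty,S})\cong\bigoplus_{\q\notin S}\wh{H}^i(D_\q,\Z)$ by Shapiro's lemma. For $\q\notin S$ the prime $v$ of $k$ below $\q$ is unramified and not completely split in $k_\infty/k$, so its Frobenius topologically generates an open subgroup of $\Gamma=\Gal(k_\infty/k)\cong\Zp$; inside the abelian group $\Gal(F_\infty/k)=G\times\Gamma$ the closure of the group generated by a Frobenius of $v$ then meets the direct factor $G$ trivially, forcing $D_\q=1$, whence $\Z[G/D_\q]\cong\Z[G]$ is $\Z[G]$-free and every summand vanishes.

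For (4), from $0\to\O^*_{F_\infty}\to\O^*_{F_\infty,S}\to A_S\to 0$, with $A_S$ identified via valuations with a finite-index subgroup of $\bigoplus_{v\in S}\Z[G/D_v]$, one gets $q(\O^*_{F_\infty,S})=q(\O^*_{F_\infty})\cdot q(A_S)$, and $q(A_S)=\prod_{v\in S}\#\wh{H}^0(D_v,\Z)=\prod_{v\in S}\#D_v=p^{\#S}$, since $[F_\infty:k_\infty]=p$ makes every ramified prime totally ramified with $D_v=G$ and archimedean primes contribute $1$. For (2), assuming $\mu_{p^\infty}\subset k_\infty$ so that $F_\infty$ is a CM field, I would split every module into $\pm$-eigenparts under complex conjugation; by periodicity $\hbar^-_2-\hbar^-_1=v_p(q(\O^{*-}_{F_\infty,S}))$, so the assertion amounts to $q(\O^{*-}_{F_\infty,S})=1/p$. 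The roots of unity $\mu_{p^\infty}\subset\O^{*-}_{F_\infty,S}$ carry a trivial $G$-action and contribute the factor $\#\wh{H}^0(G,\mu_{p^\infty})/\#\wh{H}^1(G,\mu_{p^\infty})=1/p$ (as $\mu_{p^\infty}$ is $p$-divisible, $\wh{H}^0=0$, while $\wh{H}^1\cong\mu_p$); by Dirichlet's unit theorem for CM fields the remaining minus $S$-units reduce to the contribution of the complex-conjugate prime pairs in $S$, which cancels in the cyclotomic situation, leaving $q(\O^{*-}_{F_\infty,S})=1/p$.

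The main obstacle --- and the reason this is genuinely Iwasawa's theorem rather than a formal consequence of class field theory --- is the \emph{finiteness} of the Tate groups $\wh{H}^i(G,\O^*_{F_\infty})$ and $\wh{H}^i(G,\O^*_{F_\infty,S})$ for the infinite extension $F_\infty/\Q$, without which the Herbrand quotients in (2) and (4) are not even defined, together with the precise bookkeeping of the local minus-part contributions in (2) --- which, as the footnote records, Iwasawa uses only implicitly, and where the hypothesis $\mu_{p^\infty}\subset k_\infty$ is essential. I would import those finiteness statements from Iwasawa's structure theory of $\Zp$-fields rather than reprove them here.
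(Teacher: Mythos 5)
The paper does not actually prove this proposition: it is quoted verbatim from Iwasawa's 1981 paper, with the bracketed ``Lemma 5'' and ``Lemma 1'' pointing into that reference and the footnote conceding that part (2) is only used implicitly there. So there is no in-paper argument to compare yours against; what you have written is a reconstruction of Iwasawa's own proofs, and in outline it is the right one --- Hilbert 90 plus the vanishing of the $p$-primary Brauer group of a cyclotomic $\Zp$-field for (1), Shapiro's lemma plus triviality of decomposition groups for (3), and Herbrand-quotient bookkeeping through the two $S$-unit exact sequences for (2) and (4). You also correctly identify that the real content lying underneath is the finiteness of the Tate groups of $\O^*_{F_\infty}$ and of the index of $P(F_\infty)\cap I_S$ in $I_S$, without which the Herbrand quotients in (2) and (4) are undefined; deferring those to Iwasawa's structure theory is reasonable at the level of a plan.

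One step would genuinely fail as written: in (3) your argument that $D_\q=1$ for $\q\notin S$ uses that $\q$ is unramified in $k_\infty/k$ with Frobenius generating an open subgroup of $\Gamma$, and this is false for primes above $p$, which are eventually \emph{totally ramified} in the cyclotomic tower and hence have \emph{finite} residue fields in $k_\infty$. An unramified $p$-adic prime can therefore perfectly well be inert in $F_\infty/k_\infty$, in which case $\Z[G/D_\q]\cong\Z$ contributes nonzero Tate cohomology. The statement is only correct under Iwasawa's convention that the ideal group is generated by primes prime to $p$ (equivalently, that $S$ is enlarged to contain all $p$-primes), which is consistent with the final Kida formula summing only over non-$p$ places; you should make that convention explicit rather than letting the Frobenius argument silently exclude $p$. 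The remaining soft spot is the asserted ``cancellation'' of the conjugate-pair contributions of $S$ in the minus part in (2): since complex conjugation permutes the primes of $S$, the minus part of $(\O^*_{F_\infty,S}/\O^*_{F_\infty})\otimes\Zp$ need not vanish a priori, and one must actually check that its Herbrand quotient is $1$ (this is exactly the piece Iwasawa leaves implicit, per the footnote), so it deserves an argument rather than an appeal to the cyclotomic situation.
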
}

We consider \magenta{their analogues} in the following. 
For a branched $\Zp$-cover $\wt{M}$, we fix CW-structures or PL-structures compatible with 
the inverse system, and 
define the modules of \emph{chains, cycles, boundaries,} and \emph{homologies} by the direct limits with respect to the transfer maps. 
In addition, we also define \emph{$S$-chains} and others as in Subsection 2.3. 

Moreover, for 
\bred{an equivariant Galois morphism} of branched $\Zp$-covers $f:\wt{N\,}\!\to \wt{M}$, we put $G=\Gal(f)$, and fix CW-structures 
or PL-structures compatible with all the \bred{maps in the system}. 
Let $\ol{S}\subset M$ be a link, 
put $S=f_{\red 0}^{-1}(\ol{S})$, \blue{$S_n=h_n'^{-1}(S)$, and let $\wt{S}$ denote} \bred{the inverse system $\{S_n\}_n$.} 
Since the direct limit functor is exact, every exact sequence in Section 2
holds for $\wt{N\,}\!$ and $(\wt{N\,}\!,\wt{S})$, and 
the following proposition is obtained. 

\begin{prop}\label{ZpTate} Let the situation be as above. Then \magenta{for each $i\in \Z$},\\ 
(1) \blue{The} equation $\wh{H}^i(G,C_2(\wt{N\,}\!))=0$ holds.\\
(2) If $\wt{M}$ and $\wt{N\,}\!$ consist of $\Q$HS$^3$\blue{'s},  then $\wh{H}^i(G,Z_2(\wt{N\,}\!))\cong \wh{H}^{i+1}(G,\Z)$. 
If $G\cong \Z/p\Z$ in addition, then $\hbar_i:=\rank \wh{H}^i(G,Z_2(\wt{N\,}\!))$ satisf\blue{ies} $\hbar_1=1,\hbar_2=0$.\\
(3) If $G\cong \Z/p\Z$, then $\wh{H}^i(G,Z_1(\wt{N\,}\!)_\wt{S})=0$.\\
(4) Suppose that $G\cong \Z/p\Z$, 
\violet{$S_n$'s consist} of \blue{$\Q$}-null-homologous components, $\ol{S}$ is properly branched in $f_{\red 0}$, and $\ol{S}$ is \blue{infinitely inert (equivalent to say, unbranched and finitely decomposed)} 
in $\wt{M}$. 
Then $S$ is also \blue{infinitely inert} 
\blue{in} $\wt{N\,}\!$. 
Let $s$ denote the number of \blue{connected} component of \bred{$\mca{S}:=\varprojlim S_n$}, 
which is equal to that of $S_n$ for $n\gg 0$. 
Then there is an isomorphism $Z_2(\wt{N\,}\!)_\wt{S}/Z_2(\wt{N\,}\!)\cong \blue{B_1(\wt{N\,}\!)\cap }Z_1(\wt{S})$ \blue{for the subgroup} $\blue{B_1(\wt{N\,}\!)\cap }Z_1(\wt{S})< Z_1(\wt{S})\cong \Z^s$ 
of finite index, and satisfies 
 $\wh{H}^1(G,Z_2(\wt{N\,}\!)_\wt{S}/Z_2(\wt{N\,}\!))=0$, $\wh{H}^2(G,Z_2(\wt{N\,}\!)_\wt{S}/Z_2(\wt{N\,}\!)) \cong (\Z/p\Z)^s$. 
\end{prop}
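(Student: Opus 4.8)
The plan is to derive all four assertions from the corresponding finite-degree computations of Subsections~2.2--2.3 by passing to the direct limit over $n$. Two general facts do the work. First, the direct limit functor is exact, so every short exact sequence of Section~2 survives for $\wt{N\,}\!$ and for $(\wt{N\,}\!,\wt{S})$ (as already remarked before the statement). Second, since $G$ is finite it admits a complete resolution by finitely generated free $\Z[G]$-modules, so $\wh{H}^i(G,-)$ commutes with filtered direct limits; hence, for any $G$-module of the form $A(\wt{N\,}\!)=\varinjlim_n A(N_n)$ along the transfer maps $h_{n,n+1}^!$, one has $\wh{H}^i(G,A(\wt{N\,}\!))\cong\varinjlim_n\wh{H}^i(G_n,A(N_n))$, the transition maps being those induced by the transfers. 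So the real point is to control these transition maps.

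Parts~(1) and~(2) are the easy ones. For~(1): each $C_2(N_n)$ is $\Z[G_n]$-free because the $G_n$-fixed locus of $N_n$ is at most $1$-dimensional, so $\wh{H}^i(G_n,C_2(N_n))=0$ and the limit vanishes; equivalently $C_2(\wt{N\,}\!)$ is $\Z[G]$-flat, hence cohomologically trivial. For~(2): from the finite-degree isomorphism $\wh{H}^i(G_n,Z_2(N_n))\cong\wh{H}^{i+1}(G_n,H_3(N_n))$ with $H_3(N_n)=\langle[N_n]\rangle\cong\Z$, and using that the transfer carries $[N_n]$ to $[N_{n+1}]$, the system $\{H_3(N_n)\}_n$ is the constant system $\Z$ with trivial $G$-action, so the transition maps on $\wh{H}^{i+1}$ are isomorphisms and $\wh{H}^i(G,Z_2(\wt{N\,}\!))\cong\wh{H}^{i+1}(G,\Z)$. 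When $G\cong\Z/p\Z$, since $\wh{H}^j(\Z/p\Z,\Z)$ equals $\Z/p\Z$ for $j$ even and $0$ for $j$ odd, we read off $\hbar_1=1$ and $\hbar_2=0$ (so $\hbar_2-\hbar_1=-1$, as on the arithmetic side).

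For Part~(3) I would begin from the finite-degree isomorphism $\wh{H}^i(G_n,Z_1(N_n)_{S_n})\cong\wh{H}^{i-1}\bigl(G_n,\Ker(\iota_*\colon H_0(S_n)\to H_0(N_n))\bigr)$; the degree $i=0$ piece already vanishes at each finite level. For the remaining degrees one must analyze the transfer on the groups $\Ker(\iota_*)$ along the tower, using the Hilbert ramification theory for branched $\Zp$-covers (Proposition~4.8 and the meridian lemma): each component of $S$ is eventually either inert in $\wt{N\,}\!$, in which case the transfer acts on its $H_0$-contribution by multiplication by $p$ so that the limit is $p$-divisible, or split, in which case $G$ permutes its preimages freely so that the contribution lies in a $\Z[G]$-free summand; in either case the direct limit is cohomologically trivial over $G$, giving~(3). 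This limiting mechanism is parallel to the ML-zero argument in the proof of Proposition~\ref{Sakuma}.

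Finally, Part~(4) I would do in two steps. \emph{Step 1 (ramification reduction).} Since $\ol{S}$ is properly branched in $f_0$ and unbranched (infinitely inert) in $\wt{M}$, the meridian lemma gives $\tau'(\mu_w)=0$ for every component $w$ of $S$ (so $S$ is unbranched in $\wt{N\,}\!$), and then Proposition~4.8 together with the compatibility of the decomposition data of $S$ in $\wt{N\,}\!$ with that of $\ol{S}$ in $\wt{M}$ --- which holds because $f$ is $\varprojlim\Gal(h_n)$-equivariant --- forces $S$ to be infinitely inert in $\wt{N\,}\!$; in particular the number of components of $S_n$ stabilizes to a value $s$, equal to the number of components of $\mca{S}=\varprojlim S_n$. \emph{Step 2 (homology and cohomology).} The finite-degree computation gives $Z_2(N_n)_{S_n}/Z_2(N_n)\cong B_1(N_n)\cap Z_1(S_n)$, a subgroup of $Z_1(S_n)\cong\Z^{s_n}$ of finite index (finite because every component of $S_n$ is $\Q$-null-homologous, so the classes of these components generate a finite subgroup of $H_1(N_n)$), on which $G_n$ acts trivially, whence $\wh{H}^1(G_n,\cdot)=0$ and $\wh{H}^2(G_n,\cdot)\cong(\Z/p\Z)^{s_n}$. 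For $n\gg0$ the transfer carries the circle class of each inert component of $S_n$ to a generator, identifying the $Z_1(S_n)$ with a fixed $\Z^s$; under this identification the groups $B_1(N_n)\cap Z_1(S_n)$ form an increasing chain of finite-index subgroups of $\Z^s$, whose indices form a nonincreasing sequence of positive integers, so the chain stabilizes. Hence $Z_2(\wt{N\,}\!)_{\wt{S}}/Z_2(\wt{N\,}\!)=\varinjlim_n\bigl(B_1(N_n)\cap Z_1(S_n)\bigr)$ is a finite-index subgroup $B_1(\wt{N\,}\!)\cap Z_1(\wt{S})$ of $Z_1(\wt{S})\cong\Z^s$ with trivial $G$-action, and the claim follows from $\wh{H}^1(\Z/p\Z,\Z^s)=0$, $\wh{H}^2(\Z/p\Z,\Z^s)\cong(\Z/p\Z)^s$. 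The main obstacle I expect is Step~1 --- rigorously deducing that $\ol{S}$ infinitely inert in $\wt{M}$ forces $S$ infinitely inert in $\wt{N\,}\!$ --- together with the parallel transition-map bookkeeping behind Part~(3).
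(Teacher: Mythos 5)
Your proposal is correct and follows essentially the same route as the paper: the paper's proof likewise reduces everything to the finite-level computations of Propositions 2.3, 2.5 and 2.6, passes to the direct limit along the transfers, and observes that the transfer is an isomorphism on $H_3(N_n)\cong\Z$ and on $Z_1(S_n)\cong\Z^s$ in the inert case, while it is multiplication by $p$ on $H_0$, so that the $H_0$-contributions become $p$-divisible and cohomologically trivial in the limit. You merely spell out details the paper leaves implicit (commutation of $\wh{H}^i(G,-)$ with filtered colimits, the stabilization of the chain $B_1(N_n)\cap Z_1(S_n)$, and the reduction of the inertness of $S$ in $\wt{N\,}\!$ to that of $\ol{S}$ in $\wt{M}$ via equivariance).
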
 

\begin{proof} 
\blue{Let $n\gg0$.}  
The transfer map $h'$$_{n,n+1}^!$ is an isomorphism on $H_3(N_n)\cong \Z$, 
and is the multiplication by $p$ on $H_0(N_n)\cong \Z$ and on $H_0(h_n^{-1}(S))\cong \Z^s$.
If $S$ is totally inert in $\wt{N\,}\!$, then $h'$$_{n,n+1}^!={\rm id}$ on $Z_1(h'$$_n^{-1}(S))\cong \Z^s$. 
Since ${\rm Nr}=(\text{multiplication by }p)$ is a surjection on a divisible group $\displaystyle \varinjlim_{\times p}\Z$, \magenta{the assertion} follows from Propositions 2.3, 2.5, and 2.6. 
\end{proof}


\section{Kida's formula for extensions of $\Zp$-fields}
In this section, we review the background of our main result of this paper. 
The following is a classical theorem. 
\begin{thm}[The Riemann--Hurwitz formula] Let $f:R'\to R$ be an $n$-fold covering of compact, connected Riemann surfaces and let $g$ and $g'$ denote 
the genera of $R$ and $R'$ respectively. 
the ramification indices $e(P')$ of $P'\in R'$ satisfy 
$$2g'-2=(2g-2)n+\sum_{P'\in R'}(e(P')-1).$$
\end{thm}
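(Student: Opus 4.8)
The plan is to compare the Euler characteristics of $R$ and $R'$ by pulling back a cell decomposition along $f$. Since $f$ is a non-constant holomorphic map of compact Riemann surfaces its ramification locus is finite, so I would first choose a triangulation (or finite CW-decomposition) of $R$ fine enough that every branch point --- every point of $R$ lying under a ramification point --- is a $0$-cell. Over the complement of the branch points $f$ is an honest $n$-sheeted covering map, hence the preimage of this decomposition is a cell decomposition of $R'$ in which every open edge and every open face of $R$ lifts to exactly $n$ edges, respectively faces, of $R'$; thus $E(R')=nE(R)$ and $F(R')=nF(R)$.

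For the $0$-cells I would invoke the standard local normal form: near a ramification point $P'$ with index $e(P')$ the map $f$ is, in suitable holomorphic charts, $z\mapsto z^{e(P')}$, and over a branch point $P\in R$ the fibre satisfies $\sum_{P'\in f^{-1}(P)}e(P')=n$. Hence $f^{-1}(P)$ has exactly $n-\sum_{P'\in f^{-1}(P)}(e(P')-1)$ points, and summing over all $0$-cells $P$ of $R$ --- the unramified $P'$ contributing $0$ since there $e(P')=1$ --- gives $V(R')=nV(R)-\sum_{P'\in R'}(e(P')-1)$.

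Combining the three counts, $\chi(R')=V(R')-E(R')+F(R')=n(V(R)-E(R)+F(R))-\sum_{P'\in R'}(e(P')-1)=n\,\chi(R)-\sum_{P'\in R'}(e(P')-1)$. Substituting $\chi(R)=2-2g$ and $\chi(R')=2-2g'$ and rearranging gives $2g'-2=n(2g-2)+\sum_{P'\in R'}(e(P')-1)$, which is the asserted formula.

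The only delicate ingredients are the two classical facts about branched covers of Riemann surfaces that enter as black boxes: the finiteness of the ramification locus together with the local form $z\mapsto z^{e}$ at a ramification point --- which is exactly what makes the pullback of a triangulation again a triangulation, since a small punctured coordinate disc about a branch point has punctured-disc preimages --- and the fibre-degree identity $\sum_{P'\in f^{-1}(P)}e(P')=n$. An alternative proof avoiding cell decompositions is to pull back a non-zero meromorphic $1$-form $\omega$ on $R$, compare $\mathrm{div}(f^{*}\omega)$ with the divisorial pullback of $\mathrm{div}(\omega)$ together with the ramification divisor $\sum_{P'}(e(P')-1)[P']$, and use that a canonical divisor has degree $2g-2$; this yields the same identity but rests on Riemann--Roch rather than on Euler characteristics. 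I would present the topological argument, since it is closest in spirit to the cell-theoretic methods used throughout this paper.
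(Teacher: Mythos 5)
Your argument is correct. Note, however, that the paper does not prove this statement at all: Theorem 6.1 is quoted as "a classical theorem" purely to motivate Kida's formula and its topological analogue, so there is no proof in the text to compare yours against. Your Euler-characteristic computation is the standard one and is complete: the choice of a triangulation with all branch points among the $0$-cells, the $n$-fold lifting of edges and faces over the unbranched part, the fibre count $\#f^{-1}(P)=n-\sum_{P'\in f^{-1}(P)}(e(P')-1)$ coming from $\sum_{P'\in f^{-1}(P)}e(P')=n$, and the substitution $\chi=2-2g$ assemble exactly into the asserted identity. The one point you rightly flag as needing the local normal form $z\mapsto z^{e}$ — that the pullback of the cell decomposition is again a cell decomposition near a ramification point — is the only place where the holomorphic (rather than merely topological) hypothesis is used, and your treatment of it is adequate. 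The alternative route via $\deg\mathrm{div}(f^{*}\omega)$ and Riemann--Roch that you sketch is also valid; the cell-theoretic version is indeed the one closest in spirit to the chain-level and Tate-cohomology computations carried out elsewhere in this paper.
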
 
 
Y.\ Kida proved a highly interesting analogue of this formula for number fields. 
\magenta{For each $k\subset \C$, we set $k_+=k\cap \R$.}  
\begin{thm}[Kida's formula, \cite{Kida1980}]
Let $p$ be an odd prime, 
let $F/k$ be a finite Galois $p$-extension of CM-fields, 
and let $F_\infty$ and $k_\infty$ denote the cyclotomic $\Zp$-extensions of $F$ and $k$ respectively. 
If $\mu_{\magenta{k_\infty/k}}=0$, then $\mu_{\magenta{F_\infty/F}}=0$, and 
$$\lambda^{-}_{\magenta{F_\infty/F}}-\delta=(F_\infty:k_\infty)(\lambda^{-}_{\magenta{k_\infty/k}}-\delta) +(\sum (e_w-1) -\sum(e_{w_+}-1)),$$
where \magenta{$\lambda^-$ denotes the $\lambda$-invariant of the minus part,}  
$\delta$ is 1 or 0 according \magenta{to whether} $k$ contains a primitive $p$-th root of unity or not, 
\magenta{$w$ (resp. $w_+$) runs through all the non-$p$ primes of $F_\infty$ (resp. $F_{\infty,+}$), and 
$e_w$ (resp. $e_{w_+}$) denotes the ramification index of $w$ (resp. $w_+$) in $F_\infty/k_\infty$ (resp. $F_{\infty,+}/k_{\infty,+}$). }  
\end{thm}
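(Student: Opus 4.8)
The plan is to follow Iwasawa's second proof \cite{Iwasawa1981}, recasting the Riemann--Hurwitz identity as the computation of a single $\Qp[G]$-character and then extracting the displayed formula by comparing dimensions. Two preliminary reductions come first. Since every finite $p$-group has a composition series with successive quotients $\Z/p\Z$, and since both the implication $\mu=0$ and the $\lambda^-$-identity are transitive in a tower $k_\infty\subset F'_\infty\subset F_\infty$ (the degrees multiply and the ramification sums telescope), it suffices to treat $G=\Gal(F_\infty/k_\infty)\cong\Z/p\Z$. The propagation $\mu_{k_\infty}=0\Rightarrow\mu_{F_\infty}=0$ is the classical inherited-$\mu$ statement for $p$-extensions (Iwasawa \cite{Iwasawa1973}); once it is in hand, the minus Iwasawa module $X_\infty^-(F)$ --- the minus part, under complex conjugation $J$, of the Galois group of the maximal unramified abelian pro-$p$ extension of $F_\infty$ --- is finitely generated over $\Zp$, so that $V:=X_\infty^-(F)\otimes\Qp$ is a finite-dimensional $\Qp[G]$-module with $\lambda^-_{F_\infty}=\dim_\Qp V$. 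A second reduction base-changes along the prime-to-$p$ extension $k_\infty\subset k_\infty(\zeta_p)$ so that all $p$-power roots of unity become available; this is exactly the hypothesis under which the facts (1)--(4) of Iwasawa recalled in Subsection 5.4 are stated, and the passage between the two situations is what will manufacture the normalizing constant $\delta$.

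Since $G=\Z/p\Z$ has exactly two irreducible $\Qp$-representations --- the trivial $\mathbf 1$ and the faithful one afforded by $\Qp(\zeta_p)$ of dimension $p-1$ --- I may write $V\cong\mathbf 1^{\,a}\oplus\Qp(\zeta_p)^{\,b}$, so that $\lambda^-_{F_\infty}=\dim V=a+b(p-1)$ while $\dim V^G=a$. The first pillar is a descent (genus) argument: the norm map identifies the coinvariants $V_G$ with $X_\infty^-(k)\otimes\Qp$ up to a finite group, and since invariants and coinvariants have equal dimension over $\Qp$ for a finite group, this gives $a=\dim V^G=\lambda^-_{k_\infty}$. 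It then remains only to determine the faithful multiplicity $b$, after which the formula reads $\lambda^-_{F_\infty}=\lambda^-_{k_\infty}+b(p-1)$, to be matched against the target $\lambda^-_{F_\infty}-\delta=p(\lambda^-_{k_\infty}-\delta)+\sum_w(e_w-1)-\sum_{w_+}(e_{w_+}-1)$.

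The second pillar computes $b$ cohomologically. Working over $F_\infty$ with $S$ the set of non-$p$ primes ramified in $F_\infty/k_\infty$, I run the two $S$-sequences of Subsection 2.3 in their minus parts,
\[
1\to P_{F_\infty,S}\to I_{F_\infty,S}\to \Cl_{F_\infty,S}\to 1,
\]
\[
1\to \mca{O}^*_{F_\infty,S}\to F_\infty^*\to P_{F_\infty,S}\to 1,
\]
and feed in the four facts of Iwasawa: Hilbert~90 $\wh{H}^i(G,F_\infty^*)=0$ collapses the middle of the second sequence, $\wh{H}^i(G,I_{F_\infty,S})=0$ collapses the $S$-ideals, while the unit Herbrand-quotient identity $q(\mca{O}^*_{F_\infty,S})=q(\mca{O}^*_{F_\infty})\,p^{\#S}$ together with the rank relation $\hbar^-_2-\hbar^-_1=-1$ supply the global Euler characteristic of the $S$-unit side. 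Chasing the sequences expresses the Tate cohomology of $\Cl_{F_\infty,S}^-$, hence the integral $G$-module structure of $X_\infty^-(F)$, in terms of the decomposition groups $Z_w$ at the primes of $S$ via the identification $\wh{H}^i(G,I_w)\cong\wh{H}^i(Z_w,\Z)$ of Subsection 2.3. The upshot is that $b-(\lambda^-_{k_\infty}-\delta)$ equals the number of minus-relevant ramified primes --- this is exactly where the factor $p^{\#S}$ of fact (4) enters and where the $-1$ of fact (2) becomes the $\delta$ --- and since each such prime is totally ramified with $e_w=p$, the faithful part contributes $p-1$ per prime. Finally, because $p$ is odd the projector $\tfrac{1-J}{2}$ splits $V=V^+\oplus V^-$ cleanly and $X_\infty^+$ is governed by the totally real subfield; subtracting its contribution turns the raw count into $\sum_w(e_w-1)-\sum_{w_+}(e_{w_+}-1)$, and combining with $a=\lambda^-_{k_\infty}$ yields the stated identity.

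The main obstacle is the genuinely arithmetic core hidden in facts (2) and (4): verifying that the only non-local contribution to the Euler characteristic of the minus $S$-units is the single $-1$ coming from the cyclotomic line $\Qp(1)$, and that the ramified-prime count is precisely $p^{\#S}$. This is where the CM hypothesis, the oddness of $p$, and the minus projector are indispensable, and where the reflection between plus and minus parts together with Kummer duality over $k_\infty(\zeta_p)$ actually lives. The more bookkeeping-heavy obstacle is ensuring that the two reductions cohere: that the dévissage to degree $p$ reassembles the degree factor $(F_\infty:k_\infty)$ and the telescoped ramification sums correctly, and that the descent from $k_\infty(\zeta_p)$ back to $k_\infty$ deposits exactly the constant $\delta$ --- no more and no less --- on both sides of the equation.
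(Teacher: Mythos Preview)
The paper does not prove this theorem: Section 6 is explicitly a review section, Theorem 6.2 is stated with citation to \cite{Kida1980}, and the text that follows only \emph{describes} Iwasawa's alternative proof and records its less explicit precursor (Theorem 6.3). There is no ``paper's own proof'' of this statement to compare against.

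That said, your outline is precisely Iwasawa's second proof, which is the method the paper sketches in Section 6 and then implements in full for the topological analogue in Subsection 7.2 (the proof of Lemma \ref{deg=p}). The correspondence is close: your d\'evissage to $G\cong\Z/p\Z$ and telescoping of ramification sums is Subsection 7.3; your cohomological pillar --- running the $S$-sequences and feeding in the four facts of Proposition 5.7 --- is the Herbrand-quotient chase in Subsection 7.2 yielding $a_{p-1}-a_1=\hbar_2-\hbar_1+s$; and your identification $\dim V^G=\lambda_{k_\infty}^-$ via norm/coinvariants corresponds to the passage ``the decomposition of $A_{\wt{S}}$ tells $\lambda_{\wt{M}}=a_1+a_p$''. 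One bookkeeping difference: the paper's analogue proof works over $\Zp$ with three indecomposable $\Zp[G]$-lattices $X_1,X_{p-1},X_p$ (trivial, augmentation, regular) and reads off multiplicities $a_1=r_1$, $a_{p-1}=r_2$ directly from Tate-cohomology ranks, whereas you tensor to $\Qp$ and use only the two irreducibles $\mathbf{1}$ and $\Qp(\zeta_p)$; these agree via $a=a_1+a_p$, $b=a_{p-1}+a_p$. Your ``second pillar'' is the least fleshed-out part --- in the analogue it is the explicit chase $\wh{H}^n(H_1(\wt{N\,}\!)_{\wt{S}[p]})\cong\wh{H}^n(Z_2(\wt{N\,}\!)_{\wt{S}})$ together with Proposition \ref{ZpTate}(2),(4) --- but the skeleton is right, and filling it in on the arithmetic side (including the $\delta$-reduction and the plus/minus splitting under $J$) is exactly the content of \cite{Iwasawa1981}.
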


Following the method of Chevalley--Weil  \cite{CWH1934},  
K.\ Iwasawa gave an alternative proof for Kida's formula, 
with use of $p$-adic representation theory of finite groups.
He also gave a less explicit formula for a more general situation, 
from which his second proof follows: 

\begin{thm}[Iwasawa \cite{Iwasawa1981}, a corollary of Theorem 6]
Let $p$ be a prime number, 
let \blue{$k_\infty$} be a cyclotomic $\Zp$-field, 
and let \blue{$F_\infty$} be a cyclic extension of degree $p$ over $k_\infty$ unramified at every infinite place of $k_\infty$ \magenta{with $G=\Gal(k_\infty/k)$}. Assume that $\mu_{k_\infty}=0$. Then $\mu_{F_\infty}=0$, and 
$$\lambda_{F_\infty}=p\lambda_{k_\infty}+\sum_{w}(e_w-1)+(p-1)(\hbar_{2}-\hbar_{1}),$$ 
where $w$ ranges over all non-$p$ \magenta{primes} on $F_\infty$, $e_w$ denotes the ramification index of $w$ in $F_\infty/k_\infty$, and $\hbar_{i}$ denotes the $p$-rank of the abelian group $H^{i}(G, \O_{F_\infty}^{*})$ for $i=1,2$. 
\end{thm}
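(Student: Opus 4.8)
The plan is to follow the Chevalley--Weil method \cite{CWH1934} in the form Iwasawa used in \cite{Iwasawa1981}, rephrasing the assertion as linear algebra of $p$-adic representations of $G:=\Gal(F_\infty/k_\infty)\cong\Z/p\Z$. First I would pass from $\mu_{k_\infty}=0$ to $\mu_{F_\infty}=0$: this is the $\Z_p$-field analogue of a genus-theoretic boundedness argument (Iwasawa \cite{Iwasawa1973}), reading $\mu=0$ via Lemma~\ref{Lambda}(4) as the statement that the $p$-ranks of the $p$-class groups along the tower stay bounded, and transporting that boundedness from the layers $k_n$ to $F_n$ by an ambiguous-class-number estimate. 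Once $\mu_{F_\infty}=0$ is known, the Iwasawa module $\mathfrak X:=\mathfrak X_{F_\infty}$ (the Galois group of the maximal unramified abelian pro-$p$ extension of $F_\infty$, i.e.\ $\varprojlim$ of the $p$-class groups of the layers) is a finitely generated torsion $\Lambda$-module with $\mu=0$, hence finitely generated over $\Z_p$ of rank $\lambda_{F_\infty}$ (and similarly $\rank_{\Z_p}\mathfrak X_{k_\infty}=\lambda_{k_\infty}$); so I would study $V_{F_\infty}:=\mathfrak X\otimes_{\Z_p}\Q_p$ as a $\Q_p[G]$-module of dimension $\lambda_{F_\infty}$.

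\textbf{Linear-algebra input.} Since $\Q_p[G]\cong\Q_p\times\Q_p(\zeta_p)$, a finite-dimensional $\Q_p[G]$-module has only two irreducible constituents, the trivial one (dimension $1$) and $V_0:=\Q_p(\zeta_p)$ (dimension $p-1$). The elementary fact I would establish is that for a $\Z_p[G]$-module $A$ free of finite rank over $\Z_p$ both $\wh{H}^0(G,A)$ and $\wh{H}^1(G,A)$ are killed by $p$, and — checking it on $A=\Z_p,\Z_p[G],\Z_p[\zeta_p]$ and extending by additivity on short exact sequences — that
$$\rank_{\Z_p}A\;=\;p\cdot\rank_{\Z_p}A^{G}\;+\;(p-1)\bigl(\hbar_1(A)-\hbar_0(A)\bigr),\qquad \hbar_i(A):=\dim_{\F_p}\wh{H}^i(G,A).$$
This reduces computing $\lambda_{F_\infty}$ to evaluating $\rank A^{G}$ and the Tate cohomology of the appropriate Iwasawa module.

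\textbf{Bringing in ramification and units.} Let $S$ be the finite set of primes of $k_\infty$ above $p$ together with the non-$p$ primes ramified in $F_\infty/k_\infty$ (same letter for the primes above in $F_\infty$), so $F_\infty/k_\infty$ is unramified outside $S$. I would use the fundamental exact sequence of $\Z_p$-free $\Z_p[G]$-modules relating the pro-$p$ completed global $S$-unit group $\mca{O}^*_{F_\infty,S}$, the semi-local units at the primes above $p$, the $S$-ramified Iwasawa module $\mathfrak X_{F_\infty,S}$, and $\mathfrak X$. Feeding the sequences $0\to\mca{O}^*_{F_\infty,S}\to F_\infty^*\to P_{F_\infty,S}\to0$ and $0\to P_{F_\infty,S}\to I_{F_\infty,S}\to\Cl_{F_\infty,S}\to0$ into the Iwasawa facts recalled above ($\wh{H}^i(G,F_\infty^*)=0$ and $\wh{H}^i(G,I_{F_\infty,S})=0$) yields $\wh{H}^i(G,\Cl_{F_\infty,S})\cong\wh{H}^{i+2}(G,\mca{O}^*_{F_\infty,S})$, and the passage between $\mca{O}^*_{F_\infty,S}$ and $\mca{O}^*_{F_\infty}$ is governed by $q(\mca{O}^*_{F_\infty,S})=q(\mca{O}^*_{F_\infty})\,p^{\#S}$. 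Taking $\Z_p$-ranks and $G$-cohomology termwise and inserting the master identity, the three contributions should assemble as claimed: $p\lambda_{k_\infty}$ from the Chevalley--Weil comparison of $\mathfrak X$ with $\mathfrak X_{k_\infty}$ (where $V_{F_\infty}$ is $\lambda_{k_\infty}$ regular representations up to corrections), $\sum_w(e_w-1)$ from the inertia subgroups at the ramified non-$p$ primes (the discrepancy between $\mathfrak X_{F_\infty,S}$ and $\mathfrak X$), and $(p-1)(\hbar_2-\hbar_1)$ from the $S$-unit term once the factor $p^{\#S}$ absorbs the $p$-local places, with Tate periodicity ($\hbar_0=\hbar_2$, $\hbar_1=\hbar_3$) and the index shift $\wh{H}^i(\Cl)\cong\wh{H}^{i+2}(\mca{O}^*)$ lining the cohomological terms up with the statement.

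\textbf{Expected obstacle.} The hard part will be the exact bookkeeping: one needs the module carrying $\lambda_{F_\infty}$ to have no nonzero finite $\Lambda$-submodule (a weak-Leopoldt-type input), so that forming $G$-invariants, tensoring with $\Q_p$, and passing to and from the finite layers all commute with the rank computation without hidden finite discrepancies; one must identify $\mathfrak X_{F_\infty,S}/\mathfrak X$ and the genus defect precisely enough that the ramified non-$p$ primes contribute exactly $e_w-1$ and the $p$-primes exactly cancel $p^{\#S}$; and one must keep the cohomological indices and signs straight along the chain of exact sequences. By contrast, the implication $\mu_{k_\infty}=0\Rightarrow\mu_{F_\infty}=0$, though substantial, can be quoted from \cite{Iwasawa1973}, and everything after it is $\Q_p[G]$-linear algebra together with the Herbrand-quotient computations already recorded above.
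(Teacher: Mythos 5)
This theorem is quoted in the paper from Iwasawa \cite{Iwasawa1981} and is not reproved there; what the paper actually proves is its topological analogue (Lemma 7.2), by transposing precisely the argument you sketch. Your reconstruction is faithful to Iwasawa's second proof and is correct in outline: the $\mu$-transfer is quoted from genus theory, your identity $\rank_{\Zp}A=p\cdot\rank_{\Zp}A^{G}+(p-1)(\hbar_1(A)-\hbar_0(A))$ is valid (it checks on the three indecomposable $\Zp[\Z/p\Z]$-lattices $\Zp$, $\Zp[\zeta_p]$, $\Zp[G]$, and both sides are additive on short exact sequences because $\wh{H}^1$ is finite), and the $S$-ideal/$S$-unit sequences together with $\wh{H}^i(G,F_\infty^*)=0$, $\wh{H}^i(G,I_{F_\infty,S})=0$ and $q(\mca{O}^*_{F_\infty,S})=q(\mca{O}^*_{F_\infty})\,p^{\#S}$ are exactly the inputs the paper records in Subsection 5.4. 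Two bookkeeping points deserve care before this becomes a proof. First, Iwasawa's argument (and the paper's analogue) runs on the \emph{direct} limit $\Cl(F_\infty)_{[p]}\cong(\Qp/\Zp)^{\lambda_{F_\infty}}$ (cf.\ Proposition 3.5) and its Pontryagin dual, not on $\mathfrak{X}\otimes_{\Zp}\Qp$: the Hilbert 90 and $S$-ideal sequences you invoke live over the union field $F_\infty$, so you should fix the direct-limit normalization throughout instead of sliding between $\varprojlim$ and $\varinjlim$ (harmless only because $\mu=0$ makes the two ranks agree, which itself needs saying). Second, the conversion of the Herbrand factor $p^{\#S}$ into exactly $\sum_w(e_w-1)$, and the vanishing $\wh{H}^i(G,I_{F_\infty,S})=0$, both rest on the fact that in a cyclotomic $\Zp$-field every non-$p$ prime is finitely decomposed, so every prime outside the ramification locus splits completely in $F_\infty/k_\infty$ and every prime in $S$ is totally ramified; your choice of $S$ (ramified non-$p$ primes together with all $p$-primes) does not quite match Iwasawa's, and with split or inert $p$-primes included the quotient $\mca{O}^*_{F_\infty,S}/\mca{O}^*_{F_\infty}$ no longer has the cohomology $(\Z/p\Z)^{\#S}$ in degree $2$, so the "exact cancellation of the $p$-local places" you hope for must be argued, not assumed.
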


\magenta{An application of this formula for totally real number fields with some cohomological study of units was given in (\cite{FKOT1997}).} 

\section{Kida's formula for branched $\Zp$-covers}
\subsection{Main theorem and example} 
\blue{An analogue of the original Kida's formula (Theorem 6.2) is stated as follows:} 
\begin{thm}[Kida's formula]\label{Main}  
Let $f:\wt{N\,}\!\to \wt{M}$ be 
\bred{an equivariant Galois morphism of degree $p$-power} of 
branched $\Zp$-covers of $\Q$HS$^3$. 
Let $\ol{S}\subset M$ denote the branch link of $f_{\red 0}:N\to M$
\blue{and put $S=f_{\red 0}^{-1}(\ol{S})$.} 
\blue{If $\ol{S}$ is infinitely inert in $\wt{M}$, then so is $S$ in $\wt{N\,}\!$, and $S_n\blue{:=h_n'^{-1}(S)}$ is the branch link of $f_n$.} 
If in addition $\mu_{\wt{M}}=0$, then $\mu_{\wt{N\,}\!}=0$. 
\bred{Suppose that any component of $\ol{S}$ is not inert in $f_0$.}  
\blue{For each component $w=(w_n)_n$ of \blue{$\mca{S}:=\varprojlim_n S_n$}, let $e_w$ denote the branch index of $w$ defined as that of $w_n$ in $f_n$ for $n\gg 0$.} Then 
 $$\lambda_{\wt{N\,}\!}-1=\deg(f)(\lambda_{\wt{M}}-1)+\sum_{w\subset \blue{\mca{S}}}(e_w-1).$$
\end{thm}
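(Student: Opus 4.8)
\emph{Strategy.}
The plan is to redo Iwasawa's second proof of Kida's formula \cite{Iwasawa1981} line by line, under the dictionary $F_\infty^*\leftrightarrow C_2(\wt{N})$, $\mca{O}_{F_\infty}^*\leftrightarrow Z_2(\wt{N})$, $I_{F_\infty}\leftrightarrow Z_1(\wt{N})$ (together with their $\wt{S}$-versions), using the Tate cohomology computations of Proposition~\ref{ZpTate} and the $p$-adic representation theory of $G=\Gal(f)$. First I would reduce to the case $\deg(f)=p$. Since $G$ is a finite $p$-group it has a central series $1=G_r\lhd\cdots\lhd G_0=G$, and the quotients $\wt{N}^{(i)}:=\wt{N}/G_i$ form a tower $\wt{N}=\wt{N}^{(r)}\to\cdots\to\wt{N}^{(0)}=\wt{M}$ of equivariant Galois morphisms of degree $p$ between branched $\Zp$-covers of $\Q$HS$^3$'s. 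Along this tower the hypotheses are inherited: at each stage the relevant preimage of $\ol{S}$ remains infinitely inert, by the Hilbert ramification theory of Proposition~4.8 and the equivariance, hence remains finitely decomposed, so $\mu=0$ persists by Theorem~\ref{mu}; and the displayed identity composes along the tower by the multiplicativity of the ramification divisor under pullback (the tower formula for the branch indices $e_w$), exactly as $2g-2$ does in the classical Riemann--Hurwitz formula. So it suffices to treat $\deg(f)=p$.

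\emph{The structural assertions.}
Assume now $G\cong\Z/p\Z$. Let $\iota\colon\Zp\to\Zp$ be the isomorphism attached to $f$ by Proposition~5.2, so $\tau\circ f_{0*}=\iota\circ\tau'$. For a component $w$ of $S=f_0^{-1}(\ol{S})$ lying over a component $\ol{S}_j$ of $\ol{S}$ with branch index $e_w$, Lemma~5.3 gives $f_{0*}(\mu_w)=e_w\mu_{\ol{S}_j}$, whence $\iota(\tau'(\mu_w))=e_w\,\tau(\mu_{\ol{S}_j})=0$ because $\ol{S}$ is unbranched in $\wt{M}$; as $\iota$ is injective, $\tau'(\mu_w)=0$, so $S$ is unbranched in $\wt{N}$. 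The parallel computation with a longitude of $w$, using that $\ol{S}$ is finitely decomposed in $\wt{M}$ and that no component of $\ol{S}$ is inert in $f_0$, shows $S$ is finitely decomposed in $\wt{N}$, so by the ramification trichotomy (Proposition~4.8) $S$ is infinitely inert. Since $\ol{S}$ unbranched in $\wt{M}$ forces $\ol{S}_n=h_n^{-1}(\ol{S})$, the commutative squares give $S_n=h_n'^{-1}(S)=f_n^{-1}(\ol{S}_n)$, which is the branch locus of $f_n$. Finally $\mu_{\wt{M}}=0$ together with $\ol{S}$ finitely decomposed in $\wt{M}$ yields $\mu_{\wt{N}}=0$ by Theorem~\ref{mu}.

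\emph{The $\lambda$-formula.}
The remaining content is the topological analogue of Iwasawa's less-explicit version of Kida's formula (the statement whose cyclic degree-$p$ consequence is Theorem~6.3), which I would obtain by transcribing his proof with $Z_2(\wt{N})$ in place of $\mca{O}_{F_\infty}^*$. The required cohomological inputs are all provided by Proposition~\ref{ZpTate}: $\wh{H}^i(G,C_2(\wt{N}))=0$ replaces Hilbert's Satz~90 $\wh{H}^i(G,F_\infty^*)=0$; $\wh{H}^i(G,Z_1(\wt{N})_{\wt{S}})=0$ replaces $\wh{H}^i(G,I_{F_\infty,S})=0$; and $\wh{H}^1(G,Z_2(\wt{N})_{\wt{S}}/Z_2(\wt{N}))=0$, $\wh{H}^2(G,Z_2(\wt{N})_{\wt{S}}/Z_2(\wt{N}))\cong(\Z/p\Z)^s$ replaces the $S$-unit Herbrand-quotient identity $q(\mca{O}_{F_\infty,S}^*)=q(\mca{O}_{F_\infty}^*)\,p^{\#S}$. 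Feeding these, the $p$-adic Sakuma exact sequence (Proposition~\ref{Sakuma}), the $S$-chain exact sequences of Section~2, and the descriptions $H_1(\wt{M})_{[p]}\cong(\Qp/\Zp)^{\lambda_{\wt{M}}}$, $H_1(\wt{N})_{[p]}\cong(\Qp/\Zp)^{\lambda_{\wt{N}}}$ from Proposition~\ref{injlim} (valid since $\mu=0$) into the Chevalley--Weil computation — comparing the class of $H_1(\wt{N})_{[p]}\otimes\Qp$ in the representation ring of $G$ with $\deg(f)$ copies of the $\wt{M}$-part, the inertia-induced contributions of the components of $\mca{S}$, and a correction term read off from the Tate cohomology of $Z_2(\wt{N})$ — produces
$$\lambda_{\wt{N}}=\deg(f)\,\lambda_{\wt{M}}+\sum_{w\subset\mca{S}}(e_w-1)+(\deg(f)-1)(\hbar_2-\hbar_1),\qquad \hbar_i=\rank\wh{H}^i(G,Z_2(\wt{N})).$$
By Proposition~\ref{ZpTate}(2) one has $\hbar_1=1$ and $\hbar_2=0$ (the topological avatar of Iwasawa's $\delta=1$, here coming from $H_3(N_n)\cong\Z$), so the last summand equals $1-\deg(f)$ and rearranging yields $\lambda_{\wt{N}}-1=\deg(f)(\lambda_{\wt{M}}-1)+\sum_{w\subset\mca{S}}(e_w-1)$.

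\emph{Main obstacle.}
The hard part is the representation-theoretic step: transcribing Iwasawa's argument faithfully, i.e.\ checking that his bookkeeping of $S$-units — Herbrand quotients, the exact sequences linking $\mca{O}^*$, $F^*$, $I$ and $I_S$, and the character-theoretic extraction of the $\lambda$-invariant — goes through verbatim after the substitutions above, and that the growth of $H_1(N_n)_{[p]}$ governed by $\mca{H}=\varprojlim H_1(M_n)_{[p]}$ fits together with the $\Qp[G]$-module data. A subsidiary technical point is to make the d\'evissage rigorous — verifying that each intermediate quotient in the tower is genuinely an equivariant Galois morphism of $\Q$HS$^3$'s and that ``infinitely inert'' is inherited at every stage.
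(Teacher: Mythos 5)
Your strategy coincides with the paper's: reduce to degree $p$ through a tower of degree-$p$ subcovers supplied by the nontrivial center of the $p$-group, then transcribe Iwasawa's second proof with $Z_2(\wt{N\,}\!)$ playing the role of the unit group, arriving at the intermediate identity $\lambda_{\wt{N\,}\!}=p\lambda_{\wt{M}}+\sum_{w}(e_w-1)+(p-1)(\hbar_2-\hbar_1)$ and concluding from $\hbar_1=1$, $\hbar_2=0$ of Proposition 5.8(2). The degree-$p$ part of your outline, including the list of cohomological inputs replacing Satz 90, the $S$-ideal vanishing, and the $S$-unit Herbrand quotient, matches the paper's Lemma 7.2.

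The gap is in the reduction step. The quantity $\sum_w(e_w-1)$ does \emph{not} compose along a tower ``exactly as $2g-2$ does'' in the Riemann--Hurwitz formula, because a component can be \emph{inert} at an intermediate degree-$p$ stage, a phenomenon with no counterpart for points of Riemann surfaces. Writing $v$, $u$, $w$ for a component and its images in $\wt{M}$, $\wt{H\,}\!$, $\wt{N\,}\!$, the branched case gives $p(e(u/v)-1)+(e(w/u)-1)=e(w/v)-1$ and the decomposed case gives $p(e(u/v)-1)+\sum_i(e(w_i/u)-1)=\sum_i(e(w_i/v)-1)$, but the inert case gives $p(e(u/v)-1)+(e(w/u)-1)=p(e(w/v)-1)$, which breaks the additivity. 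This three-case check is the entire content of the paper's induction, and it is exactly where the hypothesis that no component of $\ol{S}$ is inert in $f_0$ is spent. You instead spend that hypothesis on proving that $S$ is finitely decomposed in $\wt{N\,}\!$ --- a structural claim the paper obtains from ``$\ol{S}$ is infinitely inert in $\wt{M}$'' alone, via the ramification trichotomy of Proposition 4.8 --- so your outline, as written, leaves the only nontrivial point of the $p$-power reduction unjustified. You should add the explicit case analysis in the induction step, together with the verification that the hypotheses (in particular the non-inert condition on the branch link) persist at every degree-$p$ stage of the tower.
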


\blue{By Kida's method in \cite{Kida1980}, this formula is deduced from the case of degree $p$:}  


\begin{lem}[The case of degree $p$] \label{deg=p}
Let $f:\wt{N\,}\!\to \wt{M}$ be a branched cover of degree $p$ of branched $\Zp$-covers of $\Q$HS$^3$, 
let $\ol{S}\subset M$ be a link containing the branch link of $f_{\red 0}:N\to M$, 
\blue{put $S=f_{\red 0}^{-1}(\ol{S})$, and let $\mca{S}$ denote the inverse limit of the preimages of $\ol{S}$ in $\wt{N\,}\!$.}  
If $\ol{S}$ is \blue{infinitely} 
inert in $\wt{M}$, then $S$ is \blue{infinitely} 
 inert in $\wt{N\,}\!$. 
If in addition $\mu_{\wt{M}}=0$, then $\mu_{\wt{N\,}\!}=0$. 
\bred{Suppose that any component of $\ol{S}$ is not inert in $f_0$.} 
\blue{Let} $e_w$ denote the branch index for each component $w$ of \blue{$\mca{S}$}. Then  
$$\lambda_{\wt{N\,}\!}-1=p(\lambda_{\wt{M}}-1)+\sum_{w\subset \blue{\mca{S}}}(e_w-1).$$
\end{lem}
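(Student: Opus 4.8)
The plan is to mimic Iwasawa's second proof of Kida's formula, transporting each arithmetic ingredient to its topological analogue via the dictionary established in Sections 2--5. First I would set up the $p$-adic representation-theoretic framework: let $G=\Gal(f)\cong \Z/p\Z$ and consider the various $G$-modules tensored with $\Qp$ arising from the chain complexes of $\wt{N\,}\!$, namely $H_1(\wt{N\,}\!)_{[p]}$, $Z_2(\wt{N\,}\!)$, $Z_1(\wt{N\,}\!)_{\wt{S}}$, and the relevant $S$-modules. The Iwasawa $\lambda$-invariant $\lambda_{\wt{N\,}\!}$ equals the $\Qp$-dimension of $H_1(\wt{N\,}\!)_{[p]}\otimes \Qp$ (using Proposition \ref{injlim}, which identifies the direct limit with $(\Qp/\Zp)^{\lambda}\oplus A'$ and gives $\mu=0\iff A'=0$), and similarly $\lambda_{\wt{M}}$ is the dimension of the $G$-invariant part. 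The key is then to compute the difference $\lambda_{\wt{N\,}\!}-p\lambda_{\wt{M}}$ as an Euler-characteristic-type quantity expressible through Herbrand quotients and Tate cohomology, which Proposition \ref{ZpTate} computes explicitly.

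Next I would run the homological algebra. The first claim --- that $S$ is infinitely inert in $\wt{N\,}\!$ when $\ol{S}$ is infinitely inert in $\wt{M}$, and that $S_n$ is the branch link of $f_n$ --- follows from the Hilbert ramification theory for $\Zp$-covers (Proposition 4.8) together with the compatibility of decomposition/inertia data under the equivariant Galois morphism, using Lemma 5.6 on how meridians transform; this is essentially a diagram chase. For $\mu_{\wt{N\,}\!}=0$: since $\ol{S}$ infinitely inert means in particular finitely decomposed, Theorem \ref{mu} gives $\mu_{\wt{M}}=0\imp\mu_{\wt{N\,}\!}=0$ directly. The main formula then comes from taking the long exact sequences of $G$-modules associated to $0\to Z_2(\wt{N\,}\!)\to C_2(\wt{N\,}\!)\to B_1(\wt{N\,}\!)_{\wt{S}}\to 0$ and the $S$-variants, applying $-\otimes\Qp$ and Herbrand-quotient multiplicativity. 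The term $\sum_{w\subset\mca{S}}(e_w-1)$ enters through $\wh{H}^i(G, Z_1(\wt{N\,}\!)_{\wt{S}})$ and the decomposition $I_{\wt{S}}\cong\bigoplus \Z[G/D_w]$, exactly as $\oplus_\p \wh H^i(Z_\p,\Z)$ does in Proposition 2.8; the branched (non-inert) hypothesis on components of $\ol{S}$ forces each decomposition group to be either trivial or all of $G$ in the controlled way that produces $e_w-1$. The constant $-1$ on each side is the topological shadow of Iwasawa's $\delta$-term and the $(p-1)(\hbar_2-\hbar_1)$ correction: by Proposition \ref{ZpTate}(2), $\hbar_1=1,\hbar_2=0$ unconditionally for $\Q$HS$^3$'s (the analogue of the $\delta=1$ / roots-of-unity case), so $(p-1)(\hbar_2-\hbar_1)=-(p-1)$, which combined with $p\lambda_{\wt{M}}$ rearranges to give $\lambda_{\wt{N\,}\!}-1=p(\lambda_{\wt{M}}-1)+\sum_w(e_w-1)$.

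More precisely, I would establish the intermediate identity
$$\lambda_{\wt{N\,}\!}=p\,\lambda_{\wt{M}}+\sum_{w\subset\mca{S}}(e_w-1)+(p-1)(\hbar_2-\hbar_1),$$
the exact analogue of Iwasawa's Theorem 8.3, by comparing $\Qp$-dimensions of $G$-fixed parts and coinvariants across the exact sequences above, using that $C_2(\wt{N\,}\!)$ and $C_1(\wt{N\,}\!)/C_1(\wt{S})$ are $\Z[G]$-free (so contribute trivial cohomology and controlled Herbrand quotients), and invoking Proposition \ref{ZpTate}(3),(4) to evaluate the $Z_1(\wt{N\,}\!)_{\wt{S}}$ and $Z_2(\wt{N\,}\!)_{\wt{S}}/Z_2(\wt{N\,}\!)$ contributions. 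Then substituting $\hbar_1=1$, $\hbar_2=0$ from Proposition \ref{ZpTate}(2) yields the stated formula after moving the $-(p-1)$ term.

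The hard part will be the bookkeeping in the intermediate identity: correctly tracking which modules are $G$-cohomologically trivial, assembling the Herbrand quotients multiplicatively across the several short exact sequences, and --- most delicately --- verifying that the hypothesis ``$\ol{S}$ infinitely inert in $\wt{M}$'' plus ``no component of $\ol{S}$ inert in $f_0$'' translates into precisely the decomposition-group data that makes $\sum_{w}(e_w-1)$ the right combinatorial contribution (as opposed to a sum also involving inert-but-unbranched components, which is why that extra hypothesis is imposed). I would also need to be careful that the direct-limit ($S$-chain) constructions of Subsection 5.4 are genuinely exact and compatible with the $G$-action, so that the finite-level computations of Section 2 pass to the limit; this is where Lemma \ref{ML} and the exactness of the direct limit functor do the work, but checking the Mittag-Leffler / surjectivity hypotheses for each system is the fiddly technical core. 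Finally, the reduction from general $p$-power degree to degree $p$ (Lemma \ref{deg=p} $\imp$ Theorem \ref{Main}) is handled by the standard dévissage along a composition series of the $p$-group $\Gal(f)$, summing the branch-index contributions multiplicatively as in Kida's original argument.
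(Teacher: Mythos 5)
Your proposal follows essentially the same route as the paper's proof: the reduction of $\mu_{\wt{N\,}\!}=0$ to Theorem \ref{mu}, the identification $\wh{H}^n(G,H_1(\wt{N\,}\!)_{\wt{S}[p]})\cong\wh{H}^n(G,Z_2(\wt{N\,}\!)_{\wt{S}})$, the decomposition of the divisible $\Zp[\Z/p\Z]$-module $(\Qp/\Zp)^{\lambda_{\wt{N\,}\!}}$ into the three indecomposables, the Herbrand-quotient bookkeeping via Proposition \ref{ZpTate}(1),(3),(4) yielding the intermediate identity $\lambda_{\wt{N\,}\!}=p\lambda_{\wt{M}}+\sum_w(e_w-1)+(p-1)(\hbar_2-\hbar_1)$, and the final substitution $\hbar_2-\hbar_1=-1$ from Proposition \ref{ZpTate}(2). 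The only caveat is a minor imprecision ($\lambda_{\wt{N\,}\!}$ is the corank of the divisible group $H_1(\wt{N\,}\!)_{[p]}$, i.e.\ the $\Qp$-dimension of $\Hom(H_1(\wt{N\,}\!)_{[p]},\Qp/\Zp)\otimes\Qp$, not of $H_1(\wt{N\,}\!)_{[p]}\otimes\Qp$ itself), which does not affect the argument.
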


\blue{We give their proofs in the subsequent subsections.}

\begin{rem} 
(1) As an intermediate result, we \blue{prove} 
the following: 
\blue{Let the setting be as in the Lemma above.}  
If we take CW-structures or PL-structures compatible with all the covering maps \blue{and} put $\hbar_i\blue{=}\rank \wh{H}^i(G,Z_2(\wt{N\,}\!))$ \magenta{for $G=\Gal(f)$}, 
then 
$$\lambda_{\wt{N\,}\!}=p\lambda_{\wt{M}}+\sum_{w\subset \blue{\mca{S}}}(e_w-1)+(p-1)(\hbar_2-\hbar_1).$$ 
This formula is a more direct analogue of Theorem 6.3 (a corollary of  Iwasawa \cite{Iwasawa1981}, Theorem 6). 

\noindent (2)  If we fix a $\Z$-basis of $Z_1(\wt{N\,}\!)$ containing \bred{the components of $\mca{S}$} and take the sum over it, then $w$\bred{'s} can be seen as analogues of places. 


\noindent 
\blue{(3)} The original proof by Kida (\cite{Kida1980}) was given by genus theory. 
\magenta{We expect an alternative proof of our formula with use of genus theory for 3-manifolds, which was established in \cite{Ueki3}. 
There are also analytic proofs with use of $p$-adic $L$-functions by Grass and Sinnott (\cite{Gras1978}, \cite{Sinnott1984}). It seems interesting to consider their analogues, examine analogues of $p$-adic $L$-functions, 
and explore an analogue of the Iwasawa main conjecture.}  
\end{rem} 
Here is an example: 
\begin{exa} 
Let $p=3$, let $N=M=S^3$, and let $f_{\red 0}:N\to M$ be a branched cover of degree $p$ branched over an unknot $\ol{S}$. 
Let $L=K\cup K'$ be a \magenta{Hopf} link as in the figure below, so that $\lk(K,\ol{S})=3, \lk(K',\ol{S})=\lk(K,K')=1$.
Then, $\wt{K}:=\red{f_0}^{-1}(K)=K_{1}\sqcup K_{2}\sqcup K_{3}$ is a Borromean ring, $\wt{K'}:=\red{f_0}^{-1}(K')$ is an unknot, and $\lk(K_{i},\wt{K'})=1$ for $i=1,2,3$. 

Since $L$ and $L'=f_{\red 0}^{-1}(L)$ consist of null-homologous components, 
\magenta{the} TLN-$\Zp$-cover\magenta{s} $\wt{M}$ and $\wt{N\,}\!$ are defined. 
Since $\lk(L,\ol{S})=\lk(L', S)=4 \not \equiv 0 \mod 3$, 
$\ol{S}$ and $S:=f_{\red 0}^{-1}(\ol{S})$ are totally inert in $\wt{M}$ and $\wt{N\,}\!$ respectively. 
Since the defining homomorphism $\tau,\tau'$ commute with 
$f_{\red 0*}:\pi_1(N-L'\cup S)\to \pi_1(M-L\cup \ol{S})$ and $\iota={\rm id}_{\Zp}$, a branched cover \magenta{$f:\wt{N\,}\!\to \wt{M}$} of branched $\Zp$-covers of degree $p$  is defined. 

\magenta{By Hosokawa's result (\cite{Hosokawa1958}), Theorem 1), 
the value of Hosokawa polynomials $H_{L}(t)$ and $H_{L'}(t)$ at $t=1$ are
calculated by using the linking numbers of components of $L$ and $L'$,} and satisfy $H_{L}(1)=H_{L'}(1)=\pm1$. 
\magenta{By a result of Kadokami--Mizusawa (\cite{KM2008}, Proposition 4.1), the fact $p \not | \pm1$ implies that ${\wt{M}}$ and $\wt{N\,}\!$ consist of $\Q$HS$^3$'s, and their Iwasawa invariants satisfy } 
$\lambda_{\wt{M}}=2-1=1,$ $\mu_{\wt{M}}=\nu_{\wt{M}}=0$, $\lambda_{\wt{N\,}\!}=4-1=3,$ and $\mu_{\wt{N\,}\!}=\nu_{\wt{N\,}\!}=0$. 
\magenta{Thus we have observed that Kida's formula holds for this case: 
$$3-1=3\cdot (1-1)+(3-1).$$}
\begin{center} \includegraphics[width=13cm]{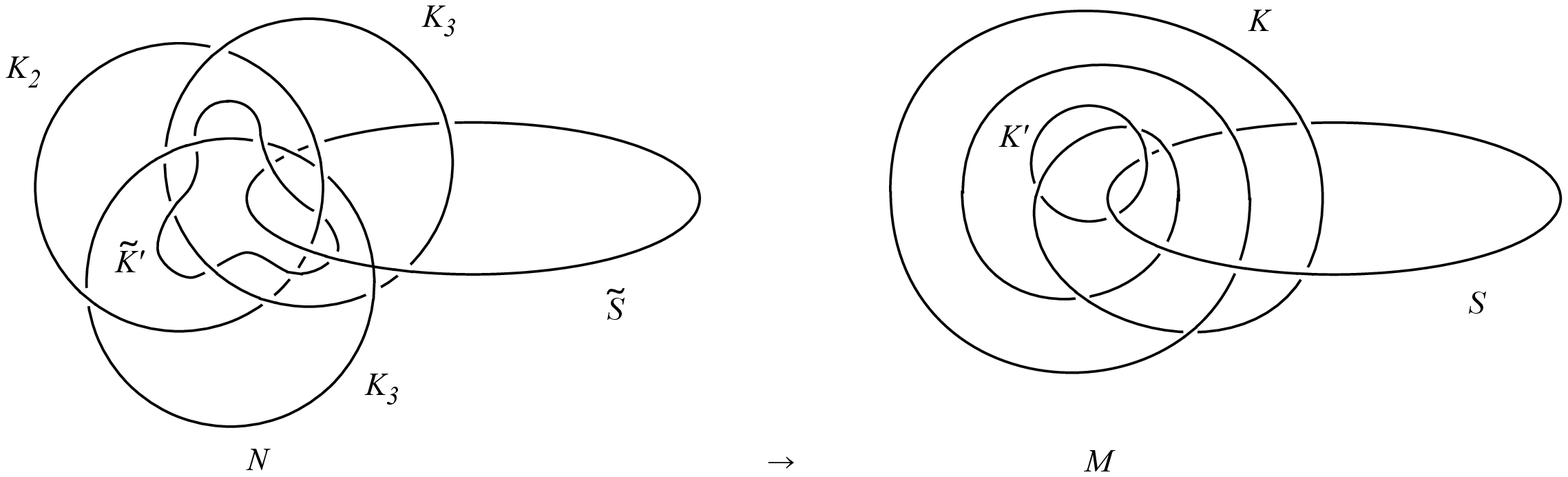}
\end{center} 

\magenta{
Let $p$ be an arbitrary prime number. In the example above, if we replace $p=3$ by $p^r$ for any $r\in \N$ and take a similar $K$ with ${\rm lk}(K,S)=p^r$, then we obtain a $\Zp$-cover $\wt{N\,}\!$ with $\lambda_{\wt{N\,}\!}=p^r$ by Kida's formula (Theorem \ref{Main}). 
Indeed, we can verify $H_{L'}(1)=\pm1$ in a similar way.}  
%
%
%
\end{exa} 
\subsection{Proof for degree $p$ (after Iwasawa)} 
In this subsection, following Iwasawa's second proof in \cite{Iwasawa1981}, we prove our formula for the case of degree $p$ (Lemma 7.2). 
%
\begin{proof}[Lemma \ref{deg=p}]
The assertion on the $\mu$-invariants is done by Theorem \ref{mu}. 
Let $\wt{S}=$ $\(h_n'^{-1}(S)\)_n$ denote the inverse system over $S$ in $\wt{N\,}\!$. 
For $\wt{S}$-chains and others, we use the notation in Subsection 5.4. 

We have $\wh{H}^n(H_1(\wt{N\,}\!)_{\wt{S}[p]})\cong \wh{H}^n(Z_2(\wt{N\,}\!)_\wt{S})$. 
Indeed, consider \blue{the} exact sequences 
$0\to B_1(\wt{N\,}\!)_\wt{S}$ $\to Z_1(\wt{N\,}\!)_\wt{S}\to H_1(\wt{N\,}\!)_\wt{S}\to 0$ and   
$0\to Z_2(\wt{N\,}\!)_\wt{S}\to C_2(\wt{M})\to B_1(\wt{N\,}\!)_\wt{S}\to0$, and note $G\cong \Z/p\Z$. Then \magenta{Proposition} \ref{ZpTate} (1) and (3) show   
$\wh{H}^n(H_1(\wt{N\,}\!)_{\wt{S}[p]})\cong \wh{H}^n(H_1(\wt{N\,}\!)_\wt{S})\congto \wh{H}^{n+1}(B_1(\wt{N\,}\!)_\wt{S}) 
\congto \wh{H}^{n+2}(Z_2(\wt{N\,}\!)_\wt{S}) \cong  \wh{H}^{n}(Z_2(\wt{N\,}\!)_\wt{S})$.

Put $A_\wt{S}:=H_1(\wt{N\,}\!)_{\wt{S}[p]}\blue{=H_1(\wt{N\,}\!)/\iota_*(H_1(\wt{\wt{S}}))}$, $A:=H_1(\wt{N\,}\!)_{[p]}$. 
Since $\mu_{\wt{N\,}\!}=0$, 
Proposition \ref{injlim} gives an isomorphism $A\cong (\Qp/\Zp)^{\lambda_{\wt{N\,}\!}}$. 
\blue{Since $\wt{N\,}$ consists of $\Q$HS$^3$, $S_n$ consists of $\Q$-null-homologous components for each $n$. If $S_n$ is inert in $N_{n+1}\to N_n$, then the restriction of the transfer $H_1(N_n)\to H_1(N_{n+1})$ to the subgroups generated by all the components of $S_n$ and $S_{n+1}$ is surjective, and hence $\iota_*(H_1(\wt{S}))$ is finite. 
Hence there is a surjective homomorphism $A\surj A_\wt{S}$ with finite kernel, and 
we have $A_\wt{S}\cong (\Qp/\Zp)^{\lambda_{\wt{N\,}\!}}$.}  


We consider a linear representation of $G=\Z/p\Z$. 
Put $X_p:=\Zp[G], Z_{p-1}:=(1-\sigma)X_p, X_1:=X_p/X_{p-1}$ $=\Zp, A_i=\Hom_{\Zp}(X_i,\Qp/\Zp)$ for $i=1,p-1,p$. 
Then there are decompositions $A=A_1^{a_1}\oplus A_{p-1}^{a_{p-1}}\oplus A_p^{a_p}$, $a_i\in \N$. 
By using a functor $V:A\mapsto \Hom(A,\Qp/\Zp)$, we put $V_i:=V(A_i), \pi_i:G\to \GL(V_i)$, $i=1,p-1,p$. 
Then $\pi_1$ is the trivial representation, $\pi_{p-1}$ is the unique faithful irreducible representation, and $\pi_p=\pi_1\oplus \pi_{p-1}=\pi_G$. 
Put $V=V(A)$. Then $G\act A$ induces a representation $\pi_{f}:G\to \GL(V)$, 
and there is a decomposition $\pi_{f}=a_1\pi_1\oplus a_{p-1}\pi_{p-1}\oplus a_p\pi_p$. 

Next, we calculate $a_i$'s. Since $G\cong \Z/p\Z$, 
the Tate cohomologies of $G\act A_i$ are 
abelian groups of exponent $p$, and 
their $p$-ranks satisfy $r(\wh{H}^1(G,A_i))=1,0,0, r(\wh{H}^2(G,A_i))=0,1,0$ for $i=1,p-1,p$. 
Put 
$r_n=r(\wh{H}^n(G,A))=r(\wh{H}^n(G,$ $Z_2(\wt{N\,}\!)_\wt{S}))$. 
Then the decomposition of $A_\wt{S}$ yields $r_1=a_1, r_2=a_{p-1}$. 

We calculate the Herbrand quotient of $\wt{S}$-2-cycles. 
(Recall that for a cyclic group $G$ and $G$-module $B$, 
$q(B)=\# \wh{H}^0(B)/\# \wh{H}^1(B)$ is called the Herbrand quotient. 
Herbrand's lemma asserts that in a short exact sequence,  
if $q(\, )$ is defined for two terms, then so for the rest term, and its Tate cohomologies are finite.)  
Now $\wh{H}^n(G,Z_2(\wt{N\,}\!)_\wt{S})$ is finite 
and $q(Z_2(\wt{N\,}\!)_\wt{S})$ is defined. 
Put $\hbar_n=r(\wh{H}^n(G,Z_2(\wt{N\,}\!)))$ 
for $Z_2(\wt{N\,}\!)<Z_2(\wt{N\,}\!)_\wt{S}$. 
Then $q(Z_2(\wt{N\,}\!)_\wt{S}/Z_2(\wt{N\,}\!))=p^s$ by
\magenta{Proposition} \ref{ZpTate} (4), 
and $q(Z_2(\wt{N\,}\!)_\wt{S})$ $=q(Z_2(\wt{N\,}\!))p^s$ by Herbrand's lemma. 
Hence $\hbar_1, \hbar_2$ are finite and $a_{p-1}-a_1=\hbar_2-\hbar_1+s$ holds.  

Since $\mu_{\wt{M}}=0$, there is an isomorphism $\bar{A}_\wt{S}:=H_1(\wt{M})_{\ol{\wt{S}}} \cong (\Qp/\Zp)^{\lambda_{\wt{M}}}$ by Proposition \ref{injlim}. 
\magenta{Proposition} \ref{ZpTate} (1) asserts $\wh{H}^2(C_2(\wt{N\,}\!))=0$, and hence $\wh{H}^1(Z_2(\wt{N\,}\!)_\wt{S})\cong (B_1(\wt{N\,}\!)_\wt{S})^G/B_1(\wt{M})_\wt{S}$, 
$\wh{H}^2(Z_2(\wt{N\,}\!)_\wt{S})\cong \Coker((Z_1(\wt{N\,}\!)_\wt{S})^G \to (H_1(\wt{N\,}\!)_\wt{S})^G)$. 
Since the first term is finite, maps $H_1(\wt{M})_{\wt{\ol{S}}}$, $\to H_1(\wt{N\,}\!)_\wt{S}$ 
and $\bar{A_\wt{S}}\to A_\wt{S}^G$ have finite kernels and cokernels, \bred{where we put $\wt{\ol{S}}=\(h_n^{-1}(\ol{S})\)_n$. }
Hence the decomposition of $A_\wt{S}$ tells $\lambda_{\wt{M}}=a_1+a_p$. 

Thereby we have obtained $\pi_{f}=a_1\pi_1\oplus a_{p-1}\pi_{p-1}\oplus a_p\pi_p
=(a_1+a_p)\pi_p \oplus (a_{p-1}-a_1)\pi_{p-1}
=\lambda_K \pi_G \oplus \blue{s}\pi_{p-1}\oplus (\hbar_2-\hbar_1)\pi_{p-1}$. 
(If $\hbar_2-\hbar_1<0$, the right hand side of the formula should be interpreted as a difference of two representations.) 
Comparing the degrees of both side, we obtain 
$$\lambda_{\wt{N\,}\!}=p\lambda_{\wt{M}}+\sum_{w\subset \bred{\mca{S}}}(e_w-1)+(p-1)(\hbar_2-\hbar_1).$$ 

By \magenta{Proposition} \ref{ZpTate} (2), \blue{we have} $\hbar_2-\hbar_1=-1$, \magenta{and obtain the desired formula.}
\end{proof}

\subsection{Proof for degree $p$-power (after Kida)} 
Following Kida's argument in \cite{Kida1980}, we deduce our formula (Theorem 7.1) from the case of degree $p$ (Lemma 7.2). 
\begin{proof}[Theorem \ref{Main}] 
Since every nontrivial finite $p$-group has nontrivial center, 
a Galois branched cover of $p$-power degree can be realized 
as a sequence of branched covers of degree $p$. 
Suppose $\deg(f)=p^n$. 
We prove the formula by induction on $m$. 
The assertion is trivial for $n=0$. 

Assume the assertion is true for $n\leq m$. Then for $n=m+1$, 
there is a subcover $\wt{N\,}\!\to \wt{H\,}\!$ of $\wt{N\,}\!\to \wt{M}$ of degree $p$. 
Let $w$ run through all the components of \blue{$\wt{S}$}, and $u$ run through the image of \blue{$\wt{S}$} in $\wt{H\,}\!$. By the hypothesis of the induction and by Lemma 7.2, they satisfy
$\lambda_{\wt{N\,}\!}-1=p(\lambda_{\wt{H\,}\!} -1)+\sum_w (e(w/u)-1)$ and  
$\lambda_{\wt{H\,}\!}-1=p^m(\lambda_{\wt{M}} -1)+\sum_u (e(u/v)-1)$. 
Here $u$ and $v$ denote the images of each $w$ in $\wt{H\,}\!$ and $\wt{M}$ respectively, $e(w/u)$ and $e(u/v)$ denote their branch indices in $\wt{N\,}\!\to \wt{H\,}\!$ and $\wt{H\,}\!\to \wt{M}$ respectively, and $e(w/v)=e_w$. 

Now we have 
$\lambda_{\wt{N\,}\!} -1=p(p^m(\lambda_{\wt{M}} -1)+\sum_u (e(u/v)-1))+\sum_w (e(w/u)-1)
=p^{m+1}(\lambda_{\wt{M}} -1)+\sum_u p(e(u/v)-1) +\sum_w (e(w/u)-1)$. 
%
If $u$ is branched in ${\wt{N\,}\!}\to {\wt{H\,}\!}$, 
then the unique $w$ over $u$ satisfies $e(w/v)=pe(u/v)$, $e(w/u)=p$, and hence 
$p(e(u/v)-1) + (e(w/u)-1)= e(w/v)-p+p-1=e(w/v)-1$. 
%
If $u$ is decomposed in ${\wt{N\,}\!}\to {\wt{H\,}\!}$, then 
the correction $w_1,...,w_p$ of components over $u$ satisfy $e(w_i/v)=e(u/v)$, $e(w_i/u)=1$, and hence 
$p(e(u/v)-1) +\sum_i (e(w_i/u)-1)=\sum_i (e(w_i/v)-1)$. 

Thus, due to the assumption that $S$ is not inert in $f_{\red 0}:N\to M$, we have generalized the formula to the case of $p$-power degree. 

(If $u$ is inert in ${\wt{N\,}\!}\to{\wt{H\,}\!}$, then $w$ over $u$ satisfies 
$e(w/v)=e(u/v)$, $e(w/u)=1$ and hence $p(e(u/v)-1)+ (e(w/u)-1)=p e(w/u)-p+1-1=p(e(w/u)-1)$.)
\end{proof}

\section*{Acknowledgments} 
The author would like to appreciate Yasushi Mizusawa for informing a paper on Kida's formula \magenta{(\cite{FKOT1997})}, 
Teruhisa Kadokami for discussion on significance of the notion of a branched $\Zp$-cover,  
Mikio Furuta for instructive advise from topological point of view, 
Masanori Morishita for strong \magenta{encouragement} and pointing out errors, 
and the anonymous referees for accurate comments. 
He is also very grateful to \blue{Ryo Furukawa}, Yu-ich Hirano, Tomoki Mihara, \magenta{Takayuki Morisawa, Hirofumi Niibo, Takayuki Okuda}, and Tatsuro Shimizu for fruitful discussions. 
The author is partially supported by Grant-in-Aid for JSPS Fellows (25-2241).

\bibliographystyle{amsalpha}
\bibliography{ju.Kida.ams}

\providecommand{\bysame}{\leavevmode\hbox to3em{\hrulefill}\thinspace}
\providecommand{\MR}{\relax\ifhmode\unskip\space\fi MR }
\providecommand{\MRhref}[2]{%
  \href{http://www.ams.org/mathscinet-getitem?mr=#1}{#2}
}
\providecommand{\href}[2]{#2}
\begin{thebibliography}{FKOT97}

\bibitem[Bro94]{Brown}
Kenneth~S. Brown, \emph{Cohomology of groups}, Graduate Texts in Mathematics,
  vol.~87, Springer-Verlag, New York, 1994, Corrected reprint of the 1982
  original. \MR{1324339 (96a:20072)}

\bibitem[Bru66]{Brumer1966}
Armand Brumer, \emph{Galois groups of extensions of algebraic number fields
  with given ramification}, Michigan Math. J. \textbf{13} (1966), 33--40.
  \MR{0191894 (33 \#121)}

\bibitem[BV13]{BV2013}
Nicolas Bergeron and Akshay Venkatesh, \emph{The asymptotic growth of torsion
  homology for arithmetic groups}, J. Inst. Math. Jussieu \textbf{12} (2013),
  no.~2, 391--447. \MR{3028790}

\bibitem[CS78]{CS1977}
R.~C. Cowsik and G.~A. Swarup, \emph{A remark on infinite cyclic covers}, J.
  Pure Appl. Algebra \textbf{11} (1977/78), no.~1-3, 131--138. \MR{487100
  (81e:57001)}

\bibitem[CWH34]{CWH1934}
C.~Chevalley, A.~Weil, and E.~Hecke, \emph{\"{U}ber das verhalten der integrale
  1. gattung bei automorphismen des funktionenk\"orpers}, Abh. Math. Sem. Univ.
  Hamburg \textbf{10} (1934), no.~1, 358--361. \MR{3069638}

\bibitem[Del86]{Dellomo1986}
Michael Dellomo, \emph{On the inverse limit of the branched cyclic covers
  associated with a knot}, J. Pure Appl. Algebra \textbf{40} (1986), no.~1,
  15--26. \MR{825178 (87h:57008)}

\bibitem[Del88]{Dellomo1988}
\bysame, \emph{The inverse limit of the fundamental groups of branched cyclic
  coverings}, Proc. Amer. Math. Soc. \textbf{104} (1988), no.~1, 321--326.
  \MR{958092 (89k:57010)}

\bibitem[FKOT97]{FKOT1997}
Takashi Fukuda, Keiichi Komatsu, Manabu Ozaki, and Hisao Taya, \emph{On
  {I}wasawa {$\lambda_p$}-invariants of relative real cyclic extensions of
  degree {$p$}}, Tokyo J. Math. \textbf{20} (1997), no.~2, 475--480.
  \MR{1489480 (98k:11153)}

\bibitem[Fox57]{Fox1957}
Ralph~H. Fox, \emph{Covering spaces with singularities}, A symposium in honor
  of {S}. {L}efschetz, Princeton University Press, Princeton, N.J., 1957,
  pp.~243--257. \MR{0123298 (23 \#A626)}

\bibitem[Gra79]{Gras1978}
Georges Gras, \emph{Sur les invariants ``lambda'' d'{I}wasawa des corps
  ab\'eliens}, Number theory, 1978--1979, Publ. Math. Fac. Sci. Besan\c con,
  Univ. Franche-Comt\'e, Besan\c con, 1979, pp.~Exp. No. 5, 37. \MR{747996
  (85i:11093)}

\bibitem[HMM06]{HMM2006}
Jonathan Hillman, Daniel Matei, and Masanori Morishita, \emph{Pro-{$p$} link
  groups and {$p$}-homology groups}, Primes and knots, Contemp. Math., vol.
  416, Amer. Math. Soc., Providence, RI, 2006, pp.~121--136. \MR{2276139
  (2008m:57010)}

\bibitem[Hos58]{Hosokawa1958}
Fujitsugu Hosokawa, \emph{On {$\nabla $}-polynomials of links}, Osaka Math. J.
  \textbf{10} (1958), 273--282. \MR{0102820 (21 \#1606)}

\bibitem[Iwa56]{Iwasawa1956u}
Kenkichi Iwasawa, \emph{A note on the group of units of an algebraic number
  field}, J. Math. Pures Appl. (9) \textbf{35} (1956), 189--192. \MR{0076803
  (17,946h)}

\bibitem[Iwa59]{Iwasawa1959}
\bysame, \emph{On {$\Gamma $}-extensions of algebraic number fields}, Bull.
  Amer. Math. Soc. \textbf{65} (1959), 183--226. \MR{0124316 (23 \#A1630)}

\bibitem[Iwa73a]{Iwasawa1973Zl}
\bysame, \emph{On {${\bf Z}_{l}$}-extensions of algebraic number fields}, Ann.
  of Math. (2) \textbf{98} (1973), 246--326. \MR{0349627 (50 \#2120)}

\bibitem[Iwa73b]{Iwasawa1973}
\bysame, \emph{On the {$\mu $}-invariants of {$Z_{\ell}$}-extensions}, Number
  theory, algebraic geometry and commutative algebra, in honor of {Y}asuo
  {A}kizuki, Kinokuniya, Tokyo, 1973, pp.~1--11. \MR{0357371 (50 \#9839)}

\bibitem[Iwa81]{Iwasawa1981}
\bysame, \emph{Riemann-{H}urwitz formula and {$p$}-adic {G}alois
  representations for number fields}, T\^ohoku Math. J. (2) \textbf{33} (1981),
  no.~2, 263--288. \MR{624610 (83b:12003)}

\bibitem[Jan88]{Jannsen1988}
Uwe Jannsen, \emph{Continuous \'etale cohomology}, Math. Ann. \textbf{280}
  (1988), no.~2, 207--245. \MR{929536}

\bibitem[Kap95]{Kapranov1995}
M.~M. Kapranov, \emph{Analogies between the {L}anglands correspondence and
  topological quantum field theory}, Functional analysis on the eve of the 21st
  century, {V}ol.\ 1 ({N}ew {B}runswick, {NJ}, 1993), Progr. Math., vol. 131,
  Birkh\"auser Boston, Boston, MA, 1995, pp.~119--151. \MR{1373001 (97c:11069)}

\bibitem[Kid80]{Kida1980}
Y{\^u}ji Kida, \emph{{$l$}-extensions of {CM}-fields and cyclotomic
  invariants}, J. Number Theory \textbf{12} (1980), no.~4, 519--528. \MR{599821
  (82c:12006)}

\bibitem[KM08]{KM2008}
Teruhisa Kadokami and Yasushi Mizusawa, \emph{Iwasawa type formula for covers
  of a link in a rational homology sphere}, J. Knot Theory Ramifications
  \textbf{17} (2008), no.~10, 1199--1221. \MR{2460171 (2009g:57023)}

\bibitem[KM13]{KM2013}
\bysame, \emph{On the {I}wasawa invariants of a link in the 3-sphere}, Kyushu
  J. Math. \textbf{67} (2013), no.~1, 215--226. \MR{3089003}

\bibitem[Le14]{Le2014}
Thang Le, \emph{Homology torsion growth and {M}ahler measure}, Comment. Math.
  Helv. \textbf{89} (2014), no.~3, 719--757. \MR{3260847}

\bibitem[Maz64]{Mazur1963}
Barry Mazur, \emph{Remarks on the {A}lexander polynomial}, unpublished note,
  http:{\slash}{\slash}www.math.harvard.edu{\slash}\textasciitilde
  mazur{\slash}papers{\slash}alexander\_polynomial.pdf, 1963--64.

\bibitem[MM82]{MM1982}
John~P. Mayberry and Kunio Murasugi, \emph{Torsion-groups of abelian coverings
  of links}, Trans. Amer. Math. Soc. \textbf{271} (1982), no.~1, 143--173.
  \MR{648083 (84d:57004)}

\bibitem[Mor08]{Morin2008}
Baptiste Morin, \emph{Utilisation d'une cohomologie \'etale \'equivariante en
  topologie arithm\'etique}, Compos. Math. \textbf{144} (2008), no.~1, 32--60.
  \MR{2388555 (2009g:14020)}

\bibitem[Mor10]{Morishita2010}
Masanori Morishita, \emph{Analogies between knots and primes, 3-manifolds and
  number rings [translation of mr2208305]}, Sugaku Expositions \textbf{23}
  (2010), no.~1, 1--30, Sugaku expositions. \MR{2605747}

\bibitem[Mor12]{Morishita2012}
\bysame, \emph{Knots and primes}, Universitext, Springer, London, 2012, An
  introduction to arithmetic topology. \MR{2905431}

\bibitem[NSW08]{NSW}
J{\"u}rgen Neukirch, Alexander Schmidt, and Kay Wingberg, \emph{Cohomology of
  number fields}, second ed., Grundlehren der Mathematischen Wissenschaften
  [Fundamental Principles of Mathematical Sciences], vol. 323, Springer-Verlag,
  Berlin, 2008. \MR{2392026 (2008m:11223)}

\bibitem[Por04]{Porti2004}
Joan Porti, \emph{Mayberry-{M}urasugi's formula for links in homology
  3-spheres}, Proc. Amer. Math. Soc. \textbf{132} (2004), no.~11, 3423--3431
  (electronic). \MR{2073320 (2005e:57004)}

\bibitem[Rez97]{Reznikov1997}
Alexander Reznikov, \emph{Three-manifolds class field theory (homology of
  coverings for a nonvirtually {$b_1$}-positive manifold)}, Selecta Math.
  (N.S.) \textbf{3} (1997), no.~3, 361--399. \MR{1481134 (99b:57041)}

\bibitem[Rez00]{Reznikov2000}
\bysame, \emph{Embedded incompressible surfaces and homology of ramified
  coverings of three-manifolds}, Selecta Math. (N.S.) \textbf{6} (2000), no.~1,
  1--39. \MR{1771215 (2001g:57035)}

\bibitem[Sak81]{Sakuma1981}
Makoto Sakuma, \emph{On the polynomials of periodic links}, Math. Ann.
  \textbf{257} (1981), no.~4, 487--494. \MR{639581 (83i:57003)}

\bibitem[Sik03]{Sikora2003}
Adam~S. Sikora, \emph{Analogies between group actions on 3-manifolds and number
  fields}, Comment. Math. Helv. \textbf{78} (2003), no.~4, 832--844.
  \MR{2016698 (2004i:57023)}

\bibitem[Sin84]{Sinnott1984}
Warren~M. Sinnott, \emph{On {$p$}-adic {$L$}-functions and the
  {R}iemann-{H}urwitz genus formula}, Compositio Math. \textbf{53} (1984),
  no.~1, 3--17. \MR{762305 (86e:11103)}

\bibitem[SW02]{SW2002M}
Daniel~S. Silver and Susan~G. Williams, \emph{Mahler measure, links and
  homology growth}, Topology \textbf{41} (2002), no.~5, 979--991. \MR{1923995
  (2003h:57011)}

\bibitem[Uek]{Ueki4}
Jun Ueki, \emph{$p$-adic {M}ahler measure and $\mathbb{Z}$-covers of links}, in
  preparation.

\bibitem[Uek14]{Ueki1}
\bysame, \emph{On the homology of branched coverings of 3-manifolds}, Nagoya
  Math. J. \textbf{213} (2014), 21--39.

\bibitem[Uek16]{Ueki3}
\bysame, \emph{On the {I}wasawa {$\mu$}-invariants of branched
  {$\Bbb{Z}_p$}-covers}, Proc. Japan Acad. Ser. A Math. Sci. \textbf{92}
  (2016), no.~6, 67--72.

\bibitem[Was97]{Washington}
Lawrence~C. Washington, \emph{Introduction to cyclotomic fields}, second ed.,
  Graduate Texts in Mathematics, vol.~83, Springer-Verlag, New York, 1997.
  \MR{1421575 (97h:11130)}

\bibitem[Yok67]{Yokoi1967}
Hideo Yokoi, \emph{On the class number of a relatively cyclic number field},
  Nagoya Math. J. \textbf{29} (1967), 31--44. \MR{0207681 (34 \#7496)}

\end{thebibliography}

\ \\ 
Jun Ueki \\
Graduate School of Mathematical Sciences, The University of Tokyo\\
3-8-1, Komaba, Meguro-ku, Tokyo, 153-8914, Japan\\ 
E-mail: \url{uekijun46@gmail.com} 

\end{document}